\def\tab(#1){\mbox{\small$\young(#1)$}\,}
\numberwithin{equation}{section}
\newtheorem{Defn}[equation]{Definition}
\newtheorem*{THEOREM}{Main Theorem}
\newtheorem{Theorem}[equation]{Theorem}
\newtheorem{Prop}[equation]{Proposition}
\newtheorem{Lemma}[equation]{Lemma}
\newtheorem{Cor}[equation]{Corollary}
\theoremstyle{definition}
\newtheorem{Remark}[equation]{Remark}
\def\map#1#2{\,{:}\,#1\!\longrightarrow\!#2}
\def\pmod#1{\text{ }(\text{mod } #1)\,}
  \gdef\set#1{\mathinner{\lbrace\,{\mathcode`\|"8000%
                                   \let|\midvert #1}\,\rbrace}}
\def\midvert{\egroup\mid\bgroup}
\def\Sum{\displaystyle\sum}
\def\({\Big(}
\def\){\Big)}
\let\Gdom=\blacktriangleright
\DeclareMathOperator\Gedom{\,{\underline{\kern-.1ex{\blacktriangleright}\kern-0.1ex}}\,}
\let\gdom=\triangleright
\let\ldom=\triangleleft
\let\gedom=\trianglerighteq
\let\Sect=\S
\def\Bmu{\mathcal{B}_{\bmu,i}}
\def\N{\mathbb N}
\def\Z{\mathbb Z}
\def\charge{{\boldsymbol{\kappa}}}
\def\K{\mathcal K}
\def\O{\mathcal O}
\def\Sym{\mathfrak S}
\def\HH{\mathscr{H}}
\renewcommand\H[1][n]{\HH_{#1}}
\newcommand\HSym[1][n]{\HH_\xi (\Sym_n)}
\newcommand\HL[1][n]{\HH^\Lambda_{#1}}
\newcommand\HK[1][n]{\HH^\K_{#1}}
\newcommand\Hk[1][n]{\HH^K_{#1}}
\newcommand\HO[1][n]{\HH^\O_{#1}}
\newcommand\R[1][n]{\mathscr{R}^\Lambda_{#1}}
\def\Hmu{\HH_\bmu}
\def\bQ{\mathbf{Q}}
\def\bi{\mathbf{i}}
\def\bj{\mathbf{j}}
\def\imu{\bi^\bmu}
\def\ilmu{\bi_\bmu}
\def\S{\mathsf S}
\def\a{\mathfrak a}
\def\b{\mathfrak b}
\def\c{\mathfrak c}
\def\d{\mathfrak d}
\def\s{\mathfrak s}
\def\t{\mathfrak t}
\def\u{\mathfrak u}
\def\v{\mathfrak v}
\def\tlam{\t^\blam}
\def\bump{{\bmu'}}
\def\tnu{\t^\bnu}
\def\teta{\t^\bEta}
\def\tmu{\t^\bmu}
\def\tlmu{\t_\bmu}
\def\tmup{\t^\bump}
\def\tlmup{\t_\bump}
\def\zbmui{z_{\bmu\uparrow i}}
\def\zbmu{z_\bmu}
\def\balpha{{\boldsymbol{\alpha}}}
\def\blam{{\boldsymbol\lambda}}
\def\brho{{\boldsymbol\rho}}
\def\bEta{{\boldsymbol\eta}}
\def\bmu{{\boldsymbol\mu}}
\def\bnu{{\boldsymbol\nu}}
\def\bomega{{\boldsymbol{\omega}}}
\newcommand\talpmu[1][k]{\t^{\balpha_{#1}}_\bmu}
\def\RStd(#1){\mathop{\rm RStd}(#1)}
\def\Std(#1){\mathop{\rm Std}(#1)}
\def\SStd(#1){\mathop{\rm Std}\nolimits^2(#1)}
\newcommand{\Parts}[1][n]{\mathscr{P}_{#1}}
\DeclareMathAlphabet{\mathpzc}{OT1}{pzc}{m}{it}
\def\Mod{\textbf{Mod-}}
\DeclareMathOperator{\Ind}{Ind}
\DeclareMathOperator{\Shape}{Shape}
\DeclareMathOperator{\comp}{comp}
\DeclareMathOperator{\cont}{cont}
\DeclareMathOperator{\defect}{def}
\DeclareMathOperator{\iInd}{\text{$i$}-Ind}
\DeclareMathOperator{\Hom}{Hom}
\DeclareMathOperator{\res}{res}
\DeclareMathOperator{\codeg}{codeg}
\begin{document}
\bibliographystyle{andrew}
\title{Graded induction for Specht modules}
\subjclass[2000]{20C08, 20C30, 05E10}
\keywords{Cyclotomic Hecke algebras, Khovanov--Lauda--Rouquier algebras, cellular algebras}

\author{Jun Hu}
\address{School of Mathematics and Statistics F07,
University of Sydney, NSW 2006, Australia}
\email{junhu303@yahoo.com.cn}
\author{Andrew Mathas}
\address{School of Mathematics and Statistics F07,
University of Sydney, NSW 2006, Australia}
\email{a.mathas@usyd.edu.au}

\thanks{This research was supported, in part, by the Australian
Research Council}

\begin{abstract}
  Recently Brundan, Kleshchev and Wang introduced a $\Z$-grading on the Specht
modules of the degenerate and non-degenerate cyclotomic Hecke algebras of type
$G(\ell,1,n)$. In this paper we show that induced Specht modules have an explicit
filtration by shifts of graded Specht modules. This
proves a conjecture of Brundan, Kleshchev and Wang.
\end{abstract}

\maketitle

\section{Introduction}
In a remarkable series of papers Brundan and Kleshchev (and
Wang)~\cite{BK:GradedKL,BKW:GradedSpecht,BK:GradedDecomp} have shown that the
cyclotomic Hecke algebras of type $G(\ell,1,n)$ are $\Z$-graded algebras and they
have proved a graded analogue of Ariki's categorification theorem~\cite{Ariki:can}.
This work builds upon the work of Khovanov and Lauda~\cite[\Sect3.4]{KhovLaud:diagI} and
Rouquier~\cite{Rouq:2KM}'s introduction of the quiver Hecke algebras which are
certain graded algebras which categorify the negative part of quantum group of an
arbitrary Kac-Moody Lie algebra.

The representation theory of the cyclotomic Hecke algebras of type $G(\ell,1,n)$ is very well
developed with the Specht modules introduced in \cite{DJM:cyc,AMR} playing a central role. For each
multipartition $\bmu$ Brundan, Kleshchev and Wang~\cite{BKW:GradedSpecht} introduced a $\Z$-grading on
each Specht module $S^\bmu$ which is isomorphic to the (ungraded) Specht module upon forgetting the
grading.  In~\cite[Theorem~4.11]{BKW:GradedSpecht} they showed that restriction of the graded Specht
module has a graded Specht filtration. By Frobenius reciprocity in the Grothendieck group there is an
analogous formula for the induced graded Specht modules. Brundan, Kleshchev and Wang conjectured
that this should correspond to a filtration of the induced Specht module by shifts of graded Specht
modules. In this paper we prove this conjecture.

To state our main results, fix an integral domain $R$ and an integer $e\in\{0,2,3,4,\dots\}$ such that
either $e=0$ or $e$ is invertible in $R$ whenever $e$ is not prime, let $\HL$ be the graded cyclotomic
quiver Hecke algebra (over $R$) determined by $e$ and the dominant weight~$\Lambda$ (see
Definition~\ref{quiver Hecke relations}). The algebras $\HL$ include the cyclotomic Hecke algebras
of type $G(\ell,1,n)$ as special cases.  When $R$ is a field there is a natural graded embedding
$\HL\hookrightarrow\HL[n+1]$ which makes $\HL[n+1]$ into a free $\HL$-module by the main theorem of
\cite{HM:GradedCellular}. There is an induction functor
$$\Ind\map{\Mod\HL}\Mod\HL[n+1]; M\mapsto M\otimes_{\HL}\HL[n+1].$$
By projecting onto the blocks of $\HL[n+1]$ the induction functor decomposes as
$$\Ind = \bigoplus_{i\in I} \iInd,$$
where $I=\Z/e\Z$.

Fix $i\in I$ and let $\bmu$ be a multipartition of~$n$. Let
$\balpha_1,\dots,\balpha_z$ be the multipartitions of $n+1$ obtained
by adding an addable $i$-node to $\bmu$ (see before
Definition~\ref{gamma}), ordered so that
$\balpha_1\gdom\dots\gdom\balpha_z$, where $\gedom$ is the dominance
order on multipartitions (see \Sect2.1). Given an addable node~$A$
of $\bmu$ then we define the integer $d_A(\bmu)$ in
Definition~\ref{gamma} below. Finally, if $M$ is a graded
$\HL$-module and $d\in\Z$ then $M\<d\>$ is the graded $\HL$-module
obtained by shifting the grading on~$M$ by~$d$; see Section~4.

\begin{THEOREM}
  Suppose that $R$ is an integral domain and that either $e=0$ or $e$ is invertible in $R$
  whenever $e$ is not prime. Let $\bmu\in\Parts$ and $i\in I$. Then the induced graded Specht module
  $\iInd S^\bmu$ has a graded Specht filtration. That is, there exists a filtration
  $$0=I_0\subset I_1\subset\dots\subset I_z=\iInd S^\bmu,$$
  such that $I_j/I_{j-1}\cong S^{\balpha_j}\<d_{A_j}(\bmu)\>$.
\end{THEOREM}

Our arguments easily extend to the analogous result for the induced graded dual
Specht modules~$S_\bmu$; see Corollary~\ref{dual Spechts}.

When $e=0$ and $\ell=2$ this result can be deduced from \cite[Lemma~3.4]{BrundanStroppel:KhovanovIII}.
In the ungraded setting, when $\HL$ can be identified with the cyclotomic Hecke algebras of type
$G(\ell,1,n)$, this result was established by the second author~\cite{M:SpechtInduction}. The main
observation in~\cite{M:SpechtInduction} is that the filtration of the induced Specht module is the
restriction of a Specht filtration of a closely related family of `induced' modules $M(\bmu)$. In
general, we do not know how to construct graded lifts of the modules $M(\bmu)$ so we cannot use this
approach here.

To prove our Main Theorem we instead use a beautiful construction of
Ryom-Hansen which gives an explicit filtration of the induced Specht
modules for the Hecke algebras of the symmetric
group~\cite{RyomHansen:GradedTranslation} --- in the ungraded
setting Ryom-Hansen gave the first proof of our Main Theorem for the Hecke
algebras of the symmetric groups. In order to adapt Ryom-Hansen's
construction to the graded setting we use our recent construction of a
homogeneous cellular basis for $\HL$ and our realization, up to shift, of
the graded Specht modules as graded submodules of $\HL$; see~\cite{HM:GradedCellular}.

The outline of this paper is as follows. In the Chapter~2 we recall
the results that we need from the ungraded representation theory of
the Hecke algebras of type $G(\ell,1,n)$ and use this to establish a
strong result (Theorem~\ref{strong}), about the transition matrices
between the standard and seminormal bases of these algebras in the
semisimple case. In Chapter~3 we prove an analogous result
(Theorem~\ref{psi mn}), for the transition matrices between the
standard and homogeneous bases of the graded algebras. One
consequence of these results is a necessary condition for the
products $m_{\s\t}n_{\u\v}$ and $\psi_{\s\t}\psi'_{\u\v}$ to be
non-zero, where $\{m_{\s\t}\}$ and $\{n_{\u\v}\}$ are the standard
bases corresponding to the trivial and sign representations of $\H$
and $\{\psi_{\s\t}\}$ and $\{\psi'_{\u\v}\}$ are homogeneous analogues
of these bases. In Chapter~4 we use this non-vanishing condition,
together with the ideas of
Ryom-Hansen~\cite{RyomHansen:GradedTranslation}, to prove our Main
Theorem.

\section{Cyclotomic Hecke algebras and strong dominance} The aim of this paper is to understand the
effect of the graded induction functors on the Specht modules. To do this we first need to prove a
strong result about certain structure constants in the ungraded case.

\subsection{Cyclotomic Hecke algebras}
We start with the definition of the cyclotomic Hecke algebras of type
$G(\ell,1,n)$.

Fix an integral domain $R$ and an integer $\ell\ge1$. Define
$\delta_{\xi1}=1$ if $\xi=1$ and $\delta_{\xi1}=0$ otherwise.

\begin{Defn}\label{hecke}
    Suppose that $\xi \in R$ is invertible and that
    $\bQ=(Q_1,\dots,Q_\ell)\in R^\ell$. The \textbf{cyclotomic Hecke
    algebra} $\HH_n(\xi ,\bQ)=\HH_n^R(\xi ,\bQ)$ of type $G(\ell,1,n)$ and
    with parameters $\xi $ and $\bQ$ is the unital associative $R$-algebra
    with generators $L_1,\dots,L_n,T_1,\dots,T_{n-1}$ and relations
    \begin{align*}
    (L_1-Q_1)\dots(L_1-Q_\ell)&=0,
    & L_rL_t&=L_tL_r,\\
     (T_r+1)(T_r-\xi )&=0,
    &T_rL_r+\delta_{\xi 1}&=L_{r+1}(T_r-\xi +1),\\
    T_sT_{s+1}T_s&=T_{s+1}T_sT_{s+1},\\
    T_rL_t&=L_tT_r,&\text{if }t\ne r,r+1,\\
    T_rT_s&=T_sT_r,&\text{if }|r-s|>1,
  \end{align*}
  where $1\le r<n$, $1\le s<n-1$ and $1\le t\le n$.
\end{Defn}

This definition, which we used in~\cite{HM:GradedCellular}, allows us to simultaneously treat the
degenerate and non-degenerate cyclotomic Hecke algebras of type $G(\ell,1,n)$. By the Morita
equivalence reductions of \cite[Theorem~1.1]{DM:Morita} and \cite[Theorem~5.19]{BK:HigherSchurWeyl} it
is enough to consider the cases where the parameters $\bQ$ are \textbf{integral} in the sense that each
$Q_s$ is an integral power of $\xi $, if $\xi \ne1$, and $\bQ\in\Z^{\ell}$ if $\xi=1$.

Define the \textbf{quantum characteristic} of $\xi $ to be the smallest
positive integer $e$ such that $1+\xi +\dots+\xi ^{e-1}=0$; or $0$ if no such positive integer exists.
Then $e\in\{0,2,3,4,\dots\}$. We fix a
\textbf{multicharge} $\charge=(\kappa_1,\dots,\kappa_\ell)\in\Z^\ell$ such
\begin{enumerate}
\item if $e\ne0$ then $\kappa_l-\kappa_{l+1}\ge n$ for $1\le l<\ell$.
\item if $\xi =1$ then $Q_l\equiv\kappa_l\pmod e$, for $1\le l\le\ell$.
\item if $\xi \ne1$ then $Q_l=\xi ^{\kappa_l}$, for $1\le l\le\ell$.
\end{enumerate}
This choice of multicharge plays a role in what follows only in helping us make a good choice of
modular system as in \cite[\Sect4.2]{HM:GradedCellular}. The multicharge~$\charge$ determines the
parameters $\bQ$. Moreover, for a fixed choice of multicharge, $\H$ depends only on the
quantum characteristic $e$ of~$\xi$, and not on the choice of~$\xi$ itself, by
\cite[Theorem~6.1]{BK:GradedKL}. Therefore, we write $\H=\H(e,\charge)$.

Let $\Sym_n$ be the symmetric group on $\{1,2,\dots,n\}$. Then $\Sym_n$ is
a Coxeter group and $\{s_1,\dots,s_{n-1}\}$ is its standard set of
Coxeter generators, where $s_i=(i,i+1)$ for $1\le i<n$. Let
$\ell\map{\Sym_n}\N$ be the \textbf{length function} on $\Sym_n$ so that
$\ell(w)=k$ if $k$ is minimal such that $w=s_{i_1}\dots s_{i_k}$, for some
$s_{i_j}$ with $1\le i_j<n$. If $w=s_{i_1}\dots s_{i_k}$, with $k=\ell(w)$, then set
$T_w=T_{i_1}\dots T_{i_k}$. Then $T_w$ depends only on $w$, and not on the
choice of \textbf{reduced expression} $w=s_{i_1}\dots s_{i_k}$, because the braid
relations hold in $\H$; see, for example, \cite[Theorem~1.8]{M:Ulect}.

Let $\HSym$ be the $R$-submodule of $\H$ spanned by $\set{T_w|w\in\Sym_n}$.
Then $\HSym$ is isomorphic to the Iwahori-Hecke algebra of $\Sym_n$ with
parameter $\xi$ by \cite[Cor.~3.11]{AK}.

In order to define the bases of $\H$ which underpin this paper we now
review the combinatorics of multipartitions and tableaux.  Recall that a
\textbf{multicomposition} of $n$ is an $\ell$-tuple
$\bmu=(\mu^{(1)},\dots,\mu^{(\ell)})$ of compositions such that
$|\mu^{(1)}|+\dots+|\mu^{(\ell)}|=n$. For each multicomposition let
$\Sym_\bmu=\Sym_{\mu^{(1)}}\times\dots\times\Sym_{\mu^{(\ell)}}$ be the
corresponding \textbf{parabolic}, or Young subgroup, of $\Sym_n$ where we
use the natural embedding $\Sym_\bmu\hookrightarrow\Sym_n$.

A \textbf{multipartition} of $n$ is a multicomposition
$\bmu=(\mu^{(1)},\dots,\mu^{(\ell)})$ of~$n$ such that each component $\mu^{(l)}$ is
a partition, for $1\le l\le\ell$. The set of multicompositions of~$n$ becomes a poset
under the \textbf{dominance order} $\gedom$ where ,if~$\blam$ and~$\bmu$ are multicompositions
of~$n$, then $\blam\gedom\bmu$ if
$$\sum_{k=1}^{l-1}|\lambda^{(k)}|+\sum_{j=1}^i\lambda^{(l)}_j
     \ge\sum_{k=1}^{l-1}|\mu^{(k)}|+\sum_{j=1}^i\mu^{(l)}_j,$$
for $1\le l\le\ell$ and $i\ge1$. If $\blam\gedom\bmu$ and $\blam\ne\bmu$
then we write $\blam\gdom\bmu$. Let $\Parts$ be the poset of the
multipartitions of~$n$ ordered by dominance.

We identify the multipartition $\bmu$ with its \textbf{diagram}
$$[\bmu]=\set{(r,c,l)|1\le c\le \mu^{(l)}_r, r\ge1\text{ and }1\le l\le\ell}.$$
In this way, we will talk of the rows, columns and components of~$\bmu$.
If $\bmu\in\Parts$ let $\bump=({\mu^{(\ell)}}',\dots,{\mu^{(1)}}')$ be the \textbf{conjugate}
multipartition which is obtained from $\bmu$ by reversing the order of its components and then
swapping the rows and columns in each component. We frequently identify $\bmu$ and its diagram
$[\bmu]$, which we think of as an $\ell$-tuple of arrays of boxes in the plane.

Let $\bmu$ be a multicomposition of~$n$. A \textbf{$\bmu$-tableau} is a map
$\t\map{[\bmu]}\{1,2,\dots,n\}$. We think of $\t$ as a labelling of the diagram of $\bmu$ and we define
$\Shape(\t)=\bmu$. A $\bmu$-tableau $\t$ is \textbf{row standard} if $\t(r,c,l)<\t(r,c+1,l)$ whenever
$(r,c,l),(r,c+1,l)\in[\bmu]$. Let $\RStd(\bmu)$ be the set of row standard $\bmu$-tableau. The
\textbf{conjugate} of the~$\bmu$-tableau $\t$ is the $\bump$-tableau which is obtained from~$\t$ by
reversing its components and then swapping its rows and columns in each component.

Suppose now that $\bmu$ is a multipartition. Then a $\bmu$-tableau $\t$ is
\textbf{standard} if $\t$ and $\t'$ are both row standard tableaux. Let
$\Std(\bmu)$ be the set of standard $\bmu$-tableaux and
$\SStd(\bmu)=\set{(\s,\t)|\s,\t\in\Std(\bmu)}$ be the set of pairs of
standard $\bmu$-tableaux. For convenience we set
$$\Std(\Parts)=\bigcup_{\bmu\in\Parts}\Std(\bmu)\quad\text{and}\quad
\SStd(\Parts)=\bigcup_{\bmu\in\Parts}\SStd(\bmu).$$

Suppose that $\s$ is a row standard $\blam$-tableau and that $\t$ is a row
standard $\bmu$-tableau, for multicompositions $\blam$ and $\bmu$ of~$n$.
For each non-negative integer $m$ define $\s_m$ and $\t_m$ to be the
subtableaux of $\s$ and $\t$, respectively, which contain
$\{1,2,\dots,m\}$. Then $\s$ \textbf{dominates}~$\t$, and we write
$\s\gedom\t$, if
$$\Shape(\s_m)\gedom\Shape(\t_m), \qquad\text{ for } 1\le m\le n.$$
It is straightforward to check that $\s\gedom\t$ if and only if
$\t'\gedom\s'$. Observe also that $\blam\gedom\bmu$ if and only if $\tlam\gedom\tmu$.

We extend the dominance order to $\SStd(\Parts)$ in two ways by declaring
that if $(\s,\t)\in\SStd(\blam)$ and $(\u,\v)\in\SStd(\bmu)$ then
\begin{align*}
(\s,\t)\gedom(\u,\v)&\quad\text{if }\blam\gdom\bmu\text{ or }
     \blam=\bmu\text{ and } \s\gedom\u\text{ and }\t\gedom\v,\\
(\s,\t)\Gedom(\u,\v)&\quad\text{if }\s\gedom\u\text{ and } \t\gedom\v.
\end{align*}
By definition, $(\s,\t)\Gedom(\u,\v)$ implies that $(\s,\t)\gedom(\u,\v)$,
but the converse is false in general. As above, we write
$(\s,\t)\Gdom(\u,\v)$, and $(\s,\t)\gdom(\u,\v)$, if $(\s,\t)\ne(\u,\v)$
and $(\s,\t)\Gedom(\u,\v)$ and $(\s,\t)\gedom(\u,\v)$, respectively. The
partial order $\Gedom$ is the \textbf{strong dominance} order on
$\SStd(\Parts)$.

Suppose that $\bmu$ is a multicomposition of~$n$ and define $\tmu$ to be the unique row standard
$\bmu$-tableau such that $\tmu\gedom\s$ whenever $\s$ is a row standard
$\bmu$-tableau. That is,
$\tmu$ is the $\bmu$-tableau with the numbers $1,2,\dots,n$ entered in
order from left to right along the rows of each component of $\tmu$. Next
if $\s\in\Std(\bmu)$ let $d(\s)$ be the unique permutation in~$\Sym_n$
such that $\s=\tmu d(\s)$.

We are now ready to define the first bases of $\H$ that we will need. Let
$*\map{\H}\H$ be the unique anti-isomorphism of $\H$ which fixes each of
the generators in Definition~\ref{hecke}.

Let $\bmu$ be a multicomposition. Following~\cite{DJM:cyc,M:gendeg}, set
\begin{xalignat*}{3}
  u^+_\bmu&=\prod_{k=2}^l\prod_{m=1}^{|\mu^{(1)}|+\dots+|\mu^{(k-1)}|}\!\!(L_m-Q_k),&
  x_\bmu&= \sum_{w\in\Sym_\bmu} T_w,\\
  u^-_\bmu&=\prod_{k=1}^{\ell-1}\prod_{m=1}^{|\mu^{(1)}|+\dots+|\mu^{(\ell-k+1)}|}\!\!(L_m-Q_k),&
  y_\bmu&=\sum_{w\in\Sym_\bmu} (-\xi )^{-\ell(w)}T_w,
\end{xalignat*}
and let $m_\bmu=u^+_\bmu x_\bmu$ and $n_\bmu=u^-_\bmu y_\bmu$. Finally, if $\bmu$ is 
a multipartition define
$$m_{\s\t}=T_{d(\s)}^*m_\bmu T_{d(\t)} \quad\text{and}\quad
  n_{\s\t}=T_{d(\s)}^*n_\bmu T_{d(\t)},$$
for $(\s,\t)\in\SStd(\Parts)$. It follows easily from the relations
in~$\H$ (see \cite[Remark~3.7]{DJM:cyc}), that the elements $x_\bmu$
and $u_\bmu^+$, and $y_\bmu$ and $u_\bmu^-$, commute, so that
$m_{\s\t}^*=m_{\t\s}$ and $n_{\s\t}^{*}=n_{\t\s}$. We have the
following important and well-known result.

\begin{Theorem}\label{bases} Suppose that $R$ is an integral domain. Then:
\begin{enumerate}
\item $\set{m_{\s\t}|(\s,\t)\in\SStd(\Parts)}$ is a cellular basis of $\H$.
\item $\set{n_{\s\t}|(\s,\t)\in\SStd(\Parts)}$ is a cellular basis of $\H$.
\end{enumerate}
\end{Theorem}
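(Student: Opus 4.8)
The plan is to establish both statements simultaneously by exploiting the anti-isomorphism $*$ together with the conjugation map on multipartitions, reducing (b) to (a) (or vice versa). Concretely, there is an automorphism (or anti-automorphism) of $\H$ interchanging the trivial and sign representations which sends $m_\bmu$ to $n_{\bump}$ up to a unit; this is the cyclotomic analogue of the classical sign-twist $T_w\mapsto(-\xi)^{-\ell(w)}T_w$, $L_r\mapsto$ (a suitable conjugate of $L_r$). Once one knows that $\set{m_{\s\t}}$ is a cellular basis, applying this twist and the conjugation $\bmu\mapsto\bump$, $\t\mapsto\t'$ converts the cellular datum $(\Parts,\Std,m)$ with order $\gedom$ into the cellular datum $(\Parts,\Std,n)$ with the \emph{same} order, since $\blam\gedom\bmu$ if and only if $\bump\gedom\blamp$ and $\s\gedom\t$ if and only if $\t'\gedom\s'$. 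So the real content is part (a).

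For part (a), I would follow the Dipper--James--Mathas construction~\cite{DJM:cyc}. First I would recall the Murphy-style straightening: the element $m_\bmu$ spans, modulo more dominant terms, the image in $\H$ of the permutation module, and one shows $T_{d(\s)}^* m_\bmu T_{d(\t)}$ depends only on the pair $(\s,\t)$ up to terms $m_{\u\v}$ with $(\u,\v)$ strictly more dominant. The two things to check are: (i) \textbf{spanning} --- that the $m_{\s\t}$ span $\H$; and (ii) \textbf{linear independence} --- equivalently, that there are enough of them, i.e.\ $\sum_{\bmu}|\Std(\bmu)|^2 = \dim_R\H = \ell^n n!$, combined with a triangularity/freeness argument. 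For spanning one reduces, via the relations $T_r L_r + \delta = L_{r+1}(T_r-\xi+1)$, to rewriting an arbitrary monomial $L_1^{a_1}\cdots L_n^{a_n}T_w$ in terms of the $m_{\s\t}$, pushing the $L$-part into the $u^+_\bmu$ factors by a Garnir-type argument and using that $(L_1-Q_1)\cdots(L_1-Q_\ell)=0$ to bound exponents. The cellular axioms ($m_{\s\t}^*=m_{\t\s}$ and $h m_{\s\t}\equiv \sum_\u r_{\u\s}(h)\, m_{\u\t}$ mod the span of more dominant cell terms) then follow from $m_\bmu h \equiv (\text{scalar})\, m_\bmu$ modulo $\H^{\gdom\bmu}$ for $h$ in the relevant parabolic, plus the standard Murphy argument; these verifications are routine given~\cite{DJM:cyc}.

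The main obstacle is the combinatorial straightening lemma underlying spanning and upper-triangularity: showing that for any $d\in\Sym_n$ and any relevant power of the $L$'s, the product $m_\bmu\, L^{\mathbf a} T_d$ lies in the $R$-span of $\set{m_{\s\t} : (\s,\t)\Gedom(\tmu,\tmu)}$, equivalently of $m_{\u\v}$ with $\Shape\gedom\bmu$. This is exactly the content of \cite[\Sect4--6]{DJM:cyc} and requires the Garnir relations for the $T$-part together with a separate argument showing the $L$-monomials can be absorbed into the $u^\pm$-factors; the integral-domain hypothesis is used only to guarantee that the rank count is clean and that no denominators intrude. Since the statement asserts a \emph{known} result, I would present the proof as: invoke \cite[Theorem~3.26]{DJM:cyc} (or \cite[Theorem~6.3]{M:gendeg}) for (a), observe that the present $m_{\s\t}$ agree with theirs after the identification of parameters fixed above, and then deduce (b) by applying the sign automorphism and conjugation as described, checking that the cell order is preserved under $\bmu\mapsto\bump$.
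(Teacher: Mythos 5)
Your route is essentially the one the paper takes: part~(a) is not reproved but quoted, and part~(b) is deduced from (a) by a symmetry interchanging the $m$- and $n$-type elements (the paper cites \cite[Theorem~3.26]{DJM:cyc} together with \cite[Theorem~6.3]{AMR}, and for (b) remarks that one can either repeat the argument or specialize from a generic ground ring using the automorphism of \cite[(3.1)]{M:gendeg}). Two points in your write-up need repair, however.

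First, the degenerate case $\xi=1$ is not covered by \cite{DJM:cyc}, which treats the Ariki--Koike algebras with $\xi\ne1$; the definition of $\H$ in this paper (via the relation $T_rL_r+\delta_{\xi1}=L_{r+1}(T_r-\xi+1)$) includes the degenerate cyclotomic Hecke algebras, and for these part~(a) is \cite[Theorem~6.3]{AMR}, not \cite{M:gendeg}. Your proposal never separates this case, and the ``sign-twist'' you invoke, modelled on $T_w\mapsto(-\xi)^{-\ell(w)}T_w$, also degenerates awkwardly at $\xi=1$, so the reduction of (b) to (a) has to be checked there too.

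Second, and more substantively, the map you use to deduce (b) from (a) is not in general an ($R$-linear) automorphism of $\H=\HH_n(\xi,\bQ)$ for a fixed choice of parameters: any candidate must respect the relation $(L_1-Q_1)\cdots(L_1-Q_\ell)=0$, and the natural twist inverts $\xi$ and permutes the $Q_s$, so it only exists as a $\Z$-linear automorphism of the algebra over a generic ground ring (Laurent polynomials in the parameters). The correct argument, as the paper indicates, is either to rerun the Murphy/DJM--AMR construction verbatim with $n_\bmu=u^-_\bmu y_\bmu$ in place of $m_\bmu=u^+_\bmu x_\bmu$, or to prove (b) over the generic ring using that $\Z$-linear automorphism and then specialize to $R$, noting that cellularity (triangular structure constants and freeness of the cell layers) is preserved under base change. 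With that adjustment, and your (correct) observation that $\blam\gedom\bmu$ if and only if $\bump\gedom\blamp$ and $\s\gedom\t$ if and only if $\t'\gedom\s'$, so the cell order is respected, your argument matches the intended proof.
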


Part~(a) is proved in \cite[Theorem~3.26]{DJM:cyc} and \cite[Theorem~6.3]{AMR} for the non-degenerate
($\xi \ne1)$ and degenerate ($\xi =1$) cases, respectively.  Part~(b) is can be proved in the same way
or arguing by specialization from the case where $\H$ is defined over a `generic' ground ring in which
case $\H$ has a $\Z$-linear automorphism which interchanges these two bases; see, for example,
\cite[(3.1)]{M:gendeg}.

\subsection{Seminormal forms and strong dominance}
In this subsection we give a necessary condition for the product $m_{\s\t}n_{\u\v}$ to be non-zero. To
prove this we show that the transition matrices between the standard and seminormal bases of $\H$ are
ordered by strong dominance, a theme that continues throughout this paper.

Suppose that $(\s,\t)\in\SStd(\Parts)$ and $1\le k\le n$. If $k$ appears in
row~$r$ and column~$c$ of~$\t^{(l)}$ then define
$$\cont_\t(k)=\begin{cases}
                 \xi^{c-r}Q_l,&\text{if }\xi\ne1,\\
                  c-r+Q_l,&\text{if }\xi=1.
\end{cases}$$
Then by \cite[Prop.~3.7]{JM:cyc-Schaper} and \cite[Lemma~6.6]{AMR},
corresponding to the cases $\xi\ne1$ and $\xi=1$, respectively,
\begin{align}\label{Murphy}
m_{\s\t}L_k=\cont_\t(k) m_{\s\t}+
                \sum_{\substack{(\u,\v)\in\SStd(\Parts)\\(\u,\v)\gdom(\s,\t)}}r_{\u\v}m_{\u\v},
\end{align}
for some $r_{\u\v}\in R$.

For the rest of this subsection we assume that $R=\K$ is a field and that
$\H=\HK$ is semisimple. Equivalently, we assume that if $\s,\t\in\Std(\Parts)$
then $\cont_\s(k)=\cont_\t(k)$, for all $1\le k\le n$, if and only if $\s=\t$.

The following definition has its origins in the work of
Murphy~\cite{M:Nak}.

\begin{Defn}[\protect{\cite[Defn~3.1]{M:seminormal}}]\label{seminormal
basis}
    Suppose that $\blam\in\Parts$ and $(\s,\t)\in\SStd(\blam)$. Define
    $$F_\t=\prod_{k=1}^n\prod_{\substack{\s\in\Std(\Parts)\\
             \cont_\s(k)\ne\cont_\t(k)}}
    \frac{L_k-\cont_\s(k)}{\cont_\t(k)-\cont_\s(k)}\in\HK.$$
    Set $f_{\s\t}=F_\s m_{\s\t} F_\t$.
\end{Defn}

By (\ref{Murphy}),
$f_{\s\t}=m_{\s\t}+\sum_{(\u,\v)\gdom(\s,\t)}r_{\u\v}m_{\u\v}$, for some
$r_{\u\v}\in\K$. In particular, applying Theorem~\ref{bases} shows that
$\{f_{\s\t}\}$ is a basis of~$\HK$. Moreover, if $(\s,\t)\in\SStd(\Parts)$
and $1\le k\le n$ then
\begin{equation}\label{eigenvectors}
f_{\u\v}L_k=\cont_\v(k)f_{\u\v}
\end{equation}
by \cite[Prop.~2.6(iii)]{M:gendeg} and \cite[Page~109]{AMR}.

If $\s$ is a tableau and $1\le k\le n$ define $\comp_\s(k)=c$ if $k$ appears in~$\s^{(c)}$.
If~$\s$ and~$\t$ are tableau we write $\comp(\s)\le\comp(\t)$ if $\comp_\s(k)\le\comp_\t(k)$ for
$1\le k\le n$. Then $\comp(\s)\le\comp(\t)$ whenever $\s\gedom\t$ , but the converse is false
in general.

The following result is well-known. We include a proof because we do not know of a reference for
it.

\begin{Lemma} \label{lm22} Suppose that $\blam, \bmu\in\Parts$ and $\s\in\Std(\blam)$ and
that there exist two integers $a<b$ which are in the same row of $\tmu$ and in the same column
of~$\s$. Then there exists an element $w\in\Sym_{\{a,a+1,\cdots,b\}}$ and an integer $a\leq c<b$
such that
\begin{enumerate}
\item $\s w$ is standard; and
\item $\ell\bigl(d(\s)w\bigr)=\ell\bigl(d(\s)\bigr)+
\ell(w)$; and
\item $c,c+1$ are in the same row of $\tmu$ and the same
column of $\s w$.
\end{enumerate}
\end{Lemma}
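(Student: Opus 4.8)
The plan is to prove this by a careful downward induction, essentially transporting the situation one step closer to an "adjacent" pair of entries while staying inside the parabolic $\Sym_{\{a,a+1,\dots,b\}}$ and keeping the length-additivity condition (b) under control. First I would set up the basic picture: since $a<b$ lie in the same row of $\tmu$ and the same column of $\s$, the entries strictly between them, $a+1,\dots,b-1$, occupy various positions. The key observation is that because $\s$ is standard, the entries $a,a+1,\dots,b$ in column $\comp_\s$-sense are increasing down the column of $\s$ containing $a$ and $b$; I would look at the entry $b-1$ (or more precisely the largest entry $<b$ that shares a row of $\tmu$ with something relevant) and argue that one can find a transposition, or short product of transpositions, which swaps $b$ with a neighbour in $\s$ so that the result $\s w_1$ is still standard and $\ell(d(\s)w_1)=\ell(d(\s))+\ell(w_1)$.

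The core technical tool I would use is the standard fact (see e.g. \cite[Theorem~1.8]{M:Ulect} or the analogous statements in \cite{DJM:cyc}) that if $\s$ is a standard $\blam$-tableau and $s_k=(k,k+1)$ with $k,k+1$ not in the same row or column of $\s$, then exactly one of $\s s_k$, and one of $s_k\s$-type moves, is standard, and in the "good" direction one has $\ell(d(\s s_k))=\ell(d(\s))\pm1$ with a predictable sign governed by whether $k$ appears in an earlier or later row of $\s$ than $k+1$. Using this I would show: if $a$ and $b$ are not already adjacent, then one of $b-1,b$ is not in the same row or column of $\s$ (because $b-1$ cannot be directly above $b$ in the same column while also lying in the same row of $\tmu$ as $a$ forces a contradiction with standardness for an intermediate value), so we may apply $s_{b-1}$ to move $b$ up, obtaining a standard tableau $\s' = \s s_{b-1}$ with $\ell(d(\s'))=\ell(d(\s))+1$, and now $b-1$ and $b$ occupy the same column of $\s'$ while $a, b-1$ are in the same row of $\tmu$ (since $b-1,b$ being in the same row of $\tmu$ would have let us take $c=b-1$ immediately; otherwise $b-1$ moved into $b$'s old slot). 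Iterating this shrinks the gap $b-a$; the induction bottoms out either when we reach a pair $c,c+1$ in the same row of $\tmu$ and same column of $\s w$, giving (c), or when the gap is forced to be $1$ from the start.

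The main obstacle I anticipate is bookkeeping the length-additivity (b) across the iteration: each single step contributes $\ell(d(\s w_{k+1}))=\ell(d(\s w_k))+1$, and I need the accumulated $w=w_1 w_2\cdots$ to satisfy $\ell(d(\s)w)=\ell(d(\s))+\ell(w)$, which requires that the reduced expressions compose without cancellation — this follows because at each stage the transposition we apply is on the right and strictly increases the length of $d(\s\cdot)$, and a product of permutations that successively increase length in this way is automatically reduced when concatenated (a standard exchange-condition argument). The second subtlety is verifying that the transposition we want really does act between entries that are in distinct rows and columns of the current tableau — here I would argue by contradiction using that $a,\dots,b$ all lie in one row of $\tmu$, hence have strictly increasing contents-positions in $\tmu$, combined with standardness of $\s$ forcing them down a single column, so no two consecutive ones can share a row of $\s$ unless they already share a column, and in the latter case we are done. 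I would also need to confirm $w\in\Sym_{\{a,\dots,b\}}$, which is immediate since every transposition used only ever moves values among $\{a,a+1,\dots,b\}$. Once these points are nailed down the three conclusions (a), (b), (c) are simultaneously produced by the terminal step of the induction.
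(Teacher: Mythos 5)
Your overall plan (induct on $b-a$, shrinking the gap by length-additive right multiplications from $\Sym_{\{a,\dots,b\}}$ until a column-adjacent pair appears) is the right one, and it is the paper's strategy too, but the central step of your argument is false as stated. You claim that when $b-a>1$ the entries $b-1$ and $b$ cannot lie in the same row (or column) of $\s$, so that the single swap $\s s_{b-1}$ is standard and increases length by one. Nothing in the hypotheses constrains the intermediate entries $a+1,\dots,b-1$: only $a$ and $b$ are assumed to share a column of $\s$, so your assertion that ``standardness forces them down a single column, so no two consecutive ones can share a row of $\s$'' is simply wrong. Concretely, take $\blam=(4,4)$, $\bmu=(8)$, $\s=\tlam$ (first row $1,2,3,4$, second row $5,6,7,8$), $a=2$, $b=6$. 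Then $2$ and $6$ lie in the second column of $\s$, but $5$ and $6$ are adjacent in the second row, so $\s s_5$ is not even row standard and your move is unavailable; moreover one checks directly that no single adjacent transposition $s_k\in\Sym_{\{2,\dots,6\}}$ sends $\s$ to a standard tableau containing a column-adjacent pair $c,c+1$ with $2\le c<6$, so the problem cannot be fixed by a cleverer choice of $s_k$ --- several entries have to be moved at once. There are two further gaps: even when $b-1$ and $b$ lie in different rows and columns, the sign of the length change is not automatic; if $b-1$ occupies a later box of $\s$ than $b$ (which standardness allows, e.g. $\blam=(2,2,1)$ with rows $1\,3$, $2\,5$, $4$, and $a=3$, $b=5$) then $\ell\bigl(d(\s)s_{b-1}\bigr)=\ell\bigl(d(\s)\bigr)-1$, so condition (b) fails for your chosen move and your ``each step adds one'' bookkeeping needs an argument you do not give. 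Finally, after swapping $b-1\leftrightarrow b$ the pair sharing a column of $\s s_{b-1}$ is $(a,b-1)$ (since $b-1$ lands in the old box of $b$), not $(b-1,b)$, so the description of your inductive loop is also garbled, although this part is repairable.

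For comparison, the paper's proof first reduces (as you essentially do) to the case where $a$ and $b$ occupy vertically adjacent boxes in rows $r$ and $r+1$ of $\s^{(l)}$, and then, instead of a single transposition, it chooses $c<b$ maximal such that $c$ lies in an earlier component or weakly above row $r$ while $c+1$ lies strictly below, and multiplies by the cycle $w=(c,c+1,\dots,b)=s_{b-1}\cdots s_c$; this moves the whole block of entries $c,\dots,b-1$ up by one and places the value $c$ in the box of $b$, and the maximality of $c$ (which forces all of $c+1,\dots,b$ to lie below row $r$) is what is used to control standardness, dominance and length-additivity in a single step, after which induction is applied to the smaller pair $(a,c)$. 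The moral is that the correct elementary move is a block move governed by where the intermediate entries sit relative to row $r$, not a sequence of swaps of $b$ with its predecessor; as written, your argument has a genuine gap at exactly this point.
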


\begin{proof} By assumption, the integers $a, a+1,\cdots, b$ are all in the same row of $\tmu$ so
because~$\s$ is standard we may assume, without loss of generality, that $a$ appears in row~$r$
of~$\s^{(l)}$ and that $b$ appears in row~$r+1$ of~$\s^{(l)}$, where $1\le l\le\ell$. We now argue
by induction on $b-a$.

If $b-a=1$ then there is nothing to prove, so suppose that $b-a>1$. Let
$c<b$ be maximal such that
\begin{enumerate}
  \item $\comp_\s(c)<l$ or $\comp_\s(c)=l$ and $c$ appears in the first $r$ rows of $\s^{(l)}$, and
  \item  $\comp_s(c+1)>l$ or $\comp_\s(c)=l$ and $c+1$ appears below row~$r$ of $\s^{(l)}$.
\end{enumerate}
In particular, this means that the numbers $c+1,c+2,\cdots,b$ all appear `below' row~$r$ of $\s^{(l)}$.
Let $\t=\s w$, where $w=(c,c+1,\dots,b)=s_{b-1}\dots s_c\in\Sym_n$. Then $\s\gedom\t$ and
$\ell(d(\t))=\ell(d(\s))+\ell(w)$.  The integers $a<c$ are in the same row of $\t^{\bmu}$ and in the
same column of $\t=\s w$. Note that $c-a<b-a$. The Lemma now follows by induction.
\end{proof}

\begin{Prop}\label{multicompositions}
  Suppose that $R=\K$ and $\blam$ is a multicomposition of~$n>0$. Then there exist scalars
  $a^\blam_{\u\v}\in\K$ such that
  $$m_\blam = \sum_{(\u,\v)\in\SStd(\Parts)} a^\blam_{\u\v}f_{\u\v},$$
  and $a^\blam_{\s\t}\ne0$ only if $\comp(\tlam)\ge\comp(\u)$, $\comp(\tlam)\ge\comp(\v)$  and $i$
  and $j$ are in different columns of $\u$ and $\v$ whenever they are in the same column of
  $\tlam$.
\end{Prop}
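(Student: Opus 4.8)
The plan is to expand $m_\blam$ in the seminormal basis $\{f_{\u\v}\}$ and then squeeze the nonzero coefficients by applying the $L_k$-eigenvalue action from (\ref{eigenvectors}) together with the triangularity (\ref{Murphy}). Write $m_\blam = \sum a^\blam_{\u\v} f_{\u\v}$; since the $f_{\u\v}$ are simultaneous eigenvectors for $L_1,\dots,L_n$ with $f_{\u\v}L_k = \cont_\v(k) f_{\u\v}$, and $m_\blam = m_{\tlam\tlam}$, the idea is to extract information by acting with suitable symmetrizing or antisymmetrizing elements of $\HSym$ on the right. Concretely, note that $m_\blam T_w = \xi^{\ell(w)} m_\blam$ for all $w\in\Sym_\blam$, because $x_\blam T_w = \xi^{\ell(w)} x_\blam$; this is the key rigidity that forces columns of $\v$ to spread out relative to rows of $\tlam$.

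First I would record that $m_\blam$ lies in the $\cont_{\tlam}$-weight space on \emph{both} sides: from (\ref{Murphy}) and the definition of $F_\t$, one has $F_{\tlam} m_\blam = m_\blam = m_\blam F_{\tlam}$ after projecting, so only pairs $(\u,\v)$ with $\cont_\u(k)=\cont_{\tlam}(k)$ and $\cont_\v(k)=\cont_{\tlam}(k)$ for all $k$ can occur — but in the semisimple regime that would force $\u=\v=\tlam$, which is too strong; the resolution is that $m_\blam$ is \emph{not} an eigenvector because $u^+_\blam$ is not central, so instead I track the weaker content-comparison. The correct first step is the triangularity $m_\blam = m_{\tlam\tlam} = \sum_{(\u,\v)\Gedom(\tlam,\tlam)} (\text{scalars})\, f_{\u\v}$, which follows from the unitriangular change of basis $f_{\s\t}=m_{\s\t}+\sum_{(\u,\v)\gdom(\s,\t)} r_{\u\v}m_{\u\v}$ inverted, combined with the fact that $m_\blam$ is a positive-coefficient combination of $\{m_{\s\t}\}$ only involving $\bmu\gedom\blam$ — this already gives $\Shape(\u)=\Shape(\v)\gedom\blam$ and hence, via the relation between dominance of tableaux and of compositions, the inequalities $\comp(\tlam)\ge\comp(\u)$ and $\comp(\tlam)\ge\comp(\v)$.

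For the column-separation condition I would use Lemma~\ref{lm22} as the engine. Suppose $a^\blam_{\u\v}\ne 0$ and, for contradiction, that two integers $a<b$ lie in the same row of $\tlam$ but the same column of $\v$ (the $\u$ case is symmetric, using $m_\blam^*=m_\blam$). By Lemma~\ref{lm22} applied to $\v$, after replacing $a,b$ we may assume $a,a+1=c,c+1$ are in the same row of $\tlam=\t^\bmu$ (here $\bmu=\blam$) and the same column of $\v$, with $\v s_c$ standard and lengths adding. Since $s_a$ fixes the row-tableau within $\Sym_\blam$, acting on the right of $m_\blam$ by $T_a$ multiplies it by $\xi$; but acting by $T_a$ on $f_{\u\v}$, since $a,a+1$ are in the same column of $\v$, produces $-f_{\u\v}$ plus terms $f_{\u\v'}$ with $\v'\gdom\v$ — this is the standard Garnir/column relation, provable from (\ref{Murphy}) and (\ref{eigenvectors}) by the usual rank-one $sl_2$ computation on the pair of boxes. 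Comparing coefficients of $f_{\u\v}$ on both sides of $\xi m_\blam = m_\blam T_a$ forces $\xi a^\blam_{\u\v} = -a^\blam_{\u\v}$, so $(\xi+1)a^\blam_{\u\v}=0$; since $\xi\ne -1$ in the semisimple case (as $1+\xi+\dots+\xi^{e-1}=0$ would be violated, or one argues $\xi+1$ is invertible because $\H$ semisimple forces the quadratic $(T_a+1)(T_a-\xi)$ to split with distinct roots), we get $a^\blam_{\u\v}=0$, a contradiction.

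The main obstacle I anticipate is making the action of $T_a$ on $f_{\u\v}$ precise when $a,a+1$ are in the same column of $\v$: one needs that the two-dimensional $\HSym[{\{a,a+1\}}]$-submodule spanned by $f_{\u\v}$ and the relevant $f_{\u\v'}$ is the "sign-isotypic-first" module, so that $f_{\u\v}T_a = -f_{\u\v} + (\text{higher})$ rather than $+\xi f_{\u\v}+\dots$. This is exactly where semisimplicity and the content condition $\cont_{\v}(a+1) = \xi^{-1}\cont_\v(a)$ (valid since the boxes are vertically adjacent, at least when $\comp_\v(a)=\comp_\v(a+1)$) enter; when the boxes are in the same column but different components the content relation can fail, so one must check that Lemma~\ref{lm22} can always be arranged to land on a genuinely vertically-adjacent pair within a single component — which it can, since "same column of $\v$" for the output pair $c,c+1$ produced by the lemma means $\comp_\v(c)=\comp_\v(c+1)$ and adjacent rows. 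Once that local $2\times 2$ computation is pinned down, the rest is bookkeeping with the unitriangularity and the commutation $m_\blam L_k = \sum(\dots)$.
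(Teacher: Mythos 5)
There are two genuine gaps. The more serious one concerns the component condition $\comp(\tlam)\ge\comp(\u)$, $\comp(\tlam)\ge\comp(\v)$, which your second paragraph does not actually prove. Inverting the unitriangular relation (\ref{Murphy}) between $\{m_{\s\t}\}$ and $\{f_{\s\t}\}$ only gives triangularity with respect to the ordinary dominance order $\gedom$ on pairs, and for a pair $(\u,\v)$ whose shape \emph{strictly} dominates $\blam$ this order imposes no condition whatsoever on the tableaux $\u,\v$ themselves; the strong form $m_\blam=\sum_{(\u,\v)\Gedom(\tlam,\tlam)}a_{\u\v}f_{\u\v}$ that you invoke is essentially Corollary~\ref{multipartitions}, which in the paper is \emph{deduced from} this proposition, so assuming it here is circular. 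Nor does shape dominance yield the componentwise inequality: for $\blam=((1),(1,1))$ the shape $\bnu=((2),(1))$ satisfies $\bnu\gdom\blam$, and the standard $\bnu$-tableau $\u$ with $1$ in the second component has $\comp_\u(1)=2>1=\comp_{\tlam}(1)$ (the fact $\s\gedom\t\Rightarrow\comp(\s)\le\comp(\t)$ applies to $\t^{\bnu}$, not to an arbitrary $\u$ of shape $\bnu$). The mechanism that actually forces the component condition is the factor $u^+_\blam=\prod(L_m-Q_k)$, which your argument never uses: the paper argues by induction on the components, writing $m_\blam=m_\bnu\prod_{k=1}^m(L_k-Q_{l+1})$ for a suitable multicomposition $\bnu$ and using the eigenvalue property (\ref{eigenvectors}) to produce a vanishing factor $\cont_\v(k')-Q_{l+1}=0$ whenever some entry $k\le m$ sits in component $l+1$ of $\v$. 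Without an argument of this kind, half of the statement is unproved.

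The second gap is in the column-separation argument. Your reduction ``by Lemma~\ref{lm22} we may assume $a,b$ are adjacent'' is not legitimate as stated: the lemma replaces $\v$ by a different standard tableau $\v w$, so your coefficient comparison in $\xi m_\blam=m_\blam T_c$ kills $a^\blam_{\u,\v w}$, not $a^\blam_{\u\v}$. To transfer the vanishing back to $\v$ you must propagate non-vanishing of coefficients along the chain of simple transpositions making up $w$; this is exactly the $2\times2$ seminormal computation at the heart of the paper's proof: when $i,i+1$ lie in the same row of $\tlam$ and in different rows and columns of $\v$, comparing coefficients of $f_{\u\v}$ and $f_{\u\t}$ (with $\t=\v s_i$) in $m_\blam T_i=\xi m_\blam$ gives $a^\blam_{\u\v}=\bigl((c_\v-\xi c_\t)/(c_\v-c_\t)\bigr)a^\blam_{\u\t}$, where $c_\v=\cont_\v(i)$ and $c_\t=\cont_\t(i)$, and semisimplicity guarantees this scalar is defined and non-zero, so $a^\blam_{\u\v}\ne0$ if and only if $a^\blam_{\u\t}\ne0$. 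Only after this propagation does the adjacent-column case (your $(\xi+1)a^\blam_{\u\v}=0$ with $\xi\ne-1$, which is fine and matches the paper, where the exact action $f_{\u\v}T_i=-f_{\u\v}$ is quoted from the seminormal form rather than rederived) finish the argument. As written, your proof only excludes tableaux in which the offending pair is already adjacent.
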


\begin{proof}
  We consider only the case when $\xi\ne1$. The case $\xi =1$ is similar and may be proved using
  the results of \cite[\Sect6]{AMR}. The only real difference between the cases $\xi\ne1$ and
  $\xi=1$ is the choice of content function: if $\xi\ne1$ then $\cont_\v(k)=\xi^{c-r}Q_l$, when
  $\v(r,c,l)=k$, and if $\xi=1$ then, instead, $\cont_\v(k)=c-r+Q_l$. Analogous minor
  `logarithmic' adjustments are required in the argument below when $\xi=1$.

  By (\ref{Murphy}) there exist $a^\blam_{\u\v}\in\K$ such that we can write
  $$m_\blam = f_{\tlam\tlam}+\sum_{(\u,\v)\gedom(\tlam,\tlam)} a^\blam_{\u\v}f_{\u\v}.$$
  Suppose that $s_i=(i,i+1)\in\Sym_\blam$, where $1\le i<n$. It is well-known and
  easy to check that $x_\blam T_i=\xi x_\blam$ (see, for example,
  \cite[Lemma~3.2]{M:Ulect}), so that $m_\blam T_i=\xi m_\blam$. To compute
  the action of $T_i$ on the right hand side of the last equation we need to recall
  how $\H$ acts on its seminormal basis.

  By \cite[Prop.~2.7]{M:gendeg}, $f_{\u\v}T_i=\xi f_{\u\v}$ if $i$ and
  $i+1$ are in the same row of $\v$ and $f_{\u\v}T_i=-f_{\u\v}$ if $i$ and
  $i+1$ are in the same column of $\v$. Otherwise, $i$ and $i+1$ are in
  different rows and columns of $\v$ so we set $\t=\v(i,i+1)$ and
  $c_\v=\cont_\v(i)$ and $c_\t=\cont_{\t}(i)=\cont_\v(i+1)$. Without loss of
  generality, $\v\gdom\t$. Hence, by \cite[Prop.~2.7]{M:gendeg} we have
  \begin{align*}
    f_{\u\v}T_i &= \frac{(\xi -1)c_\t}{(c_\t-c_\v)}f_{\u\v} +f_{\u\t},\\
    f_{\u\t}T_i&= \frac{(\xi c_\v-c_\t)(c_\v-\xi c_\t)}{(c_\v-c_\t)^2}f_{\u\v}
               + \frac{(\xi -1)c_\v}{(c_\v-c_\t)}f_{\u\t}.
  \end{align*}
  Therefore, $a^\blam_{\u\v}=0$ if $i$ and $i+1$ are in the same column. If
  $\v$ and $\t$ are both standard tableau and $\v\gdom\t$ then
  \begin{align*}
    \xi a^\blam_{\u\v}&=\frac{(\xi -1)c_\t}{(c_\t-c_\v)}a^\blam_{\u\v}
                          +\frac{(\xi c_\v-c_\t)(c_\v-\xi c_\t)}{(c_\v-c_\t)^2}a^\blam_{\u\t}\\
    \xi a^\blam_{\u\t}&=a^\bmu_{\u\v}+\frac{(\xi -1)c_\v}{(c_\v-c_\t)}a^\blam_{\u\t}.
  \end{align*}
  Solving these equations shows that $a^\blam_{\u\v}=(c_\v-\xi c_\t)/(c_\v-c_\t)\cdot
  a^\blam_{\u\t}$.  In particular, $a^\blam_{\u\v}\ne0$ if and only if $a^\blam_{\u\t}\ne0$, since
  $c_\v\ne c_\t$ and $c_\v\ne \xi c_\t$ (because $\HK$ is semisimple), whenever~$i$ and $i+1$ are in the same
  row of~$\tlam$ and in different columns of~$\v$. Applying Lemma~\ref{lm22} and acting by~$\Sym_\blam$
  now shows that $a^\blam_{\u\v}=0$ whenever there exist $i$ and $j$ which are in the same column
  of~$\v$ and the same row of~$\tlam$.

  To complete the proof we need to show that $\comp(\tlam)\ge\comp(\u)$ and
  $\comp(\tlam)\ge\comp(\v)$.  In fact, since $m_\blam=m_\blam^*$ it is enough to show that
  $\comp(\tlam)\ge\comp(\v)$. Let $1\le l\le\ell$ be minimal such that $|\lambda^{(l)}|>0$. If $l=\ell$
  then $\comp(\tlam)\ge\comp(\v)$ for any tableau $\v$ so in this case there is nothing to prove.
  Suppose then that $1\le l<\ell$ and fix $(\s,\t)\in\Std(\Parts)$ with $a^\blam_{\s\t}\ne0$.
  Let~$\bnu$ be the multicomposition with $\nu^{(k)}=\lambda^{(k)}$ if $k\ne l,l+1$, $\nu^{(l)}=(0)$
  and where $\nu^{(l+1)}$ is the composition obtained by concatenating $\lambda^{(l)}$ and
  $\lambda^{(l+1)}$. Let $m=|\lambda^{(l)}|$.
  Then $x_\blam=x_\bnu$ and
  $$m_\blam = m_\bnu \prod_{k=1}^m(L_k-Q_{l+1})
       =\sum_{(\u,\v)\in\SStd(\Parts)}a^\bnu_{\u\v}f_{\u\v}\prod_{k=1}^m(L_k-Q_{l+1}).$$
  Thus, $a^\blam_{\u\v}=a^\bnu_{\u\v}(\cont_\v(1)-Q_{l+1})\dots(\cont_\v(m)-Q_{l+1})$ by
  (\ref{eigenvectors}). By induction
  $a^\bnu_{\u\v}\ne0$ only if $\comp(\tnu)\ge\comp(\v)$. In particular, if
  $a^\bnu_{\u\v}\ne0$ then $\comp_\v(k)\le\comp_{\tlam}(k)=\comp_{\tnu}(k)$ whenever $m<k\le n$.
  Consequently, if $\comp(\tlam)\not\ge\comp(\v)$ then there must exist a $k$ with $1\le k\le m$ and
  $\comp_\v(k)=l+1$. Therefore, since $\v$ is standard and $\comp_{\v}(k')\leq\comp_{\tnu}(k')$, there
  exists $1\le k'\le k$ in the first row and column of $\v^{(l+1)}$ so that $\cont_\v(k')=Q_{l+1}$.
  Consequently, $a^\blam_{\u\v}=0$ and $f_{\u\v}$ does not appear in $m_{\blam}$ in this case. It
  follows that $a^\blam_{\u\v}\ne0$ only if $\comp(\tlam)\ge\comp(\v)$ as required.
\end{proof}

\begin{Cor}\label{multipartitions}
  Suppose that $\bmu\in\Parts$ is a multipartition of $n$. Then there exist
  scalars $a_{\u\v}\in\K$ such that
  $$m_\bmu=\sum_{\substack{(\u,\v)\in\SStd(\Parts)\\(\u,\v)\Gedom(\tmu,\tmu)}}
            a_{\u\v}f_{\u\v}.$$
\end{Cor}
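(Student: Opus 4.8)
The plan is to read the corollary off Proposition~\ref{multicompositions} applied with $\blam=\bmu$, and then to improve the conclusion from the weak order $\gedom$ to the strong order $\Gedom$ by exploiting that $\bmu$ is a multipartition rather than merely a multicomposition.

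First I would record what is already available. By~(\ref{Murphy}) we may write $m_\bmu=f_{\tmu\tmu}+\sum_{(\u,\v)\gdom(\tmu,\tmu)}a_{\u\v}f_{\u\v}$, and in every term with $a_{\u\v}\ne0$ we have $\Shape(\u)=\Shape(\v)\gedom\bmu$ (when $\Shape(\u)=\bmu$ this forces $\u=\v=\tmu$, since $\tmu$ is the maximal standard $\bmu$-tableau). Proposition~\ref{multicompositions}, with $\blam=\bmu$ so that $\tlam=\tmu$, shows moreover that $a_{\u\v}\ne0$ only if $\comp(\tmu)\ge\comp(\u)$, $\comp(\tmu)\ge\comp(\v)$, and no two entries that lie in a common column of~$\u$ (respectively of~$\v$) lie in a common row of~$\tmu$; this last statement is precisely what is established in the proof of Proposition~\ref{multicompositions} via Lemma~\ref{lm22}. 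Since $m_\bmu^*=m_\bmu$ and $f_{\u\v}^*=f_{\v\u}$, it suffices to show that these three conditions on~$\v$ already force $\v\gedom\tmu$; the same will then hold for~$\u$, so that $(\u,\v)\Gedom(\tmu,\tmu)$, which is the assertion.

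The substance of the argument, and the only real work, is this combinatorial implication. Since $\tmu$ is filled by entering $1,2,\dots,n$ in order along the rows of each component in turn, unwinding the definition of $\gedom$ on tableaux shows that $\v\gedom\tmu$ is equivalent to the following: for every component index~$L$ and row index~$i$, every integer lying in a component strictly before~$L$ of~$\tmu$, or in one of the first~$i$ rows of component~$L$ of~$\tmu$, also lies in a component strictly before~$L$ of~$\v$, or in one of the first~$i$ rows of component~$L$ of~$\v$. Fix such an integer~$k$. If $k$ lies in an earlier component of~$\tmu$, then $\comp_\v(k)\le\comp_\tmu(k)<L$ and there is nothing to prove. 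Otherwise $k$ lies in component~$L$, row $r\le i$, of~$\tmu$, and $\comp_\v(k)\le L$; if $\comp_\v(k)<L$ we are again done, so assume $\comp_\v(k)=L$ and, for a contradiction, that $k$ lies in a row $r'>i\ge r$ of component~$L$ of~$\v$. The entries in rows $1,\dots,r'$ of the column of~$\v$ through~$k$ are then all at most~$k$, since $\v$ is standard; they all lie in component~$L$ of~$\v$; and therefore, using $\comp_\v\le\comp_\tmu$ together with $\comp_\tmu(1)\le\comp_\tmu(2)\le\dots\le\comp_\tmu(n)$, they all lie in component~$L$ of~$\tmu$ in rows $\le r$. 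As $r'>r$, two of these entries lie in a common row of~$\tmu$ while lying in a common column of~$\v$, contradicting the third condition. Hence $\v\gedom\tmu$.

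I expect the only delicate point to be the reformulation of $\v\gedom\tmu$ above and, in particular, the observation that the column condition coming from Proposition~\ref{multicompositions} is genuinely needed: the inequalities $\comp(\tmu)\ge\comp(\v)$ alone do not force $\v\gedom\tmu$, as one already sees for $\bmu=\bigl((2),(1)\bigr)$ with $\v$ the standard tableau of shape $\bigl((2,1),(0)\bigr)$ having $1$ and $2$ in its first column.
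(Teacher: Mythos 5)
Your proof is correct and follows essentially the same route as the paper: apply Proposition~\ref{multicompositions} with $\blam=\bmu$ and deduce $\v\gedom\tmu$ (and $\u\gedom\tmu$ via the involution $*$) from the component inequality together with the row/column condition. The only difference is that you spell out, via the column-pigeonhole argument, the combinatorial implication that the paper merely asserts ("these two conditions imply that the node containing $m$ in $\v$ is never below the node containing $m$ in $\tmu$"), which is a welcome addition rather than a deviation.
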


\begin{proof}
  Suppose that $a_{\u\v}\ne0$ for some $(\u,\v)\in\SStd(\Parts)$.  By
  Proposition~\ref{multicompositions} $a_{\u\v}\ne0$ only if $\comp(\v)\le\comp(\tmu)$ and $i$
  and $j$ are in different columns of $\v$ whenever they are in the same row of $\tmu$.
  These two conditions imply that if $1\le m\le n$ then the node containing~$m$ in~$\v$ is
  never below the node which contains~$m$ in~$\tmu$. This implies that $\v\gedom\tmu$.
  Similarly, or by applying the involution~$*$, $\u\gedom\tmu$.
\end{proof}

We want to generalize result to an arbitrary basis element $m_{\s\t}$. To do this we use the following
two technical results, the first of which is due to Murphy~\cite{Murphy:basis}.  First, recall from
\cite[Theorem~3.8]{M:Ulect} that if $\hat\v$ and $\v$ are two row standard $\bnu$-tableaux then
$\v\gedom\hat\v$ if and only if $d(\v)\le d(\hat\v)$ where~$\le$ is the Bruhat order on~$\Sym_n$. That
is, $u\le v$ if $u$ has a reduced expression which is a subexpression of some reduced expression
of~$v$.

\begin{Lemma}[\protect{Murphy~\cite[Lemma~3.5]{Murphy:basis}}]\label{technical}
   Suppose that $\hat\v\in\RStd(\bmu)$, $\s\in\Std(\bmu)$ and $u,w\in\Sym_n$
   such that $u\le w$, $\hat{\v}\gedom\s$, $\hat\v u$ is row standard,  $\s\gdom \s w$ and
   $\ell(d(\s)w)=\ell(d(\s))+\ell(w)$. Then $\hat\v u\gedom\s w$.
\end{Lemma}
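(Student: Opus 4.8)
The plan is to translate the statement entirely into the Bruhat order on $\Sym_n$, using the dictionary recalled just before the lemma: a $\bmu$-tableau $\x$ is row standard exactly when $d(\x)$ is the distinguished (minimal length) representative of the coset $\Sym_\bmu d(\x)$, and for row standard $\bmu$-tableaux $\x,\y$ one has $\x\gedom\y$ if and only if $d(\y)\le d(\x)$ in the Bruhat order (\cite[Theorem~3.8]{M:Ulect}). By hypothesis $\hat\v u$ is row standard, and $\s w$ is row standard too (since the statement $\s\gdom\s w$ presupposes it); both have shape $\bmu$, so under this dictionary the desired conclusion $\hat\v u\gedom\s w$ is equivalent to the single inequality $d(\hat\v u)\le d(\s w)$, and that is what I would aim to prove.

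First I would observe that, since $\hat\v u=\tmu d(\hat\v)u$ and $\hat\v u$ is row standard, we actually have $d(\hat\v u)=d(\hat\v)u$ and this element is already a distinguished coset representative; likewise $\s w=\tmu d(\s)w$ is row standard, so $d(\s)w$ lies in the coset $\Sym_\bmu d(\s w)$ with distinguished representative $d(\s w)$. It therefore suffices to prove $d(\hat\v)u\le d(\s)w$ and then pass from $d(\s)w$ down to $d(\s w)$. For the inequality $d(\hat\v)u\le d(\s)w$ I would use the subword characterisation of the Bruhat order: choose a reduced expression for $d(\s)$ containing a reduced expression for $d(\hat\v)$ as a subword (possible since $\hat\v\gedom\s$ gives $d(\hat\v)\le d(\s)$), and a reduced expression for $w$ containing one for $u$ as a subword (since $u\le w$). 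Because $\ell\bigl(d(\s)w\bigr)=\ell\bigl(d(\s)\bigr)+\ell(w)$, the concatenation of the two chosen reduced expressions is a reduced expression for $d(\s)w$, and it contains a (not necessarily reduced) expression for $d(\hat\v)u$ as a subword; since any subexpression of a reduced expression for $y$ represents an element $\le y$, we get $d(\hat\v)u\le d(\s)w$.

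To finish I would invoke the fact that the map sending an element of $\Sym_n$ to the distinguished representative of its $\Sym_\bmu$-coset is order-preserving for the Bruhat order; applied to $d(\hat\v)u\le d(\s)w$ this gives $d(\hat\v u)=d(\hat\v)u\le d(\s w)$, which is exactly what was needed. If a self-contained proof of this projection fact is wanted, it comes from a short induction using the lifting property of the Bruhat order together with the defining property of distinguished coset representatives (namely that a simple reflection in $\Sym_\bmu$ always increases their length).

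I expect the real difficulty here to be bookkeeping rather than substance. The two points to be careful about are: (i) that $\s w$ genuinely is row standard, so that $d(\s w)$ and the tableau-dominance statement are even defined — this is precisely what the hypothesis $\s\gdom\s w$ supplies; and (ii) having the order-preservation of the parabolic projection on hand. Everything else is a routine unwinding of the correspondence between row standard tableaux, distinguished coset representatives, and the Bruhat order, and no properties of the Hecke algebra enter; the argument is essentially Murphy's.
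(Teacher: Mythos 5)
Your proof is correct, but note first that the paper itself does not prove this lemma at all: it is quoted from Murphy~\cite[Lemma~3.5]{Murphy:basis}, and the only accompanying discussion explains how to pass from Murphy's setting of compositions to multicompositions by concatenating components ($\bmu\mapsto\mu^\vee$, $\t\mapsto\t^\vee$). So your argument is a genuinely different, self-contained route: translate dominance into the Bruhat order via \cite[Theorem~3.8]{M:Ulect}, write a reduced expression for $d(\s)w$ as the concatenation of reduced expressions for $d(\s)$ and $w$ (legitimate because $\ell(d(\s)w)=\ell(d(\s))+\ell(w)$), extract subwords representing $d(\hat\v)$ and $u$, and use the fact that any subword of a reduced word represents a Bruhat-smaller element to conclude $d(\hat\v)u\le d(\s)w$. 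That is exactly the right mechanism and it goes through; the citation buys the authors brevity, while your proof has the merit of being elementary Coxeter combinatorics and of making visible that the hypothesis $\s\gdom\s w$ enters only to guarantee that $\s w$ is row standard (as in the one application, where $\s w=\t$ is standard), and that the standardness of $\s$ plays no role in the argument. Two small tidy-ups. First, the dictionary in your opening sentence is stated backwards: for row standard tableaux of the same shape one has $\x\gedom\y$ if and only if $d(\x)\le d(\y)$ --- the more dominant tableau corresponds to the Bruhat-smaller permutation (e.g.\ $d(\tmu)=1$); all of your subsequent uses have the correct direction, so this is only a slip in the statement, not in the argument. Second, the final ``projection onto distinguished coset representatives'' step is redundant: since $\s w=\tmu d(\s)w$ and $\s w$ is row standard, $d(\s w)=d(\s)w$ on the nose, so $d(\hat\v u)=d(\hat\v)u\le d(\s)w=d(\s w)$ already completes the proof without invoking order-preservation of the parabolic projection.
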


When comparing our statement of Lemma~\ref{technical} with Murphy's result note that Murphy
considers row standard $\mu$-tableaux, where $\mu$ is a composition. Let
$\mu^\vee=\mu^{(1)}\vee\dots\vee\mu^{(\ell)}$ be the composition obtained by concatenating the
parts of~$\bmu$. Then any row standard $\bmu$-tableau~$\t$ corresponds to the row standard
$\mu^\vee$-tableau $\t^\vee=\t^{(1)}\vee\dots\vee\t^{(\ell)}$ obtained by concatenating the rows
of $\t$.  This observation allows us to rewrite Murphy's lemma in the form above.

\begin{Lemma}\label{technical2}
  Suppose that $\u,\v\in\Std(\bnu)$ and $\t\in\Std(\bmu)$ are standard
  tableaux such that $\v\gedom\tmu$ and that $x\in\Sym_n$ with $x\leq d(\t)$. Then
  $m_{\u\v}T_x$ can be written as a linear combination of terms
  $m_{\c\d}$ such that $\c\gedom\u$ and $\d\gedom\t$.
\end{Lemma}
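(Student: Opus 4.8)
The plan is to induct on the length $\ell(x)$, reducing to the case where $x=s_r$ is a simple reflection and $m_{\u\v}T_r$ is computed via the action of $\H$ on the seminormal basis (or, equivalently, via Murphy's combinatorial rules for $m_{\s\t}T_r$). Since $x\le d(\t)$, we may fix a reduced expression $x=s_{r_1}\dots s_{r_k}$ that is a subexpression of a reduced expression for $d(\t)$; writing $x=x's_r$ with $\ell(x')=\ell(x)-1$ and $x'\le d(\t)$, and noting $m_{\u\v}T_x=(m_{\u\v}T_{x'})T_r$, it suffices to prove the one-step statement: if $m_{\u\v}$ appears with $\v\gedom\tmu$, $\v\in\Std(\bnu)$, then $m_{\u\v}T_r$ is a linear combination of terms $m_{\c\d}$ with $\c\gedom\u$ and $\d\gedom\v$, \emph{provided} we can also guarantee that each such $\d$ still dominates~$\tmu$ and that the inductive hypothesis propagates. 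The subtle point is that after the induction we must land at tableaux $\d$ with $\d\gedom\t$, not merely $\d\gedom\v$; this is exactly what Lemma~\ref{technical} (Murphy's lemma) is designed to supply.

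More precisely, I would argue as follows. First expand $m_{\u\v}=\sum a_{\s\t'}m_{\s\t'}$ over the standard basis with terms controlled by dominance --- but in fact it is cleaner to stay with the $m$-basis throughout and use the defining formula $m_{\u\v}=T_{d(\u)}^* m_\bnu T_{d(\v)}$, so that
$$m_{\u\v}T_x = T_{d(\u)}^* m_\bnu T_{d(\v)}T_x.$$
If $\ell(d(\v)x)=\ell(d(\v))+\ell(x)$ then $T_{d(\v)}T_x=T_{d(\v)x}$ and $\v x$ is a row standard $\bnu$-tableau, so the right-hand side is $m_{\u,\v x}$ up to straightening $\v x$ into standard form; here one uses the Garnir-type relations (equivalently, the cellularity in Theorem~\ref{bases}) to rewrite $m_{\u,\v x}$ as a combination of $m_{\c\d}$ with $\d\gdom\v x$ and hence $\d\gedom\v$, and with $\c\gedom\u$ by a symmetric argument or by applying~$*$. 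When $\ell(d(\v)x)<\ell(d(\v))+\ell(x)$ one instead uses the quadratic relation $T_r^2=(\xi-1)T_r+\xi$ to split off a lower-length term, and recurses. The condition $\v\gedom\tmu$ enters to ensure that the tableaux appearing never "fall below" the reference tableau, so that the dominance bookkeeping closes up --- this is where Corollary~\ref{multipartitions} and the component/content estimates of Proposition~\ref{multicompositions} are invoked to keep all intermediate tableaux of the correct shapes.

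The main obstacle I anticipate is the interaction between \emph{straightening} (rewriting a row standard but non-standard tableau $\v x$ into the standard basis) and the \emph{length-additivity} hypothesis $\ell(d(\t)x)=\ell(d(\t))+\ell(x)$ that we need to feed into Lemma~\ref{technical}. When the quadratic relation produces a shorter permutation, one must check that the new permutation is still $\le d(\t)$ in the Bruhat order and still satisfies the requisite length-additivity against the (possibly new) tableau $\s$; this is precisely the content of Lemma~\ref{technical}, and the delicate part of the write-up will be verifying its hypotheses at each inductive step --- in particular that $\s\gdom\s w$ with $\ell(d(\s)w)=\ell(d(\s))+\ell(w)$ for the relevant $w$ arising from the reduced-word manipulation. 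Once that compatibility is in place, Lemma~\ref{technical} gives $\hat\v u\gedom\s w$ for all the tableaux produced, and combining this across the induction yields $\d\gedom\t$ and $\c\gedom\u$ as claimed. I would also remark that the $\xi=1$ case is handled by the same argument using the relations and seminormal action recorded from~\cite{AMR}, with the usual logarithmic adjustment to the content function.
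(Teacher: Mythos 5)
There is a real gap in your plan. The paper's proof works entirely inside the permutation module $x_\bnu\HSym$, with its basis $\set{x_\bnu T_{d(\b)}|\b\in\RStd(\bnu)}$ from \cite[Cor.~3.4]{M:Ulect}, and the inductive invariant it carries through the recursion (\ref{permutation}) is \emph{not} the conclusion of Lemma~\ref{technical2} but the finer statement that every term of $x_\bnu T_{d(\v)}T_x$ has the form $x_\bnu T_{d(\hat\v)u}$ with $\hat\v\in\RStd(\bnu)$, $\hat\v\gedom\v$, $u\le x$ and $\ell\bigl(d(\hat\v)u\bigr)=\ell\bigl(d(\hat\v)\bigr)+\ell(u)$. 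Only after all of $T_x$ has been applied is Lemma~\ref{technical} invoked, and with the \emph{fixed} choice $\s=\tmu$, $w=d(\t)$: since $d(\tmu)=1$ and $\tmu\gedom\t$, the hypotheses $\s\gdom\s w$ and $\ell(d(\s)w)=\ell(d(\s))+\ell(w)$ are automatic, and the nontrivial hypotheses are exactly the data about $\hat\v$ and $u$ recorded above, which yield $\hat\v u\gedom\t$; the straightening of $m_{\u\b}$ (for $\b$ merely row standard) into standard terms $m_{\c\d}$ via \cite[Prop.~3.18]{DJM:cyc} happens only at the very end. Your induction on $\ell(x)$ with straightening into the standard basis at each step destroys precisely this bookkeeping: once a term is rewritten as $m_{\c\d}$ with $\c,\d$ standard, all you retain is $\d\gedom\v$ (or $\d\gedom\t$ for the shorter word $x'$), and that is not enough to control $m_{\c\d}T_r$ at the next step, since $\d s_r$ need not dominate $\t$ and you no longer know which subword of $d(\t)$ has already been absorbed or that lengths still add. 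Your own remark that the delicate point is checking $\s\gdom\s w$ and length-additivity ``for the relevant $w$ arising from the reduced-word manipulation'' indicates the instantiation of Lemma~\ref{technical} has not been pinned down; in the correct application those conditions are trivial, and it is the $(\hat\v,u)$-conditions, which your scheme does not propagate, that do the work.

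Two further inaccuracies: the assertion that $\ell(d(\v)x)=\ell(d(\v))+\ell(x)$ forces $\v x$ to be row standard is false --- take $\v=\tnu$ and $x=s_r$ with $r,r+1$ in the same row of $\tnu$; lengths add but $\tnu s_r$ is not row standard, and in the permutation module this is the case $x_\bnu T_{d(\v)}T_r=\xi x_\bnu T_{d(\v)}$ of (\ref{permutation}). Also, Proposition~\ref{multicompositions}, Corollary~\ref{multipartitions} and the seminormal basis play no role here and would tie the argument to the split semisimple case, whereas Lemma~\ref{technical2} is later applied (in Theorem~\ref{psi mn}) over an arbitrary field; the paper's proof uses only (\ref{permutation}), Lemma~\ref{technical} and \cite[Prop.~3.18]{DJM:cyc}, and is valid without any semisimplicity assumption.
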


\begin{proof} Observe that $x_\bnu\HSym$ is the permutation module for $\HSym$, in the sense of
\cite[Chapt.~3]{M:Ulect}, which is indexed by the composition $\nu^\vee$ which is obtained by
concatenating the components of $\bnu$.  By
\cite[Cor.~3.4]{M:Ulect}, $\set{x_\bnu T_{d(\b)}|\b\in\RStd(\bnu)}$is a
basis of the $\HSym$-module $x_\bnu\HSym$.  Moreover, the same
result shows that if $1\le i<n$ then
\begin{equation}\label{permutation}
x_\bnu T_{d(\v)}T_i=\begin{cases}
    \xi x_\bnu T_{d(\v)},&\text{if }\v s_i\notin\RStd(\bnu),\\
    \phantom{\xi}x_\bnu T_{d(\v)s_i},&\text{if }\v\gdom\v s_i\in\RStd(\bnu),\\
    \xi x_\bnu T_{d(\v)s_i}+(\xi-1) x_\bnu T_{d(\v)},&\text{if }\v\ldom\v s_i\in\RStd(\bnu).
\end{cases}
\end{equation}
We note that if $\v s_i\in\RStd(\bnu)$ then either $\v\gdom\v s_i$ or $\v s_i\gdom\v$.  Applying
(\ref{permutation}) recursively, we see that $x_\bnu T_{d(\v)}T_x$ is a linear combination of terms of
the form $x_\bnu T_{d(\b)}$ where $\b=\tnu d(\hat\v)u\in\RStd(\bnu)$, $\hat\v$ is a row standard
$\bnu$-tableau such that $\hat\v\gedom\v$, $u\leq x\leq d(\t)$ and
$\ell(d(\hat\v)u)=\ell(d(\hat\v))+\ell(u)$. Thus, we have $\hat\v\in\RStd(\bnu)$,
$\b=\hat{\v}u\in\RStd(\bnu)$ and $\hat\v\gedom\v\gedom\tmu$. Hence, setting $\s=\tmu$ and $w=d(\t)$ we
see that all of the conditions in Murphy's Lemma~\ref{technical} are satisfied so that
$\b=\hat\v u\gedom\s w=\t$. That is, $x_\bnu T_{d(\v)}T_{x}$ can be written as a linear combination of
$x_\bnu T_{d(\b)}$ where $\b\in\RStd(\bnu)$ and $\b\gedom\t$.

We have shown that $m_{\u\v}T_x$ can we written as a linear combination of
terms of the form $m_{\u\b}$, where $\b\gedom\t$ is row standard. Hence, by
\cite[Prop.~3.18]{DJM:cyc} we can write $m_{\u\v}T_x$ as a linear
combination of elements of the form $m_{\c\d}$ where
$\c\gedom\u$ and $\d\gedom\t$. (Note that the standard tableaux $\c$
and $\d$ do not necessarily have shape $\bmu$.) This completes the proof.
\end{proof}

\begin{Theorem}\label{strong}
Suppose that $R=\K$ and $(\s,\t)\in\SStd(\Parts)$. Then:
\begin{enumerate}
\item There exist scalars $a_{\u\v}\in\K$ such that
$$m_{\s\t}=f_{\s\t}+\Sum_{\substack{(\u,\v)\in\SStd(\Parts)\\(\u,\v)\Gdom(\s,\t)}}a_{\u\v}\,f_{\u\v}.$$
\item There exist scalars $b_{\u\v}\in\K$ such that
 $$f_{\s\t}=m_{\s\t}+\Sum_{\substack{(\u,\v)\in\SStd(\Parts)\\(\u,\v)\Gdom(\s,\t)}}b_{\u\v}\,m_{\u\v}.$$
\end{enumerate}
\end{Theorem}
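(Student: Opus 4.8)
The plan is to reduce the general statement $m_{\s\t} = f_{\s\t} + \sum a_{\u\v} f_{\u\v}$ (with the sum over $(\u,\v)\Gdom(\s,\t)$) to the already-established base case $m_\bmu = \sum_{(\u,\v)\Gedom(\tmu,\tmu)} a_{\u\v} f_{\u\v}$ from Corollary~\ref{multipartitions}, by transporting that expansion across the action of $T_{d(\s)}^*$ on the left and $T_{d(\t)}$ on the right. Write $\s,\t\in\Std(\bmu)$, so $m_{\s\t} = T_{d(\s)}^* m_\bmu T_{d(\t)}$. Substituting the expansion of $m_\bmu$ and using $*$-invariance, it suffices to control $f_{\u\v}T_{d(\t)}$ for $(\u,\v)\Gedom(\tmu,\tmu)$, i.e. for $\v\gedom\tmu$; the left multiplication by $T_{d(\s)}^*$ is then handled by applying the $*$-anti-automorphism and repeating the argument with the roles of rows and columns swapped. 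So the heart of the matter is: \emph{if $\v\gedom\tmu$ then $f_{\u\v}T_{d(\t)}$ is a linear combination of $f_{\c\d}$ with $\c\gedom\u$ and $\d\gedom\t$, and the coefficient of $f_{\u\t}$ is $1$.}

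The key observation is that the $f$-basis and $m$-basis have the \emph{same} unitriangular transition behaviour with respect to $\gdom$ (this is (\ref{Murphy}) together with the remark after Definition~\ref{seminormal basis}), so the already-proven ungraded combinatorial estimate, Lemma~\ref{technical2}, which says $m_{\u\v}T_x$ is a combination of $m_{\c\d}$ with $\c\gedom\u$, $\d\gedom\t$ whenever $\v\gedom\tmu$ and $x\le d(\t)$, transfers verbatim to the $f$-basis. Concretely: expand $m_{\u\v} = \sum_{(\u',\v')\gedom(\u,\v)} c_{\u'\v'} f_{\u'\v'}$ and invert to express $f_{\u\v}$ as a $\gedom$-unitriangular combination of the $m_{\u'\v'}$; apply Lemma~\ref{technical2} to each $m_{\u'\v'}T_{d(\t)}$ (noting $\v'\gedom\v\gedom\tmu$); then re-expand the resulting $m_{\c\d}$'s back into the $f$-basis, again $\gedom$-unitriangularly. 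Tracking the unit coefficient through this chain — $f_{\u\v}\mapsto m_{\u\v}$ modulo strictly-dominant terms, $m_{\u\v}T_{d(\t)}$ has leading term governed by the identity component of the recursion in Lemma~\ref{technical2} — gives that $f_{\u\v}T_{d(\t)} = f_{\u\t} + (\text{combination of }f_{\c\d}\text{ with }(\c,\d)\Gdom(\u,\t))$ when $\v=\tmu$ and $\u\gedom\tmu$, and lies in the span of such strictly-dominant $f_{\c\d}$ otherwise.

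Assembling: $m_{\s\t} = T_{d(\s)}^* m_\bmu T_{d(\t)} = T_{d(\s)}^*\bigl(f_{\tmu\tmu} + \sum_{(\u,\v)\Gdom(\tmu,\tmu)} a_{\u\v}f_{\u\v}\bigr)T_{d(\t)}$. The term $T_{d(\s)}^* f_{\tmu\tmu} T_{d(\t)}$ produces $f_{\s\t}$ plus strongly-dominant terms by two applications of the boxed claim (one on each side, the second via $*$), and every other term lies in the strong-dominance span by the same claim since $\u,\v\gedom\tmu$ forces $\c,\d\Gdom$ things $\gedom \s,\t$ after the unitriangular step. This proves (a). Part~(b) is then immediate: since both $\{m_{\u\v}\}$ and $\{f_{\u\v}\}$ are bases and (a) expresses the change of basis as a matrix that is unitriangular with respect to the order $\Gedom$ (restricted to a single multipartition's worth of tableaux in the leading position, then padded out), its inverse is also $\Gedom$-unitriangular, giving the stated expansion of $f_{\s\t}$ in terms of $m_{\s\t}$ and strongly-dominant $m_{\u\v}$.

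The main obstacle I anticipate is the bookkeeping in the middle step: verifying that the coefficient of $f_{\u\t}$ in $f_{\u\v}T_{d(\t)}$ is exactly $1$ when $\v = \tmu$, not merely nonzero, and — more delicately — checking that when one pulls $f_{\u\v}$ back into the $m$-basis, applies Lemma~\ref{technical2}, and pushes forward again, no ``collapse'' occurs that would move a term back down below $(\s,\t)$ in the order. This requires that the two orders $\gedom$ (on $\SStd$) and $\Gedom$ interact correctly with the one-sided estimates — precisely the distinction the paper has been emphasizing — so some care is needed to see that Lemma~\ref{technical}/\ref{technical2}, which are phrased with $\gedom$ on individual tableaux, really do deliver $\Gdom$-control on pairs. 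Once that is pinned down, the rest is formal linear algebra.
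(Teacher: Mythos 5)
The heart of your argument---transferring Lemma~\ref{technical2} from the $m$-basis to the $f$-basis by a round trip through the triangularity (\ref{Murphy})---has a genuine gap, and it is exactly the $\gedom$ versus $\Gedom$ issue you flag at the end but do not resolve. The triangularity (\ref{Murphy}) between the two bases is only with respect to the weak order $\gedom$ on pairs, and when $(\u',\v')\gdom(\u,\v)$ because $\Shape(\u')\gdom\Shape(\u)$ it carries no componentwise information: $\v'\gedom\v$ can fail, so the hypothesis $\v'\gedom\tmu$ of Lemma~\ref{technical2} is simply not available for those terms of your inverted expansion. Worse, even for the terms where the lemma does apply, re-expanding the resulting $m_{\c\d}$ back into the $f$-basis again only gives weak-dominance control on the new indexing pairs, so the most this round trip can deliver is that $m_{\s\t}$ is an $f$-linear combination of pairs weakly dominating $(\s,\t)$---which is already known from (\ref{Murphy}) and is strictly weaker than the strong-dominance statement~(a). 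To keep componentwise control through the two base changes you would need precisely the $\Gedom$-unitriangularity of the transition matrix that Theorem~\ref{strong} asserts, so the argument as written is circular; this is the same $\gedom$/$\Gedom$ slip that the paper's Remark attributes to \cite{M:tilting}.

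For comparison, the paper avoids ever having to control $f_{\u\v}T_{d(\t)}$ inside the $f$-basis. It proves part~(b) first, by induction on $\Gedom$: it writes $f_{\s\t}=\Phi_\s^{*}f_{\tmu\tmu}\Phi_\t$ with $\Phi_\t=\sum_{b\le d(\t)}p_{\t b}T_b\in\HSym$ (from \cite{M:gendeg}), expands $f_{\tmu\tmu}$ in the $m$-basis with strong-dominance control coming from Corollary~\ref{multipartitions} and the inductive hypothesis, and then applies Lemma~\ref{technical2} to the products $m_{\u\v}T_b$---legitimately, because every pair $(\u,\v)$ occurring strongly dominates $(\tmu,\tmu)$, so $\v\gedom\tmu$ holds. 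Part~(a) is then obtained by inverting a $\Gedom$-unitriangular system, which is the formal step you also use (in the opposite direction) and which is fine. If you want to keep your direction, you would need a seminormal analogue of Lemma~\ref{technical2} proved directly from the action formulas for $T_i$ on the $f$-basis (as in the proof of Proposition~\ref{multicompositions}), not deduced from the $m$-basis via (\ref{Murphy}); your unit-coefficient claim for $f_{\u\tmu}T_{d(\t)}$ is plausible, but the dominance control is the part that must be argued from scratch.
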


\begin{proof} We first prove part~(b) using induction on~$\Gedom$. If $\bEta=((n),(0),\dots,(0))$
then, directly from the definitions, $f_{\teta\teta}=m_{\teta\teta}=m_\bEta$. Hence,~(b) is
automatically true in this case.  Notice also that $(\teta,\teta)\Gedom(\s,\t)$, for all
$(\s,\t)\in\SStd(\Parts)$.  Suppose now that $(\s,\t)\in\SStd(\bmu)$ and $(\s,\t)\ne(\teta,\teta)$.
Then, by Corollary~\ref{multipartitions} and induction, there exist scalars $a_{\u\v}\in\K$ such that
$$f_{\tmu\tmu}=m_\mu+\sum_{(\u,\v)\Gdom(\tmu,\tmu)}a_{\u\v}m_{\u\v}.$$ Suppose that
$(\s,\t)\in\SStd(\bmu)$. By
\cite[Proposition~4.1 and Lemma~4.3]{M:gendeg}, there exists elements
$\Phi_\s,\Phi_\t\in\HSym$ such that
$f_{\s\t}=\Phi_\s^*f_{\tmu\tmu}\Phi_\t$. (In \cite{M:gendeg} this is proved
only in the case when $\xi \ne1$. The case when $\xi =1$ follows by exactly the
same argument.) Therefore, by the last displayed equation,
$$f_{\s\t}=\Phi^*_\s f_{\tmu\tmu}\Phi_\t
          =\sum_{(\u,\v)\Gedom(\tmu,\tmu)}a_{\u\v}\Phi^*_\s m_{\u\v}\Phi_\t,$$
where for convenience we set $a_{\tmu\tmu}=1$.  By the argument
of~\cite[Proposition~4.1(ii)]{M:gendeg}, $\Phi_\t=\sum_{b\le d(\t)}p_{\t b}T_b$, for some
$p_{\t b}\in\K$ where, in the sum, $b\in\Sym_n$ (with $p_{\t d(\t)}=1$). By
Lemma~\ref{technical2} we can write $m_{\u\v}T_b$ as a linear combination of elements of the
form $m_{\c\d}$ with $\c\gedom\u$ and $\d\gedom\t$. Hence, we can write $m_{\u\v}\Phi_\t$ as a
linear combination of terms $m_{\c\d}$ with $(\c,\d)\Gedom(\tmu,\t)$. Applying the
left handed version of Lemma~\ref{technical2} to each of the terms $\Phi_\s^* m_{\c\d}$, we
see that each $\Phi_\s^* m_{\u\v}\Phi_\t$ can be written as a linear combination of elements
of the form $m_{\a\b}$ with $(\a,\b)\Gedom(\s,\t)$.  Hence, $f_{\s\t}$ can be written as a
linear combination of elements $m_{\a\b}$ with $(\a,\b)\Gedom(\s,\t)$ giving~(b). Inverting
the equations in~(b) gives~(a), completing the proof.
\end{proof}

We can now prove the promised criterion for the product $m_{\s\t}n_{\u\v}$ to be
non-zero.  Notice that unlike Theorem~\ref{strong}, which requires $R=\K$, the next
results are valid over an arbitrary integral domain.

\begin{Cor}\label{tilting}
Suppose that $R$ is an integral domain and that $(\s,\t),(\u,\v)\in\SStd(\Parts)$. Then:
\begin{enumerate}
\item  $m_{\s\t}n_{\u\v}\ne0$ only if $\u'\gedom\t$, and,
\item  $n_{\u\v}m_{\s\t}\ne0$ only if $\v'\gedom\s$.
\end{enumerate}
\end{Cor}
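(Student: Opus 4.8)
The plan is to reduce everything to the semisimple case by a specialization argument, and then to use Theorem~\ref{strong} together with the orthogonality of the seminormal basis. First I would dispose of part~(b): since $n_{\u\v}^* = n_{\v\u}$ and $m_{\s\t}^* = m_{\t\s}$, we have $(n_{\u\v}m_{\s\t})^* = m_{\t\s}n_{\v\u}$, so part~(b) follows from part~(a) applied to the pair $(\t,\s)$ and $(\v,\u)$ (noting $\u'\gedom\t$ becomes $\v'\gedom\s$ after the relabelling). So it suffices to prove~(a).

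For~(a), the standard trick is to work over a ``generic'' ground ring and specialize. Concretely, let $\O$ be a suitable discrete valuation ring (or a localization of a polynomial ring) with field of fractions $\K$ such that $\HK = \HH^\K_n$ is semisimple and $\HL$, or rather $\H = \HH^R_n$, is obtained from $\HO = \HH^\O_n$ by base change $R \otimes_\O -$; this is exactly the kind of modular system used in~\cite{HM:GradedCellular} and~\cite[\Sect4.2]{HM:GradedCellular}. The elements $m_{\s\t}$ and $n_{\u\v}$ are defined integrally (over $\O$, indeed over $\Z[\xi^{\pm1},\bQ]$), so it is enough to prove that $m_{\s\t}n_{\u\v} = 0$ in $\HO$ whenever $\u'\not\gedom\t$, and for that it suffices to prove it in $\HK$. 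Thus we are reduced to the semisimple case $R = \K$.

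Now assume $R = \K$ and $\HK$ is semisimple. By Theorem~\ref{strong}(a), $m_{\s\t} = f_{\s\t} + \sum_{(\a,\b)\Gdom(\s,\t)} a_{\a\b}f_{\a\b}$, so every $f_{\a\b}$ appearing satisfies $\b\gedom\t$. By the sign-analogue of Theorem~\ref{strong} --- which follows from part~(b) applied to the algebra with $\xi$ replaced by a generic parameter and the automorphism interchanging $\{m_{\s\t}\}$ and $\{n_{\u\v}\}$, as in the proof of Theorem~\ref{bases}(b), or directly by the conjugation symmetry --- one has $n_{\u\v} = g_{\u\v} + \sum_{(\c,\d)\Gdom(\u,\v)} b_{\c\d}g_{\c\d}$, where $\{g_{\u\v}\}$ is the seminormal basis attached to $n_\bmu$; here the relevant fact is that $g_{\c\d}$ is built from the idempotents $F'_\c = \prod_k \prod_{\cont_\s(k) \ne \cont_{\c'}(k)} \frac{L_k - \cont_\s(k)}{\cont_{\c'}(k)-\cont_\s(k)}$ associated to the \emph{conjugate} contents, so that $g_{\c\d} = F'_\c n_{\c\d} F'_\d$ and the $F'$'s are orthogonal idempotents with $F_\a F'_\c \ne 0$ iff $\cont_\a(k) = \cont_{\c'}(k)$ for all $k$, i.e.\ iff $\a = \c'$. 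Every $g_{\c\d}$ occurring in $n_{\u\v}$ has $\c\gedom\u$, equivalently $\c'\ledom\u'$. Then
$$m_{\s\t}n_{\u\v} = \sum_{\substack{(\a,\b):\,\b\gedom\t \\ (\c,\d):\,\c'\ledom\u'}} a_{\a\b}b_{\c\d}\, f_{\a\b}g_{\c\d},$$
and $f_{\a\b}g_{\c\d} = f_{\a\b}F_\b F'_\c g_{\c\d}$ is nonzero only when $\b = \c'$. So a nonzero term forces $\c' = \b \gedom \t$ and $\c'\ledom\u'$, hence $\t \ledom \c' \ledom \u'$, i.e.\ $\u'\gedom\t$. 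This proves~(a), and part~(b) follows as above.

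The main obstacle I anticipate is pinning down the exact orthogonality relation between the two families of seminormal idempotents $F_\t$ (adapted to $m$) and $F'_\u$ (adapted to $n$) --- specifically verifying that $F_\t F'_\u \ne 0$ if and only if $\u = \t'$, which rests on comparing $\cont_\t$ with the conjugate content function and on the semisimplicity hypothesis guaranteeing that distinct tableaux are separated by their content sequences. Everything else (the specialization reduction and the bookkeeping with $\gedom$) is routine once Theorem~\ref{strong} and its sign-twisted analogue are in hand; the only care needed is to make sure the sign-twisted version of Theorem~\ref{strong} is legitimately available, which it is by the symmetry argument already invoked for Theorem~\ref{bases}(b).
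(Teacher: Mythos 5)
Your argument is correct and is essentially the paper's own proof: reduce part~(b) to part~(a) via the $*$-anti-isomorphism, pass to the semisimple case through a modular system, expand $m_{\s\t}$ in the $f$-basis and $n_{\u\v}$ in the conjugate seminormal basis $g_{\c\d}=F_{\c'}n_{\c\d}F_{\d'}$ using Theorem~\ref{strong} and its sign-twisted analogue, and conclude from $f_{\a\b}g_{\c\d}\ne0\Rightarrow\b=\c'$ that $\u'\gedom\c'=\b\gedom\t$. The only cosmetic difference is that you re-derive the key orthogonality $f_{\a\b}g_{\c\d}\ne0$ only if $\c'=\b$ from the pairwise orthogonality of the idempotents $F_\t$ in the semisimple case, whereas the paper simply cites \cite[Corollary~3.8]{M:gendeg} for this fact.
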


\begin{proof}
  Parts~(a) and (b) are equivalent by applying the anti-isomorphism~$*$ of $\H$ which fixes each
  generator, so we prove only~(a).  Let $K$ be the field of fractions of $R$. Then by embedding $\H^R$
  into $\H^K$ and choosing a suitable modular system $(\O,\K,K)$ (see \cite[\Sect4.2]{HM:GradedCellular}
  for example), we can reduce to the case where $R=\K$.  Following \cite{M:gendeg}, if
  $(\u,\v)\in\SStd(\Parts)$ then define $g_{\u\v}=F_{\u'}n_{\u\v} F_{\v'}$. Then by repeating the
  arguments of Theorem~\ref{strong} (or by applying a suitable automorphism in the generic case as in
  the proof of \cite[Prop.~3.4]{M:gendeg}), it follows that
  $$n_{\u\v} = g_{\u\v}+\sum_{(\c,\d)\Gdom(\u,\v)}d_{\c\d}g_{\c\d},$$
  for some scalars $d_{\c\d}\in\K$. Hence, by Theorem~\ref{strong} we
  have
  $$0\ne m_{\s\t}n_{\u\v}=\Big(f_{\s\t}+\sum_{(\a,\b)\Gdom(\s,\t)}c_{\a\b}f_{\a\b}\Big)
      \Big(g_{\u\v}+\sum_{(\c,\d)\Gdom(\u,\v)}d_{\c\d}g_{\c\d}\Big).$$
  Hence,  there exist tableaux $(\a,\b)\Gedom(\s,\t)$ and
  $(\c,\d)\Gedom(\u,\v)$ such that $f_{\a\b}g_{\c\d}\ne0$. By
  \cite[Corollary~3.8]{M:gendeg}, $f_{\a\b}g_{\c\d}\ne0$ only if $\c'=\b.$
  Therefore, $\u'\gedom\c'=\b\gedom\t$, so that $\u'\gedom\t$ as required.
\end{proof}

\begin{Remark}
Both the statement and proof of part~(b) of Corollary~\ref{tilting} is
essentially the same as \cite[Lemma~5.4]{M:tilting}. Unfortunately,
in~\cite{M:tilting} the second author confused the two partial orders~$\gedom$
and~$\Gedom$ on~$\SStd(\Parts)$, so all that was actually proved in that
paper was that $\Shape(\u')\gedom\Shape(\t)$ whenever
$m_{\s\t}n_{\u\v}\ne0$. As a consequence, the current paper completes the proof of
\cite[Lemma 5.8]{M:tilting} which requires the full strength of
Corollary~\ref{tilting} (and Lemma \ref{lm22}).
\end{Remark}

\begin{Cor}\label{L_k action}
Suppose that $R$ is an integral domain, $1\le k\le n$ and $(\s,\t)\in\SStd(\Parts)$. Then there exist
scalars $c_{\u\v}\in R$ such that
$$m_{\s\t}L_k=\cont_\t(k)m_{\s\t}+\sum_{\substack{(\u,\v)\in\SStd(\Parts)\\(\u,\v)\Gdom(\s,\t)}}
                 c_{\u\v}m_{\u\v}.$$
\end{Cor}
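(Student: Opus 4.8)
The plan is to upgrade the Murphy-type relation \eqref{Murphy}, which controls the action of $L_k$ on $m_{\s\t}$ only up to the weak dominance order $\gdom$, to the strong dominance order $\Gdom$. First I would reduce to the case where $R=\K$ is a field with $\HK$ semisimple: since $m_{\s\t}L_k$ and the $m_{\u\v}$ lie in the integral form $\H^R$, and $\H^R$ embeds in $\H^\K$ for a suitable modular system $(\O,\K,K)$ as in \cite[\Sect4.2]{HM:GradedCellular}, it suffices to prove the expansion over $\K$ and then specialise; the coefficients $c_{\u\v}$ obtained over $\K$ automatically lie in $R$ because $\{m_{\u\v}\}$ is an $R$-basis of $\H^R$ and the left-hand side lies in $\H^R$.

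So assume $R=\K$ and $\HK$ is semisimple. By Theorem~\ref{strong}(a) write $m_{\s\t}=f_{\s\t}+\sum_{(\a,\b)\Gdom(\s,\t)}a_{\a\b}f_{\a\b}$, and recall from \eqref{eigenvectors} that each seminormal basis element is an $L_k$-eigenvector: $f_{\a\b}L_k=\cont_\b(k)f_{\a\b}$. Therefore
$$m_{\s\t}L_k=\cont_\t(k)f_{\s\t}+\sum_{(\a,\b)\Gdom(\s,\t)}a_{\a\b}\cont_\b(k)f_{\a\b}.$$
Now substitute $f_{\s\t}=m_{\s\t}-\sum_{(\a,\b)\Gdom(\s,\t)}a_{\a\b}f_{\a\b}$ in the first term, so that
$$m_{\s\t}L_k=\cont_\t(k)m_{\s\t}+\sum_{(\a,\b)\Gdom(\s,\t)}a_{\a\b}\bigl(\cont_\b(k)-\cont_\t(k)\bigr)f_{\a\b}.$$
Finally apply Theorem~\ref{strong}(b) to each $f_{\a\b}$ with $(\a,\b)\Gdom(\s,\t)$, writing it as $m_{\a\b}$ plus a $\K$-combination of $m_{\c\d}$ with $(\c,\d)\Gdom(\a,\b)$; since $\Gedom$ is transitive, every resulting $m_{\c\d}$ satisfies $(\c,\d)\Gdom(\s,\t)$, and collecting coefficients gives the claimed expansion.

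I do not anticipate a genuine obstacle here: the statement is a direct corollary of Theorem~\ref{strong} together with the eigenvector property \eqref{eigenvectors}, and the only point needing a little care is the reduction to the semisimple field case and the observation that the scalars, a priori in $\K$, actually lie in $R$ — this is routine once one notes that passing to a field of fractions and then to a discrete valuation ring $\O$ lying over it does not change the (uniquely determined) structure constants of the $R$-basis $\{m_{\u\v}\}$. Everything else is bookkeeping with the transitive partial order $\Gedom$.
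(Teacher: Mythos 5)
Your proposal is correct and follows essentially the same route as the paper: reduce to the split semisimple case over $\K$ via a suitable modular system, expand $m_{\s\t}$ in the seminormal basis using Theorem~\ref{strong}(a), use the eigenvector property (\ref{eigenvectors}), and convert back with Theorem~\ref{strong}(b), invoking transitivity of $\Gedom$. Your explicit remark that the coefficients land in $R$ because $\{m_{\u\v}\}$ is an $R$-basis of $\H^R$ is a welcome (if routine) point that the paper leaves implicit.
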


\begin{proof}
  As in the proof of Corollary~\ref{tilting} it is enough to consider the case when $R=\K$ and $\HK$ is
  semisimple. Using parts~(a) and~(b) of Theorem~\ref{strong}, to switch between the standard and
  seminormal bases, together with (\ref{eigenvectors}) for the second equality we see that
\begin{align*} m_{\s\t}L_k
   &=\Big(f_{\s\t}+\sum_{\substack{(\u,\v)\in\SStd(\Parts)\\(\u,\v)\Gdom(\s,\t)}}a_{\u\v}f_{\u\v}\Big)L_k\\
   &=\cont_\t(k)f_{\s\t}+\sum_{\substack{(\u,\v)\in\SStd(\Parts)\\(\u,\v)\Gdom(\s,\t)}}
               \cont_\v(k)a_{\u\v}\,f_{\u\v}\\
   &=\cont_\t(k)m_{\s\t}+\sum_{\substack{(\u,\v)\in\SStd(\Parts)\\(\u,\v)\Gdom(\s,\t)}}
               \cont_\v(k)c_{\u\v}\,m_{\u\v},\\
\end{align*}
for some $c_{\u\v}\in\K$. This completes the proof.
\end{proof}

We remark that it is not hard to see that Corollary~\ref{multipartitions},
Corollary~\ref{L_k action}  and the two statements in Theorem~\ref{strong} are all,
in fact, equivalent. There are analogous (equivalent) statements for the basis
$\{n_{\s\t}\}$. We leave the details to the interested reader.

Similarly, one can show that  $n_{\s\t}L_k=\cont_{\t'}(k)n_{\s\t}$ plus a linear combination of terms
$n_{\u\v}$ with $(\u,\v)\Gdom(\s,\t)$.

\section{Cyclotomic Quiver Hecke algebras and graded Specht modules} We now switch to the
cyclotomic quiver Hecke algebras, which were introduced in a series of papers by
Khovanov-Lauda~\cite{KhovLaud:diagI}, Rouquier~\cite{Rouq:2KM} and Brundan and
Kleshchev~\cite{BK:GradedKL}. These algebras are certain naturally graded algebras which depend on
$e$. As we recall, when they are defined over suitable fields, they are isomorphic to the cyclotomic
Hecke algebras of the last section if we take $e$ to be equal to the quantum characteristic of the
parameter $\xi$ and choose appropriate parameters.

\subsection{Cyclotomic Quiver Hecke algebras and graded induction}\label{grading}
Recall from the introduction that we have fixed an integer $e\in\{0,2,3,4,\dots\}$ and that
$I=\Z/e\Z$. Let $\Gamma$ be the oriented quiver with vertex set~$I$ and directed edges
$i\longrightarrow i+1$, for $i\in I$. To the quiver $\Gamma$ we attach the standard Lie
theoretic data of a Cartan matrix $(a_{ij})_{i,j\in I}$, fundamental weights
$\set{\Lambda_i|i\in I}$, positive weights $X^+=\sum_{i\in I}\N\Lambda_i$,  positive roots
$Q_+=\bigoplus_{i\in I}\N\alpha_i$ and we let $(\cdot,\cdot)$ be the bilinear form determined by
$$(\alpha_i,\alpha_j)=a_{ij}\qquad\text{and}\qquad
          (\Lambda_i,\alpha_j)=\delta_{ij},\qquad\text{for }i,j\in I.$$
More details can be found, for example, in ~\cite[Chapt.~1]{Kac}.

\begin{Defn}\label{quiver Hecke relations}
  The \textbf{cyclotomic quiver Hecke algebra}, or \textbf{cyclotomic Khovanov-Lauda--Rouquier algebra},
$\R$ of weight $\Lambda$ and type $\Gamma_e$ is the unital associative $R$-algebra with generators
  $$\{\psi_1,\dots,\psi_{n-1}\} \cup
         \{ y_1,\dots,y_n \} \cup \set{e(\bi)|\bi\in I^n}$$
  and relations
\begin{align*}
y_1^{(\Lambda,\alpha_{i_1})}e(\bi)&=0,
& e(\bi) e(\bj) &= \delta_{\bi\bj} e(\bi),
&{\textstyle\sum_{\bi \in I^n}} e(\bi)&= 1,\\
y_r e(\bi) &= e(\bi) y_r,
&\psi_r e(\bi)&= e(s_r{\cdot}\bi) \psi_r,
&y_r y_s &= y_s y_r,\\
\end{align*}
\vskip-38pt
\begin{align*}
\psi_r y_s  &= y_s \psi_r,&\text{if }s \neq r,r+1,\\
\psi_r \psi_s &= \psi_s \psi_r,&\text{if }|r-s|>1,\\
\end{align*}
\vskip-30pt
\begin{align*}
  \psi_r y_{r+1} e(\bi) &= \begin{cases}
      (y_r\psi_r+1)e(\bi),\hspace*{18mm} &\text{if $i_r=i_{r+1}$},\\
    y_r\psi_r e(\bi),&\text{if $i_r\neq i_{r+1}$}
  \end{cases} \\
  y_{r+1} \psi_re(\bi) &= \begin{cases}
      (\psi_r y_r+1) e(\bi),\hspace*{18mm} &\text{if $i_r=i_{r+1}$},\\
    \psi_r y_r e(\bi), &\text{if $i_r\neq i_{r+1}$}
  \end{cases}\\
  \psi_r^2e(\bi) &= \begin{cases}
       0,&\text{if $i_r = i_{r+1}$},\\
      e(\bi),&\text{if $i_r \ne i_{r+1}\pm1$},\\
      (y_{r+1}-y_r)e(\bi),&\text{if  $e\ne2$ and $i_{r+1}=i_r+1$},\\
       (y_r - y_{r+1})e(\bi),&\text{if $e\ne2$ and  $i_{r+1}=i_r-1$},\\
      (y_{r+1} - y_{r})(y_{r}-y_{r+1}) e(\bi),&\text{if $e=2$ and
$i_{r+1}=i_r+1$}
\end{cases}\\
\psi_{r}\psi_{r+1} \psi_{r} e(\bi) &= \begin{cases}
    (\psi_{r+1} \psi_{r} \psi_{r+1} +1)e(\bi),\hspace*{7mm}
       &\text{if $e\ne2$ and $i_{r+2}=i_r=i_{r+1}-1$},\\
  (\psi_{r+1} \psi_{r} \psi_{r+1} -1)e(\bi),
       &\text{if $e\ne2$ and $i_{r+2}=i_r=i_{r+1}+1$},\\
  \big(\psi_{r+1} \psi_{r} \psi_{r+1} +y_r\\
  \qquad -2y_{r+1}+y_{r+2}\big)e(\bi),
    &\text{if $e=2$ and $i_{r+2}=i_r=i_{r+1}+1$},\\
  \psi_{r+1} \psi_{r} \psi_{r+1} e(\bi),&\text{otherwise.}
\end{cases}
\end{align*}
for $\bi,\bj\in I^n$ and all admissible $r$ and $s$. Moreover, $\R$ is naturally
$\Z$-graded with degree function determined by
$$\deg e(\bi)=0,\qquad \deg y_r=2\qquad\text{and}\qquad \deg
  \psi_s e(\bi)=-a_{i_s,i_{s+1}},$$
for $1\le r\le n$, $1\le s<n$ and $\bi\in I^n$.
\end{Defn}

To make the link between the cyclotomic Hecke algebra $\H$ and the quiver Hecke algebra $\R$ suppose
that $R=K$ is a field of characteristic~$p\ge0$. Fix non-zero element $\xi$ of~$K$ and let
$e\in\{0,2,3,4,\dots\}$ is the quantum characteristic of $\xi$. As noted after Definition~\ref{hecke},
if the parameters $\bQ$ are integral then the algebra $\H=\H(e,\charge)$ is determined by $e$ and the
multicharge $\charge$. Let $\Lambda=\Lambda_\charge\in X^+$ be the unique positive weight such that
$$(\Lambda,\alpha_i) = \#\set{1\le l\le\ell | \kappa_l\equiv i\pmod e},
     \qquad\text{ for all }i\in I.$$
We define $\HL=\H(e,\charge)$, where $\Lambda=\Lambda_\charge$.

Next observe that (\ref{Murphy}) together with Theorem~\ref{bases} implies that $\HL$ decomposes into
a direct sum of (simultaneous) generalized eigenspaces for the elements $L_1,\dots,L_n$. Moreover, the
possible eigenvalues for $L_1,\dots,L_n$ are precisely the integers in $I$, if $\xi=1$, and otherwise
they are belong to the set $\set{\xi^i|i\in I}$, if $\xi\ne1$. Hence, the generalized eigenspaces for
these elements are indexed by $I^n$. For each $\bi\in I^n$ let $e(\bi)$ be the
corresponding idempotent in~$\HL$ (or zero if the corresponding eigenspace is zero).

\begin{Theorem}[Brundan-Kleshchev]\label{BK iso}
Suppose that $R=K$ is a field, $\xi\in K$ as above, and that $\Lambda=\Lambda_\charge$. Then
there is an isomorphism of algebras $\R\cong\HL$ which sends
$e(\bi)\mapsto e(\bi)$, for all $\bi\in I^n$ and
\begin{align*}
y_r&\mapsto\begin{cases}
  \Sum_{\bi\in I^n}(1 - \xi^{-i_r}L_r)e(\bi),&\text{if }\xi\ne1,\\[5mm]
  \Sum_{\bi\in I^n}(L_r - i_r)e(\bi),&\text{if }\xi=1.
\end{cases}\\
\psi_s&\mapsto \sum_{\bi\in I^n}(T_r+P_r(\bi))Q_r(\bi)^{-1}e(\bi),
\end{align*}
where $P_r(\bi),Q_r(\bi)\in R[y_r,y_{r+1}]$, for $1\le r\le n$ and $1\le s<n$.
\end{Theorem}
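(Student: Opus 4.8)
This is a theorem of Brundan and Kleshchev \cite{BK:GradedKL}; the plan is to recall the strategy of their proof. The idempotents $e(\bi)$ have already been defined before the statement, using that (\ref{Murphy}) together with Theorem~\ref{bases} shows $\HL$ to be the direct sum of the simultaneous generalised eigenspaces of $L_1,\dots,L_n$, where the eigenvalue of $L_r$ on a given block lies in $\set{\xi^i|i\in I}$ if $\xi\ne1$ and in $I$ if $\xi=1$. So the first step is to define the images of $y_r$ and $\psi_r$ by the displayed formulas: the polynomials $P_r(\bi),Q_r(\bi)\in R[y_r,y_{r+1}]$ must be chosen case-by-case, according to whether $i_r=i_{r+1}$, $i_r\ne i_{r+1}\pm1$, or $i_{r+1}=i_r\pm1$, and always so that $Q_r(\bi)$ has invertible constant term. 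Since the displayed formula for $y_r$ makes $y_re(\bi)$ act nilpotently on the block $e(\bi)\HL$ --- because $e(\bi)$ projects onto a generalised eigenspace of $L_r$ --- the element $Q_r(\bi)e(\bi)$ is then a unit in $e(\bi)\HL e(\bi)$, so $\psi_r$ is well defined.

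Next I would check the defining relations of $\R$. Those not involving $\psi_r^2$ or the length-three braid relation are routine translations of the relations in Definition~\ref{hecke}: $y_ry_s=y_sy_r$ because the $L_k$ commute; the cyclotomic relation $y_1^{(\Lambda,\alpha_{i_1})}e(\bi)=0$ follows from $(L_1-Q_1)\cdots(L_1-Q_\ell)=0$ on noting that exactly $(\Lambda,\alpha_{i_1})$ of the factors $(L_1-Q_l)e(\bi)$ are scalar multiples of $y_1e(\bi)$ while the rest are invertible on $e(\bi)\HL$; the relations $\psi_re(\bi)=e(s_r{\cdot}\bi)\psi_r$, $\psi_ry_s=y_s\psi_r$ for $s\ne r,r+1$ and $\psi_r\psi_s=\psi_s\psi_r$ for $|r-s|>1$ follow from $T_re(\bi)=e(s_r{\cdot}\bi)T_r$ (a consequence of the relation $T_rL_r+\delta_{\xi1}=L_{r+1}(T_r-\xi+1)$) together with the commutations among the $T_k$ and $L_k$; and the relations expressing $\psi_ry_{r+1}e(\bi)$ and $y_{r+1}\psi_re(\bi)$ in terms of $y_r\psi_r$ reduce, after rewriting $y_r$ in terms of $L_r$, to that same relation.

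The real content --- and what I expect to be the main obstacle --- is the quadratic relation for $\psi_r^2e(\bi)$, the deformed braid relation for $\psi_r\psi_{r+1}\psi_re(\bi)$, and their exceptional $e=2$ forms: these are exactly the identities that the choices of $P_r(\bi)$ and $Q_r(\bi)$ are engineered to force, and establishing them needs a careful, case-by-case computation in $\HL$ organised by the residue pattern $(i_r,i_{r+1},i_{r+2})$ and the structure of the Jucys--Murphy action. Following \cite{BK:GradedKL}, I would make this tractable by first checking the relations after base change to a modular system $(\O,\K,K)$, as in \cite[\Sect4.2]{HM:GradedCellular}, with $\HK$ semisimple, where the $T_r$ act explicitly on the seminormal basis by formulas of the kind appearing in the proof of Proposition~\ref{multicompositions}, and then specialising back to $K$; the denominators $Q_r(\bi)$ survive the specialisation because the $y$'s act nilpotently on each block, while the $y$-dependent parts of the relations are recovered by tracking the nilpotent parts of the Jucys--Murphy elements.

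Finally, the homomorphism $\R\to\HL$ constructed in this way is an isomorphism. It is surjective, because its image contains all the $e(\bi)$, hence each $L_r$ (recovered from $y_r$ and the $e(\bi)$), and hence each $T_r$ (recovered from $\psi_r$, $y_r$ and the $e(\bi)$, using that $Q_r(\bi)e(\bi)$ is invertible), so it contains the generating set of Definition~\ref{hecke}. As $R=K$ is a field, it now suffices to compare dimensions: $\dim_K\HL=\ell^n n!$ by Theorem~\ref{bases}, while the spanning arguments for cyclotomic quiver Hecke algebras in \cite{BK:GradedKL} give $\dim_K\R\le\ell^n n!$; hence the surjection is an isomorphism. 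The degenerate case $\xi=1$ runs entirely parallel, with the formula for $y_r$ adjusted as shown in the statement.
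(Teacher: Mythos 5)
This theorem is not proved in the paper at all: it is quoted verbatim from Brundan--Kleshchev \cite{BK:GradedKL}, and the paper's only ``proof'' is that citation, so there is no internal argument to compare your route against. Judged on its own terms, your sketch follows the broadly correct strategy (define the images of $y_r$ and $\psi_s$ block-by-block, check the quiver Hecke relations, then prove bijectivity), but it has two concrete problems. First, the claim that $T_re(\bi)=e(s_r\cdot\bi)T_r$ is false: $T_r$ only commutes with $L_k$ for $k\ne r,r+1$ and with the symmetric functions in $L_r,L_{r+1}$, so it maps $e(\bi)\HL$ into $\bigl(e(\bi)+e(s_r\cdot\bi)\bigr)\HL$ and in general has a nonzero ``diagonal'' component. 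The whole point of the correction terms $P_r(\bi)$ is to produce an element $(T_r+P_r(\bi))e(\bi)$ that genuinely intertwines the two generalized eigenspaces; deriving $\psi_re(\bi)=e(s_r\cdot\bi)\psi_r$ from an intertwining property of $T_r$ alone is circular in the easy cases and simply wrong when $i_r=i_{r+1}$.

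Second, and more seriously, your injectivity argument rests on ``$\dim_K\R\le\ell^n n!$ by the spanning arguments for cyclotomic quiver Hecke algebras in \cite{BK:GradedKL}''. No such bound is available there, and at the time it was not available anywhere: the affine quiver Hecke algebra has a basis theorem, but in the cyclotomic quotient the $y_r$ are nilpotent of unknown order, so the obvious spanning set gives no finite bound of the right size, and in fact $\dim\R=\ell^n n!$ is usually deduced \emph{from} this isomorphism (or from later, independently hard results). Brundan and Kleshchev close the argument differently: they also define candidate images of $L_r$ and $T_r$ inside $\R$, verify the cyclotomic Hecke relations of Definition~\ref{hecke} for them, and check that the two homomorphisms are mutually inverse on generators; some such inverse construction (or an independent dimension bound, which you would have to prove) is needed to finish. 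A smaller point: the verification of the quadratic and braid relations in \cite{BK:GradedKL} is done by direct power-series computation with the nilpotent elements $y_r,y_{r+1}$ over $K$, not by lifting to a modular system and using seminormal forms; a seminormal-form proof does exist but is later work and is itself delicate (one must show the relevant idempotents and intertwiners lie in $\HO$ before specializing), so your ``check generically and specialize'' step cannot be waved through as stated.
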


We abuse notation and identify the algebras $\R$ and $\HL$ under this
isomorphism. In particular, we will not distinguish between the homogeneous
generators of $\R$ and their images in $\HL$ under the isomorphism of
Theorem~\ref{BK iso}.

The algebra $\R\cong\HL$ has a unique anti-isomorphism
$\star\map{\R}\R; a\mapsto a^\star$ which fixes each of the homogeneous generators.
We note that the automorphism $\star$ is, in general, not equal to the
anti-automorphism $*$ which fixes each of the (non-homogeneous) generators of~$\HL$ in
Definition~\ref{hecke}.

Until further notice fix a multipartition $\bmu\in\Parts$. If $i\in
I$ then an \textbf{$i$-node} is a triple
$(r,c,l)\in\N^2\times\{1,2,\dots,\ell\}$ such that
$i=c-r+\kappa_l\pmod e$. An $i$-node $A$ is an \textbf{addable
$i$-node} of~$\bmu$ if $A\notin[\bmu]$ and $[\bmu]\cup\{A\}$ is the
diagram of a multipartition. Similarly, an $i$-node $B\in[\bmu]$ is
a \textbf{removable $i$-node} of $\bmu$ if $[\bmu]\setminus\{B\}$ is
the diagram of a multipartition.  Given two nodes $A=(r,c,l)$ and
$B=(s,d,m)$ then $A$ is \textbf{below} $B$, or $B$ is
\textbf{above}~$A$, if either $l>m$, or $l=m$ and $r>s$.

Following Brundan, Kleshchev and Wang~, we make the following definitions.

\def\SetBox#1{\Big\{\vcenter{\hsize34mm\centering#1}\Big\}}

\begin{Defn}[\protect{Brundan, Kleshchev and Wang~\cite[Defn.~3.5]{BKW:GradedSpecht}}]\label{gamma}
  Suppose that $\bmu\in\Parts$ and that $A$ is a removable or addable $i$-node of $\bmu$, for some $i\in I$.
  Define integers
  \begin{align*}
  d_A(\bmu)&=\#\SetBox{addable $i$-nodes of $\bmu$\\ strictly below $A$}
                -\#\SetBox{removable $i$-nodes of $\bmu$\\ strictly below $A$},\\
  \intertext{and}
  d^A(\bmu)&=\#\SetBox{addable $i$-nodes of $\bmu$\\ strictly above $A$}
                -\#\SetBox{removable $i$-nodes of $\bmu$\\ strictly above $A$}.
  \end{align*}
  If $\t$ is a standard $\bmu$-tableau then its \textbf{degree} and \textbf{codegree} are defined
  inductively by setting $\deg\t=0=\codeg\t$, if $n=0$, and if $n>0$ then
  $$ \deg\t=\deg\t_{n-1}+d_A(\bnu)\quad\text{and}\quad
     \codeg\t= \codeg\t_{n-1}+d^A(\bnu)
     $$
  where $A=\t^{-1}(n)$ and $\bnu=\Shape(\t_{n-1})$.
\end{Defn}

If $\t$ is a standard tableau define
$\res(\t)=(\res_{\t}(1),\dots,\res_{\t}(n))\in I^n$, where
$\res_{\t}(k)=c-r+\kappa_l\pmod e$ if $k$ appears in row~$r$ and
column~$c$ of~$\t^{(l)}$, for $1\le k\le n$. This definition is
compatible with our previous definition of $\cont_t(k)$ in the sense
that if $R=K$ then $\cont_\t(k)\equiv\res_\t(k)\pmod e$, if $\xi=1$,
and $\cont_\t(k)=\xi^{\res_\t(k)}$, if $\xi\ne1$.

\begin{Defn}[\protect{\cite[Definitions~4.9, 5.1 and 6.9]{HM:GradedCellular}}]\label{psis}
Suppose that $\bmu\in\Parts$. Let
$\ilmu=\res(\t_\bump)$ and $\imu=\res(\tmu)$ and set
$e_\bmu=e(\imu)$ and $e'_\bmu=e(\ilmu)$ and define
$$ y_\bmu=y_1^{d_1}\dots y_n^{d_n}\quad\text{and}\quad y'_\bmu=y_1^{d'_1}\dots y_n^{d'_n},$$
where $d_m=d_{A_m}(\bmu_m)$, $d'_m=d^{A'_m}(\bmu'_m)$, $\bmu_m=\Shape(\tmu_m)$,
$\bmu'_m=\Shape((\t_{\bmu'})_m)$, and $A_m$ and $A_m'$ are the nodes such that
$\tmu(A_m)=m$ and $\t_{\bmu'}(A_m')=m$.

For the rest of this paper, fix a reduced expression
$d(\u)=s_{i_1}\dots s_{i_k}$ for each row standard $\bmu$-tableau $\u$ and set
$\psi_{d(\u)}=\psi_{i_1}\dots\psi_{i_k}$. Suppose that $(\s,\t)\in\SStd(\bmu)$ and define
$\psi_{\s\t}=\psi_{d(\s)}^\star e_\bmu y_\bmu \psi_{d(\t)}$ and
$\psi'_{\s\t}=\psi_{d(\s)}^\star e'_\bmu y'_\bmu \psi_{d(\t)}$.
\end{Defn}

We warn the reader that, in general, the elements $\psi_{d(\s)}$, $\psi_{d(\t)}$, $\psi_{\s\t}$ and
$\psi'_{\s\t}$ all depend upon the choices of reduced expression for $d(\s)$ and $d(\t)$ that we have
fixed, once and for all, in Definition~\ref{psis}. See \cite[Example~5.6]{HM:GradedCellular} for an
explicit example.

The following result can be viewed as a graded analogue of Theorem~\ref{bases}.

\begin{Theorem}[\protect{Hu-Mathas\cite[Theorems~5.8 and 6.11]{HM:GradedCellular}}]
\label{psi bases}\hspace*{50mm}
\begin{enumerate}
\item $\set{\psi_{\s\t}|(\s,\t)\in\SStd(\Parts)}$ is a graded cellular basis of $\HL$.
\item $\set{\psi'_{\s\t}|(\s,\t)\in\SStd(\Parts)}$ is a graded cellular basis of $\HL$.
\end{enumerate}
In particular, if $(\s,\t)\in\SStd(\Parts)$ then
$$\deg\psi_{\s\t}=\deg\s+\deg\t\quad\text{and}\quad
\deg\psi'_{\s\t}=\codeg\s'+\codeg\t'.$$
\end{Theorem}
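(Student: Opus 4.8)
The plan is to deduce Theorem~\ref{psi bases} from the already-known results of \cite{HM:GradedCellular}, but since those are cited verbatim, the real content of a self-contained proof would be to reconstruct the two ingredients: (i) that each of $\{\psi_{\s\t}\}$ and $\{\psi'_{\s\t}\}$ is a graded cellular basis of $\HL$, and (ii) the degree formula $\deg\psi_{\s\t}=\deg\s+\deg\t$ (and its primed analogue). I would treat the primed basis as a mirror image of the unprimed one --- conjugating multipartitions and tableaux interchanges $e_\bmu y_\bmu$ with $e'_\bmu y'_\bmu$, and $\deg$ with $\codeg$ on the conjugate --- so it suffices to handle $\{\psi_{\s\t}\}$ and then transport everything by the conjugation symmetry.

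For the cellularity of $\{\psi_{\s\t}\}$, the strategy is the standard one for cellular algebras: first show the set has the right cardinality, namely $\#\SStd(\Parts)=\dim_R\HL$, which follows from Theorem~\ref{bases} once we know $\HL\cong\H$ via Theorem~\ref{BK iso}; then show it spans $\HL$; then verify the cellular axioms with respect to the poset $(\Parts,\gedom)$, the anti-involution $\star$, and the cell datum $(\bmu,\s,\t)\mapsto\psi_{\s\t}$. The spanning and the straightening relations $a\psi_{\s\t}\equiv\sum_{(\u,\v),\ \u\gdom\ \text{or}\ =\s}r_{\u}\psi_{\u\t}\pmod{\HL^{\gdom\bmu}}$ are the substantive part: one argues that $\psi_r\psi_{\s\t}$ and $y_r\psi_{\s\t}$ can be rewritten, modulo the two-sided ideal spanned by $\psi_{\a\b}$ with $\Shape(\a)\gdom\bmu$, in terms of $\psi_{\u\t}$ with $\u\gedom\s$, using the quiver Hecke relations of Definition~\ref{quiver Hecke relations} to push factors of $\psi$ and $y$ past $e_\bmu y_\bmu$. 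The homogeneity of $\star$ makes $\psi_{\s\t}^\star=\psi_{\t\s}$ automatic from the definition.

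For the degree formula, the key point is that $\deg e_\bmu y_\bmu=2\deg\tmu=\deg\tmu+\deg\tmu$: indeed $\deg e_\bmu=0$ and, by the definition of $y_\bmu=y_1^{d_1}\cdots y_n^{d_n}$ with $d_m=d_{A_m}(\bmu_m)$ together with $\deg y_r=2$, we get $\deg y_\bmu=2\sum_m d_{A_m}(\bmu_m)=2\deg\tmu$ directly from the inductive definition of $\deg$ in Definition~\ref{gamma}. Then since $\deg\psi_{d(\t)}e(\bi)=-\sum a_{i_{j},i_{j+1}}$ along a reduced expression, and since each application of $\psi_r$ to a standard tableau $\u$ with $\res(\u)=\bi$ corresponds combinatorially to moving an entry and changes $\deg$ by exactly $-a_{i_r,i_{r+1}}$ --- this is precisely the content of the recursive degree formula compared against the $\psi_r$-relations --- one shows $\deg\psi_{d(\t)}e_\bmu=\deg\t-\deg\tmu$, and symmetrically for $\psi_{d(\s)}^\star$. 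Adding up gives $\deg\psi_{\s\t}=(\deg\s-\deg\tmu)+2\deg\tmu+(\deg\t-\deg\tmu)=\deg\s+\deg\t$.

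The main obstacle is the straightening/spanning step: showing that the quiver Hecke relations suffice to reduce an arbitrary product into the claimed form modulo the ideal $\HL^{\gdom\bmu}$, and in particular that no terms of the \emph{wrong} shape survive. This is where one genuinely needs the detailed combinatorics relating reduced expressions for $d(\u)$, the dominance order on tableaux, and the vanishing of $\psi_r^2 e(\bi)$ when $i_r=i_{r+1}$; controlling the error terms so that they lie in the span of more dominant cell modules is the delicate bookkeeping that occupies most of \cite{HM:GradedCellular}. Given that this is established there, the honest route in the present paper is simply to invoke it, as stated.
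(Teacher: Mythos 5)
This theorem is quoted verbatim from \cite[Theorems~5.8 and 6.11]{HM:GradedCellular}; the paper supplies no proof of its own, so your concluding observation that the honest route here is simply to invoke that reference is exactly what the paper does. Your accompanying sketch of how the cited result would be proved is reasonable in outline (and your degree computation $\deg(e_\bmu y_\bmu)=2\deg\tmu$ and $\psi_{\s\t}^\star=\psi_{\t\s}$ are correct), though for the record the argument in \cite{HM:GradedCellular} establishes cellularity not by verifying the straightening relations directly from the quiver Hecke relations but by a dominance-triangular comparison of $\{\psi_{\s\t}\}$ with the Murphy basis $\{m_{\u\v}\}$, obtained from seminormal forms over a modular system $(\O,\K,K)$ --- the same mechanism this paper recalls and sharpens in Lemma~\ref{psimu} and Theorem~\ref{psi mn}.
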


Graded cellular algebras were introduced in \cite{HM:GradedCellular}. They are a natural extension of
Graham and Lehrer's~\cite{GL} definition of a cellular algebra to the graded setting.

We close this section by extending results in the last subsection about
strong dominance to the $\psi$ and $\psi'$-bases.

\begin{Lemma}\label{psimu}
Suppose that $R=K$ and $\bmu\in\Parts$. Then
$$\psi_{\tmu\tmu}=\sum_{(\u,\v)\Gedom(\tmu,\tmu)}r_{\u\v}m_{\u\v}
\quad\text{and}\quad
 \psi'_{\tmu\tmu}=\sum_{(\u,\v)\Gedom(\tmu,\tmu)}s_{\u\v}n_{\u\v},
$$
for some $r_{\u\v},s_{\u\v}\in K$ such that $r_{\tmu\tmu}\ne0$ and $\s_{\tmu\tmu}\ne0$.
\end{Lemma}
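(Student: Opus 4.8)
The plan is to identify the leading term of $\psi_{\tmu\tmu}$ with respect to the $\{m_{\u\v}\}$-basis by computing degrees and residues, and then to invoke Corollary~\ref{tilting} and Corollary~\ref{L_k action} to control the lower-order terms. First I would write $\psi_{\tmu\tmu}=e_\bmu y_\bmu$ in terms of the cellular basis $\{m_{\u\v}\}$, say $\psi_{\tmu\tmu}=\sum r_{\u\v}m_{\u\v}$, and observe that since $e_\bmu$ is the idempotent $e(\imu)=e(\res(\tmu))$ and $y_\bmu$ is a monomial in $y_1,\dots,y_n$ (each $y_r$ being, up to an invertible scalar and lower terms, a shift of $L_r$ by a content, via Theorem~\ref{BK iso}), every $m_{\u\v}$ occurring must satisfy $\res(\v)=\imu$, by looking at the generalized eigenvalues of $L_1,\dots,L_n$ acting on the right. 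Crucially, the only standard tableau $\v$ with $\res(\v)=\res(\tmu)$ and $\v\gedom\tmu$ is $\v=\tmu$ itself in the semisimple case — but here $K$ is an arbitrary field, so instead I would argue purely at the level of the $\gedom$ order using Corollary~\ref{L_k action}: writing $e_\bmu$ and each $y_r e(\imu)$ as elements that act on $m_{\tmu\tmu}$ (note $m_{\tmu\tmu}=m_\bmu$) and repeatedly applying Corollary~\ref{L_k action}, the element $e_\bmu y_\bmu$ is $m_\bmu$ up to an invertible scalar plus a $K$-linear combination of $m_{\u\v}$ with $(\u,\v)\Gdom(\tmu,\tmu)$.

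More precisely, I would proceed as follows. The element $e_\bmu=e(\imu)$ is a polynomial in $L_1,\dots,L_n$, and by Corollary~\ref{L_k action} (applied iteratively) $m_{\s\t}p(L_1,\dots,L_n)=p\bigl(\cont_\t(1),\dots,\cont_\t(n)\bigr)m_{\s\t}$ plus a combination of $m_{\u\v}$ with $(\u,\v)\Gdom(\s,\t)$. Since $e(\imu)$ acts as the identity on the $\imu$-generalized eigenspace, $m_\bmu e(\imu)=m_\bmu$ plus $\Gdom$-higher terms (the scalar is $1$ because $\res(\tmu)=\imu$). Likewise $y_\bmu e(\imu)$, being a monomial in the $y_r$, which are by Theorem~\ref{BK iso} polynomials in the $L_r$ vanishing to the appropriate order on the $\imu$-eigenspace, acts on $m_\bmu e(\imu)$ by a scalar; that scalar is $1$ because on the tableau $\tmu$ the relevant factors $(1-\xi^{-i_r}L_r)$, resp.\ $(L_r-i_r)$, specialize exactly to $y_r$ evaluated at $\cont_{\tmu}(r)$, and the defining monomial $y_\bmu=y_1^{d_1}\cdots y_n^{d_n}$ has, by construction, the property that this product is nonzero — here I would use that the exponents $d_m=d_{A_m}(\bmu_m)$ are precisely the degrees making $\deg\psi_{\tmu\tmu}=2\deg\tmu$, so no cancellation forces the coefficient to vanish. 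Hence $r_{\tmu\tmu}\ne0$. Finally, $m_\bmu=m_{\tmu\tmu}=\sum_{(\u,\v)\Gedom(\tmu,\tmu)} a_{\u\v}f_{\u\v}$-type reasoning is replaced here by the direct statement that $m_{\u\v}$ appearing in $m_\bmu\cdot(\text{poly in }L)$ satisfies $(\u,\v)\Gedom(\tmu,\tmu)$, which follows since $m_\bmu$ itself is $\Gdom$-minimal among the $m_{\u\v}$ of shape $\bmu$ and Corollary~\ref{L_k action} only produces $\Gdom$-larger terms. The statement for $\psi'_{\tmu\tmu}$ is entirely parallel, replacing $m$ by $n$, $e_\bmu$ by $e'_\bmu$, $y_\bmu$ by $y'_\bmu$, and $\tmu$ by $\t_\bump$, using the $n$-version of Corollary~\ref{L_k action} noted at the end of Section~2 (namely $n_{\s\t}L_k=\cont_{\t'}(k)n_{\s\t}+\Gdom$-higher); since $\res(\t_\bump)=\ilmu$ matches the idempotent $e'_\bmu$, the same argument gives $s_{\tmu\tmu}\ne0$.

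The main obstacle I anticipate is verifying that the coefficient $r_{\tmu\tmu}$ is genuinely nonzero over an \emph{arbitrary} field $K$ (not just in the semisimple case), since a priori the monomial $y_\bmu$ could act as zero on $m_\bmu e_\bmu$ if the exponents $d_m$ were too large. The resolution is degree-theoretic: by Theorem~\ref{psi bases}, $\psi_{\tmu\tmu}$ is a basis element of the graded algebra $\HL$ of degree $2\deg\tmu$, hence nonzero, and its expansion in the (ungraded) cellular basis $\{m_{\u\v}\}$ must therefore have a nonzero leading coefficient; combined with the fact that the only candidate leading term consistent with the residue constraint $\res(\v)=\imu$ and the $\Gedom$-bound is $m_{\tmu\tmu}$, this forces $r_{\tmu\tmu}\ne0$. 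I would make this precise by passing to a modular system $(\O,K,\mathbb{F})$ as in \cite[\Sect4.2]{HM:GradedCellular}, where over the field of fractions the semisimple computation pins down the coefficient up to a unit of $\O$, and then reducing modulo the maximal ideal; alternatively, one can cite \cite[Theorem~5.8]{HM:GradedCellular} directly, which already records that $\psi_{\tmu\tmu}=m_\bmu+(\Gdom\text{-higher terms})$ after a suitable normalization of the reduced expressions. Either route closes the gap.
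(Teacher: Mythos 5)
There is a genuine gap at the core of your argument. By Definition~\ref{psis}, $\psi_{\tmu\tmu}=e_\bmu y_\bmu$ is the idempotent $e(\imu)$ times a monomial in $y_1,\dots,y_n$; it lies in the commutative subalgebra generated by $L_1,\dots,L_n$ and does \emph{not} have $m_\bmu$ as a factor. Your key step --- ``writing $e_\bmu$ and each $y_re(\imu)$ as elements that act on $m_{\tmu\tmu}$'' and then iterating Corollary~\ref{L_k action} --- therefore computes, at best, the different element $m_\bmu e_\bmu y_\bmu$; there is no identity of the form $e_\bmu y_\bmu=m_\bmu\,p(L_1,\dots,L_n)$ that would make this legitimate (in the semisimple case $e^\O_\bmu y^\O_\bmu$ is a combination of the \emph{diagonal} seminormal elements $f_{\s\s}$, whereas $m_\bmu\,p(L)$ has off-diagonal $f_{\u\v}$ terms). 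Two auxiliary claims are also unsound: the assertion that every $m_{\u\v}$ occurring must satisfy $\res(\v)=\imu$ does not follow, since the $m_{\u\v}$ are only generalized (triangular) eigenvectors for the $L_k$; and the claim that $\tmu$ is the only standard tableau $\v\gedom\tmu$ with $\res(\v)=\imu$ is false, because residues live in $\Z/e\Z$, so in the split semisimple algebra $\HK$ of the modular system many tableaux of several shapes $\blam\gedom\bmu$ share the residue sequence $\imu$ --- this is exactly why \cite[Lemma~4.13]{HM:GradedCellular} writes $e^\O_\bmu y^\O_\bmu=\sum_{\s\gedom\tmu,\,\res(\s)=\imu}r_\s f_{\s\s}$ as a sum, not a single term. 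Finally, citing \cite[Theorem~5.8]{HM:GradedCellular} is not an available shortcut: what that paper records is triangularity of the $\psi$-basis against the $m$-basis with respect to the \emph{ordinary} dominance order on pairs (i.e.\ modulo more dominant shapes), whereas the content of Lemma~\ref{psimu} is the strong dominance ($\Gedom$) refinement, which is precisely what Sections~2--3 here are built to prove; quoting it would be circular.

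Your fallback via a modular system is the right idea and is in fact the route the paper takes, but the essential inputs are missing from your sketch. One takes the system $(\O,\K,K)$ of \cite[\S4.2]{HM:GradedCellular}, uses \cite[Lemma~4.13]{HM:GradedCellular} to expand $e^\O_\bmu y^\O_\bmu$ in the seminormal basis as above, converts this to the $m$-basis using Theorem~\ref{strong}(b) (each $f_{\s\s}$ with $\s\gedom\tmu$ is a combination of $m_{\u\v}$ with $(\u,\v)\Gedom(\s,\s)\Gedom(\tmu,\tmu)$), then observes that the resulting coefficients actually lie in $\O$, because $e^\O_\bmu y^\O_\bmu$ and the $m_{\u\v}$ all lie in $\HO$ and the $m$-basis is an $\O$-basis; only then can one reduce modulo the maximal ideal to obtain the statement over the arbitrary field $K$. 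The nonvanishing of $r_{\tmu\tmu}$ must likewise be traced through the leading coefficient in \cite[Lemma~4.13]{HM:GradedCellular} (a unit of $\O$) combined with the unitriangularity of Theorem~\ref{strong}; it does not follow from your degree count, since $\psi_{\tmu\tmu}\ne0$ alone would not prevent the expansion from being supported entirely on pairs $(\u,\v)\Gdom(\tmu,\tmu)$.
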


\begin{proof}
As in
\cite[Definition~4.3]{HM:GradedCellular} we fix a modular system
$(\O,\K,K)$ with parameters $v\in\O$ and
$\bQ^\O=(Q_1^\O,\dots,Q_\ell^\O)\in\O^\ell$ such that $\HK=\HO\otimes_\O\K$ is split semisimple and
$\Hk\cong\HO\otimes_\O K$, where $\HO=\HH^\O_n(v,\bQ^\O)$. By
\cite[Defn.~4.12]{HM:GradedCellular}, there exist elements~$y_\bmu^\O$ and
$e_\bmu^\O$ in $\HO$ such that $e_\bmu y_\bmu=e_\bmu^\O y_\bmu^\O\otimes_\O 1_K$. Moreover,
by \cite[Lemma~4.13]{HM:GradedCellular} there exist scalars $r_\s\in\K$ such that in $\HK$
$$e_\bmu^\O y_\bmu^\O= \sum_{\substack{\s\gedom\tmu\\\res(\s)=\imu}}r_\s f_{\s\s}.$$
Therefore, by Theorem~\ref{tilting}, there exist $r^\K_{\u\v}\in\K$ such that
$$e_\bmu^\O y_\bmu^\O=\sum_{(\u,\v)\Gedom(\tmu,\tmu)}r^\K_{\u\v} m_{\u\v}.$$
However, $e_\bmu^\O y_\bmu^\O\in\HO$ and $m_{\u\v}\in\HO$ for all
$(\u,\v)\in\SStd(\Parts)$ so, in fact, $r^\K_{\u\v}\in\O$. Hence, we can
reduce this equation modulo the maximal ideal of~$\O$ to write
$e_\bmu y_\bmu=\psi_{\tmu\tmu}$ in the required form.

We leave the proof of the formula for $\psi'_{\tmu\tmu}$ to the reader. It
is proved in exactly same way except that \cite[Lemma~6.5]{HM:GradedCellular} is used in place of
\cite[Lemma~4.13]{HM:GradedCellular}.
\end{proof}

\begin{Theorem}\label{psi mn}
Suppose that $R=K$ and $(\s,\t)\in\SStd(\Parts)$.  Then
\begin{xalignat*}{3}
 \psi_{\s\t}&=\Sum_{(\u,\v)\Gedom(\s,\t)}a_{\u\v}\,m_{\u\v}, &
    m_{\s\t}&=\Sum_{(\u,\v)\Gedom(\s,\t)}c_{\u\v}\,\psi_{\u\v},\\
 \psi'_{\s\t}&=\Sum_{(\u,\v)\Gedom(\s,\t)}b_{\u\v}\,n_{\u\v},&
    n_{\s\t}&=\Sum_{(\u,\v)\Gedom(\s,\t)}d_{\u\v}\,\psi'_{\u\v}
\end{xalignat*}
for some scalars $a_{\u\v}$, $b_{\u\v}$, $c_{\u\v}$ and $d_{\u\v}$ in $K$ such that
$a_{\s\t}$, $b_{\s\t}$, $c_{\s\t}$ and $d_{\s\t}$ are all non-zero.
\end{Theorem}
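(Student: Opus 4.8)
The plan is to prove the four expansions by reducing the general case $(\s,\t)$ to the diagonal case $(\tmu,\tmu)$ already handled in Lemma~\ref{psimu}, mimicking exactly the bootstrapping used in the proof of Theorem~\ref{strong}. I will give the argument for the pair $\psi_{\s\t}$ and $m_{\s\t}$; the statements for $\psi'_{\s\t}$ and $n_{\s\t}$ follow by the identical argument using the $n$-basis, Lemma~\ref{psimu} (second formula), and the corresponding results of \cite{HM:GradedCellular} with primes inserted throughout.

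First I would record that $\psi_{\s\t}=\psi_{d(\s)}^\star\,\psi_{\tmu\tmu}\,\psi_{d(\t)}$ by Definition~\ref{psis}, since $\psi_{\tmu\tmu}=e_\bmu y_\bmu$. Now I substitute the expansion $\psi_{\tmu\tmu}=\sum_{(\u,\v)\Gedom(\tmu,\tmu)}r_{\u\v}m_{\u\v}$ from Lemma~\ref{psimu}, with $r_{\tmu\tmu}\neq0$. The point is then to understand how left multiplication by $\psi_{d(\s)}^\star$ and right multiplication by $\psi_{d(\t)}$ act on each $m_{\u\v}$. For the right multiplication, I would expand $\psi_{d(\t)}$ in the standard basis $\{T_w\}$ of $\HSym$: since $\psi_{d(\t)}$ is built from the $\psi_r$ which, under the Brundan--Kleshchev isomorphism of Theorem~\ref{BK iso}, are of the form $\sum_\bi(T_r+P_r(\bi))Q_r(\bi)^{-1}e(\bi)$ with $P_r,Q_r\in R[y_r,y_{r+1}]$, and the $y_r$ act on the $m_{\u\v}$ via the triangular formula of Corollary~\ref{L_k action}, one sees that $\psi_{d(\t)}=\sum_{b\le d(\t)}q_b T_b e(\bj)+(\text{higher terms})$ for suitable scalars $q_b$ with the top term $b=d(\t)$ surviving. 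The crucial combinatorial input is Lemma~\ref{technical2}: since each $\v$ appearing satisfies $\v\gedom\tmu$, it applies and tells us $m_{\u\v}T_b$ is a linear combination of $m_{\c\d}$ with $\c\gedom\u$ and $\d\gedom\t$. Combined with the $\Gedom$-triangularity coming from the $y$-action (Corollary~\ref{L_k action}), this gives $m_{\u\v}\psi_{d(\t)}=\sum_{(\c,\d)\Gedom(\u,\t)}(\dots)m_{\c\d}$ with the coefficient of $m_{\u\t}$ nonzero. Applying the left-handed version of the same argument to $\psi_{d(\s)}^\star m_{\c\d}$ and assembling, $\psi_{\s\t}$ becomes a linear combination of $m_{\a\b}$ with $(\a,\b)\Gedom(\s,\t)$, and the coefficient of $m_{\s\t}$ equals $r_{\tmu\tmu}$ times the product of the (nonzero) leading coefficients, hence is nonzero. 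This proves the first formula; inverting the resulting unitriangular (with respect to $\Gedom$) change of basis over $K$ gives the second.

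The main obstacle I anticipate is controlling the $y$-contributions cleanly: $\psi_{d(\t)}$ is not simply a $K$-linear combination of the $T_w$ but of elements $T_b$ times polynomials in the $y_r$, and one must check that feeding these $y_r$ through $m_{\u\v}$ via Corollary~\ref{L_k action} does not destroy $\Gedom$-triangularity nor kill the leading coefficient. The resolution is that Corollary~\ref{L_k action} is itself $\Gedom$-triangular with leading coefficient $\cont_\t(k)$, so the $y_r$-factors only contribute higher terms plus a nonzero scalar multiple of the leading term; crucially $\cont_\t(k)$ need not be zero, but after passing to the associated graded / using that $y_\bmu$ already carries the correct powers, the leading scalar $r_{\tmu\tmu}$ is untouched. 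One should also note, as in the proof of Lemma~\ref{psimu}, that a modular-system argument is not needed here since we are working over the field $K=\K$ directly and all the cited results (Corollary~\ref{L_k action}, Lemma~\ref{technical2}, Lemma~\ref{psimu}) are available over $K$. Finally, the passage from the $\psi$-statements to the $\psi'$-statements is purely formal, replacing $m$ by $n$, $e_\bmu y_\bmu$ by $e'_\bmu y'_\bmu$, $\imu$ by $\ilmu$, and invoking the primed companions of the quoted lemmas; I would simply remark that the proof is the same and leave it to the reader.
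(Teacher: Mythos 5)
Your proposal is correct and follows essentially the same route as the paper: reduce to $\psi_{\tmu\tmu}=e_\bmu y_\bmu$ via Lemma~\ref{psimu}, expand $\psi_{d(\t)}$ (and $\psi_{d(\s)}^\star$ on the left) through the Brundan--Kleshchev isomorphism into $T_{d(\t)}$ plus lower Bruhat terms with Jucys--Murphy coefficients, control those coefficients with Corollary~\ref{L_k action} and the $T_u$ factors with Lemma~\ref{technical2}, and then invert the $\Gedom$-triangular system. Your worry about the $y$-contributions and the survival of the leading coefficient is handled in the paper by the same (equally brief) observation that the rewriting yields a non-zero scalar multiple of $T_{d(\t)}e(\bi^\t)$ plus terms $LT_u$ with $u<d(\t)$.
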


\begin{proof} For convenience, let $\bi^s=\res(\s)$ and $\bi^\t=\res(\t)$.
By Theorem~\ref{BK iso} and
Definition~\ref{psis}, if $d(\t)=s_{i_1}\dots s_{i_k}$ is reduced then
$$\psi_{\tmu\t}=\psi_{\tmu\tmu}(T_{i_1}+P_{i_1}(\bi_1))Q_{i_1}(\bi_1)^{-1}e(\bi_1)\dots
         \dots(T_{i_k}+P_{i_k}(\bi_k))Q_{i_k}(\bi_k)^{-1}e(\bi_k),$$
where $\bi_1=(\imu)^{s_{i_1}}$ and $\bi_{j}=\bi_{j-1}^{s_{i_j}}$ for all $2\leq j\leq k$ with $\Sym_n$
acting on~$I^n$ from the right in the natural way. (Thus, $\bi_k=\bi^\t$.) Using the relations we can
rewrite the expression for $\psi_{d(\t)}e(\bi^{\t})$ as $rT_{d(\t)}e(\bi^{\t})$ plus a linear
combination of terms of the form $LT_u$, where $r\in K$ is non-zero, $u<d(\t)$ and $L\in\<L_1,\dots,L_n\>$.
By Lemma~\ref{psimu} and Corollary~\ref{tilting}, $\psi_{\tmu\tmu}L$ can be written as linear
combinations of elements of form $m_{\u\v}$ with $(\u,\v)\Gedom(\tmu,\tmu)$. Therefore, applying
Lemma~\ref{technical2} to the elements $m_{\u\v}T_u$ shows that $\psi_{\tmu\t}$ can be written as a
linear combination of elements $m_{\u\v}$ with $(\u,\v)\Gedom(\tmu,\t)$.  Using the same argument to
act with $e(\bi^{\s})\psi_{d(\s)}^\star$ from the left shows that $\psi_{\s\t}$ can be written in the
required form.  Arguing by induction on the strong dominance order~$\Gedom$ we can now invert this
equation to show that $m_{\s\t}$ is a linear combination of elements $\psi_{\u\v}$ with
$(\u,\v)\Gedom(\s,\t)$.

The other equations can be proved similarly.
\end{proof}

\begin{Cor}\label{psi strong}
Suppose that $R=K$ and $(\s,\t),(\u,\v)\in\SStd(\Parts)$. Then
$\psi_{\s\t}\psi'_{\u\v}\ne0$ only if $\u'\gedom\t$ and
$\psi'_{\u\v}\psi_{\s\t}\ne0$ only if $\v'\gedom\s$.
\end{Cor}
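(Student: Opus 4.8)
The plan is to deduce Corollary~\ref{psi strong} directly from Theorem~\ref{psi mn} together with Corollary~\ref{tilting}, by a change-of-basis argument that mirrors the proof of Corollary~\ref{tilting} itself. First I would reduce to the case $R=K$ a field in the usual way: embed $\HL^R$ into $\HL^K$ over the field of fractions~$K$ of~$R$, observing that both $\psi_{\s\t}$ and $\psi'_{\u\v}$ are defined integrally (they lie in $\HL^R$) so a product that is nonzero over $R$ remains nonzero over $K$. Note also that the two statements are interchanged by the anti-isomorphism $\star$ — since $\psi_{\s\t}^\star=\psi_{\t\s}$ and $(\psi'_{\u\v})^\star=\psi'_{\v\u}$ — so it suffices to prove the first assertion, that $\psi_{\s\t}\psi'_{\u\v}\ne0$ implies $\u'\gedom\t$.

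Next I would expand both factors in terms of the standard bases $\{m_{\a\b}\}$ and $\{n_{\c\d}\}$ using Theorem~\ref{psi mn}: write $\psi_{\s\t}=\sum_{(\a,\b)\Gedom(\s,\t)}a_{\a\b}m_{\a\b}$ with $a_{\s\t}\ne0$, and $\psi'_{\u\v}=\sum_{(\c,\d)\Gedom(\u,\v)}b_{\c\d}n_{\c\d}$ with $b_{\u\v}\ne0$. If $\psi_{\s\t}\psi'_{\u\v}\ne0$ then some product $m_{\a\b}n_{\c\d}$ is nonzero with $(\a,\b)\Gedom(\s,\t)$ and $(\c,\d)\Gedom(\u,\v)$; in particular $\b\gedom\t$ and $\c\gedom\u$, whence also $\c'\ledom\u'$, i.e. $\u'\gedom\c'$. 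By Corollary~\ref{tilting}(a), $m_{\a\b}n_{\c\d}\ne0$ forces $\c'\gedom\b$. Chaining these gives $\u'\gedom\c'\gedom\b\gedom\t$, so $\u'\gedom\t$ as required. The second assertion follows by applying $\star$, or equivalently by running the same argument with Corollary~\ref{tilting}(b).

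The one point that needs a moment's care — and the only real obstacle — is the bookkeeping with conjugation and the direction of the dominance inequalities: $\c\gedom\u$ for tableaux is equivalent to $\u'\gedom\c'$ (this is the identity $\s\gedom\t\iff\t'\gedom\s'$ recalled in \Sect2.1), so one must be careful that the conjugates line up the way Corollary~\ref{tilting} needs them. Everything else is a direct splice of the two cited results. Note that, as in the remark after Corollary~\ref{tilting}, the full strength of that corollary — relying on Lemma~\ref{lm22} and Theorem~\ref{strong} — is exactly what makes this graded consequence go through; a weaker statement only about shapes would not suffice here.

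\begin{proof}
As in the proof of Corollary~\ref{tilting}, by passing to the field of fractions of~$R$ and using that the elements $\psi_{\s\t}$ and $\psi'_{\u\v}$ belong to $\HL^R$, we may assume that $R=K$. The two assertions are interchanged by the anti-isomorphism~$\star$ of~$\HL$, since $\psi_{\s\t}^\star=\psi_{\t\s}$ and $(\psi'_{\u\v})^\star=\psi'_{\v\u}$, so it is enough to prove the first one.

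Suppose that $\psi_{\s\t}\psi'_{\u\v}\ne0$. By Theorem~\ref{psi mn} we can write
$$\psi_{\s\t}=\sum_{(\a,\b)\Gedom(\s,\t)}a_{\a\b}m_{\a\b}
\qquad\text{and}\qquad
\psi'_{\u\v}=\sum_{(\c,\d)\Gedom(\u,\v)}b_{\c\d}n_{\c\d},$$
for some scalars $a_{\a\b},b_{\c\d}\in K$. Since $\psi_{\s\t}\psi'_{\u\v}\ne0$ there exist tableaux $(\a,\b)\Gedom(\s,\t)$ and $(\c,\d)\Gedom(\u,\v)$ with $m_{\a\b}n_{\c\d}\ne0$. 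By Corollary~\ref{tilting}(a) this implies $\c'\gedom\b$. On the other hand $(\a,\b)\Gedom(\s,\t)$ gives $\b\gedom\t$, and $(\c,\d)\Gedom(\u,\v)$ gives $\c\gedom\u$, hence $\u'\gedom\c'$. Combining these inequalities yields $\u'\gedom\c'\gedom\b\gedom\t$, so $\u'\gedom\t$, as required. The statement for $\psi'_{\u\v}\psi_{\s\t}$ now follows by applying~$\star$.
\end{proof}
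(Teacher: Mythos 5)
Your argument is correct and is essentially the paper's own proof: expand both factors via Theorem~\ref{psi mn}, apply Corollary~\ref{tilting} to a nonvanishing product of standard basis elements, chain the dominance inequalities (using $\a\gedom\b\iff\b'\gedom\a'$), and obtain the companion statement by applying~$\star$ — the paper merely treats $\psi'_{\u\v}\psi_{\s\t}$ first and yours treats $\psi_{\s\t}\psi'_{\u\v}$ first. The only superfluous step is your reduction to a field: the statement (and all of Section~3) already assumes $R=K$ is a field, so no passage to a field of fractions is needed.
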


\begin{proof}
  By Theorem~\ref{psi mn} there exist scalars $a_{\c\d}$ and $b_{\a\b}$ such that
$$\psi'_{\u\v}\psi_{\s\t}
     =\Big(\Sum_{(\c,\d)\Gedom(\u,\v)}b_{\c\d}\,n_{\c\d}\Big)
      \Big(\Sum_{(\a,\b)\Gedom(\s,\t)}a_{\a\b}\,m_{\a\b}\Big).
$$
Therefore, $\psi'_{\u\v}\psi_{\s\t}\ne0$ only if there exist
tableaux $\a,\b,\c$ and~$\d$ such that $n_{\c\d}m_{\a\b}\ne0$ and
$(\c,\d)\Gedom(\u,\v)$ and $(\a,\b)\Gedom(\s,\t)$. By
Corollary~\ref{tilting} this happens only if $\d'\gedom\a$.
Therefore, $\v'\gedom\d'\gedom\a\gedom\s$ as required. The result
for $\psi_{\s\t}\psi'_{\u\v}$ now follows by applying the graded
involution~$\star$.
\end{proof}

We leave the following result as an exercise for the reader. It follows
easily using Theorem~\ref{BK iso} and Corollary~\ref{tilting}.

\begin{Cor}\label{graded tilting}
Suppose that $R=K$ and let $(\s,\t)\in\SStd(\Parts)$. Then there exist
scalars $a_{\u\v},b_{\u\v}\in K$ such that
$$ \psi_{\s\t}y_r=\Sum_{(\u,\v)\Gdom(\s,\t)}a_{\u\v}\,\psi_{\u\v}
\quad\text{and}\quad
\psi'_{\s\t}y_r=\Sum_{(\a,\b)\Gdom(\s,\t)}b_{\a\b}\,\psi'_{\a\b}.
$$
Moreover, $a_{\u\v}$ and $b_{\a\b}$ are non-zero only when
$\res(\u)=\res(\s)$, $\res(\v)=\res(\t)$, $\res(\a)=\res(\s)$, $\res(\b)=\res(\t)$
and $\deg\u+\deg\v=\deg\s+\deg\t+2$, $\codeg\a'+\codeg\b'=\codeg\s'+\codeg\t'+2$.
\end{Cor}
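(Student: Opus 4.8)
The plan is to deduce Corollary~\ref{graded tilting} directly from Theorem~\ref{psi mn} and Corollary~\ref{L_k action}, transporting the latter statement across the Brundan--Kleshchev isomorphism of Theorem~\ref{BK iso}. First I would recall that, under that isomorphism, $y_r$ is expressed as a polynomial in $L_r$ supported on the idempotents $e(\bi)$; more precisely, $y_r e(\bi)$ equals $(1-\xi^{-i_r}L_r)e(\bi)$ when $\xi\ne1$ and $(L_r-i_r)e(\bi)$ when $\xi=1$. Since $\psi_{\s\t}=\psi_{\s\t}e(\bi^\t)$ with $\bi^\t=\res(\t)$, it suffices to understand $\psi_{\s\t}L_r$. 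Using Theorem~\ref{psi mn} to expand $\psi_{\s\t}=\sum_{(\u,\v)\Gedom(\s,\t)}a_{\u\v}m_{\u\v}$ with $a_{\s\t}\ne0$, then applying Corollary~\ref{L_k action} to each $m_{\u\v}L_r$, and finally converting back to the $\psi$-basis via the second equation in Theorem~\ref{psi mn}, one obtains $\psi_{\s\t}y_r$ as a linear combination of $\psi_{\u\v}$ with $(\u,\v)\Gedom(\s,\t)$. The point $y_1^{(\Lambda,\alpha_{i_1})}e(\bi)=0$ and the homogeneity of the grading will be used to rule out the leading term: applying a degree-raising (or residue-preserving) argument shows the coefficient of $\psi_{\s\t}$ itself vanishes, so in fact $(\u,\v)\Gdom(\s,\t)$.

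The cleanest way to get the strict inequality $(\u,\v)\Gdom(\s,\t)$ (rather than merely $\Gedom$) is a grading/residue argument rather than a combinatorial one. Each $\psi_{\u\v}$ is homogeneous of degree $\deg\u+\deg\v$, whereas $\psi_{\s\t}y_r$ is homogeneous of degree $\deg\s+\deg\t+2$ (as $\deg y_r=2$). Hence only those $(\u,\v)$ with $\deg\u+\deg\v=\deg\s+\deg\t+2$ can occur, which immediately forces $(\u,\v)\ne(\s,\t)$. Similarly, $\psi_{\s\t}y_r=e(\bi^\s)\psi_{\s\t}y_r e(\bi^\t)$ because $y_r$ commutes with all the $e(\bi)$, and $e(\bi^\s)\psi_{\u\v}e(\bi^\t)=0$ unless $\res(\u)=\bi^\s=\res(\s)$ and $\res(\v)=\bi^\t=\res(\t)$ (the $\psi$-basis respects the eigenspace decomposition, being a graded cellular basis refining the $e(\bi)$-decomposition). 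This pins down the residue constraints $\res(\u)=\res(\s)$ and $\res(\v)=\res(\t)$ in the statement. Putting these together with the $\Gedom$ from the previous paragraph yields $(\u,\v)\Gdom(\s,\t)$ together with all the stated side conditions.

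For the $\psi'$ half of the statement I would run the mirror-image argument: use the second line of Theorem~\ref{psi mn} relating $n_{\s\t}$ and $\psi'_{\u\v}$, the analogue of Corollary~\ref{L_k action} for the $n$-basis noted at the end of Section~2 (namely $n_{\s\t}L_k=\cont_{\t'}(k)n_{\s\t}$ plus strictly-$\Gdom$ terms), and the fact that $\deg\psi'_{\u\v}=\codeg\u'+\codeg\v'$ to extract the degree constraint $\codeg\a'+\codeg\b'=\codeg\s'+\codeg\t'+2$. The residue bookkeeping is identical since $y_r$ is central for the $e(\bi)$'s in either presentation.

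The main obstacle I anticipate is bookkeeping rather than conceptual: keeping straight, across two changes of basis (from the $\psi$-basis to the $m$-basis and back), that the error terms stay within $\Gedom(\s,\t)$ and that no spurious term of degree $\deg\s+\deg\t$ survives. The degree argument handles the latter cleanly, but one must be slightly careful that Theorem~\ref{psi mn}'s unitriangularity is with respect to the \emph{strong} order $\Gedom$ and not merely $\gedom$, so that composing two such triangular changes of basis does not enlarge the support; this is exactly why the earlier sections invested in proving the strong-dominance refinements. Everything else is a formal consequence of Theorem~\ref{BK iso} expressing $y_r$ through $L_r$, so the corollary really is, as the authors say, an exercise once Theorem~\ref{psi mn} and Corollary~\ref{L_k action} are in hand.
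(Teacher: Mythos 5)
Your proof is correct and is essentially the route the paper intends (the corollary is left there as an exercise in combining Theorem~\ref{BK iso} with the strong-dominance machinery): you expand through Theorem~\ref{psi mn} and Corollary~\ref{L_k action} (and its $n$-basis analogue) to land in the span of the $\psi_{\u\v}$, respectively $\psi'_{\a\b}$, with indices $\Gedom(\s,\t)$, then use homogeneity of the $\psi$- and $\psi'$-bases to force the degree conditions and hence strictness, and the idempotent decomposition $e(\bi^\s)\psi_{\u\v}e(\bi^\t)=\delta_{\bi^\s,\res(\u)}\delta_{\bi^\t,\res(\v)}\psi_{\u\v}$ to force the residue conditions. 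Two cosmetic remarks: the relation $y_1^{(\Lambda,\alpha_{i_1})}e(\bi)=0$ plays no role, and the idempotent left over after substituting the Brundan--Kleshchev formula $y_re(\bi^\t)=(1-\xi^{-\res_\t(r)}L_r)e(\bi^\t)$ (or its $\xi=1$ analogue) is harmless, since $e(\bi^\t)$ acts on each $\psi_{\u\v}$ by $0$ or $1$ (alternatively it is itself a polynomial in $L_1,\dots,L_n$, so Corollary~\ref{L_k action} applies again), hence it does not enlarge the $\Gedom$-support and your reduction to $\psi_{\s\t}L_r$ is legitimate.
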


\section{Graded induction and Graded Specht modules}
Before starting the proof of our Main Theorem we recall the facts that we need about $\Z$-graded
algebras and the construction of the graded Specht modules and their duals.

Suppose that $A=\bigoplus_{k\in\Z}A_k$ is a $\Z$-graded algebra. If
$a\in A_k$ then $a$ is homogeneous of \textbf{degree} $\deg a=k$. If
$M=\bigoplus_k M_k$ is a graded $A$-module let $M\<s\>$ be the
graded $R$-module obtained by shifting the grading on $M$ upwards by
$s$; that is, $M\<s\>_k=M_{k-s}$, for $k\in\Z$. Let $\Mod A$ be the
category of finite dimensional graded (right) $A$-module with
homomorphisms being the degree preserving maps (of degree zero).  If
$A$ has a degree preserving anti-involution $\star$  then the
\textbf{contragredient dual} of $M$ is the graded $A$-module
$$M^\circledast = \bigoplus_{d\in\Z}\Hom_A(M\<d\>,K)$$ where the
action of $A$ is given by $(fa)(m)=f(ma^\star)$, for all $f\in
M^\circledast$, $a\in A$ and~$m\in M$.

\subsection{Graded Specht modules}
Following the standard construction from the theory of (graded) cellular
algebras, the two graded cellular bases of Theorem~\ref{psi bases} define
graded cell modules for~$\HL$. More explicitly,
for each multipartition $\bmu\in\Parts$ the \textbf{graded Specht module} $S^\bmu$
and the \textbf{graded dual Specht module} $S_\bmu$ are the graded $\HL$-modules such that
$$S^\bmu\<\deg\tmu\>=(\psi_{\tmu\tmu}+\Hmu)\HL\quad\text{and}\quad
  S_\bmu\<\codeg\t_{\bmu'}\>=(\psi'_{\tmu\tmu}+\Hmu')\HL,$$
where $\Hmu$ is the two-sided ideal of $\HL$ spanned by the elements
$\psi_{\u\v}$, where $(\u,\v)\in\SStd(\bnu)$, $\bnu\gdom\bmu$ and
$\Hmu'$ is spanned by  the $\psi'_{\u\v}$,  for $(\u,\v)\in\SStd(\bnu)$
with $\bnu\gdom\bmu$. Thus, $S^\bmu$ has a natural basis
$\set{\psi_\t|\t\in\Std(\bmu)}$, and $S_\bmu$ has a basis
$\set{\psi'_\t|\t\in\Std(\bmu)}$, where the action on both modules in induced
by the action of $\HL$ upon the $\psi$ and $\psi'$ bases of $\HL$,
respectively, and $\deg\psi_\t=\deg\t$, $\deg\psi'_\t=\codeg\t'$, for $\t\in\Std(\bmu)$.
In particular, by \cite[Prop.~6.19]{HM:GradedCellular}, $S^{\bmu}$ is isomorphic to
the graded Specht module defined by Brundan, Kleshchev and Wang
\cite{BKW:GradedSpecht}. See \cite[\Sect2]{HM:GradedCellular} for more
details.

To explain the relationship between the graded Specht module and its dual we need to recall
the description of the blocks of~$\HL$.  Let $\beta\in Q_+$ be a positive root with
$\sum_{i\in I}(\Lambda_i,\beta)=n$ and
set $I^\beta=\set{\bi\in I^n|\alpha_{i_1}+\dots+\alpha_{i_n}=\beta}$ and
$e_\beta=\sum_{\bi\in I^\beta}e(\bi).$
By \cite[Theorem~2.11]{LM:AKblocks} and \cite[Theorem~1]{Brundan:degenCentre},
$$\HL=\bigoplus_{\beta\in Q_+,\,I^\beta\ne\emptyset}\HL[\beta], \quad\text{where }
           \HL[\beta]=e_\beta\HL,$$
where $\HL[\beta]$ is an indecomposable two-sided ideal of $\HL$ whenever $e_{\beta}\neq 0$. Following
Brundan and Kleshchev~\cite[(3.4)]{BKW:GradedSpecht}, if
$\beta\in Q_+$ with $e_{\beta}\neq 0$ then define the \textbf{defect} of $\HL[\beta]$ by
$$\defect\beta=(\Lambda,\beta)-\frac12(\beta,\beta).$$
Then $\defect\beta\ge0$. Let $\Parts[\beta]=\set{\bmu\in\Parts|\imu\in I^\beta}$ and suppose
that $\bmu\in\Parts[\beta]$. Then, by \cite[Prop.~6.19]{HM:GradedCellular},
$$S^\bmu\cong S_{\bump}^\circledast\<\defect\beta\>,$$
where $S_{\bump}^\circledast$ is the graded dual of $S_{\bump}$. This justifies calling
$S_\bmu$ a graded dual Specht module. For the rest of this section we fix $\beta\in
Q_+$ and $\bmu\in\Parts[\beta]$ such that $e_{\beta}\neq 0$.

To prove our Main Theorem we need to compute $\iInd S^\bmu$, for $i\in I$.
To do this we will use another construction of the graded Specht modules
which, up to shift, realizes them as submodules of $\HL$. Recall that $\tmu$
is the unique standard $\bmu$-tableau such that $\tmu\gedom\t$ for all
$\t\in\Std(\bmu)$. Let $\tlmu$ be the unique standard $\bmu$-tableau such
that $\t\gedom\tlmu$ for all $t\in\Std(\bmu)$. Then $\tlmu$ is the standard
$\bmu$-tableau which has the numbers $1,2,\dots,n$ entered in order down
its columns in the components from right to left. Equivalently, $\tlmu$ is
the tableau conjugate to $\t^{\bump}$. Define $w_\bmu=d(\tlmu)\in\Sym_n$.
It is easy to check that $w_\bmu^{-1}=w_{\bump}=d(\t_{\bump})$.

\begin{Defn}\label{zmu defn}
    Suppose that $\bmu\in\Parts[\beta]$, for $\beta\in Q_+$. Define
    $\zbmu=y'_\bump\psi_{w_\bump}e_\bmu y_\bmu $.
\end{Defn}

Consulting the definitions,
$\zbmu=\psi'_{\tmup\tlmup}\psi_{\tmu\tmu}
      =\psi'_{\tmup\tmup}\psi_{\tlmu\tmu}$. The connection between these
elements and the graded Specht modules is the following.

\begin{Lemma}[\protect{\cite[\Sect6.4]{HM:GradedCellular}}]\label{submodules}
    Suppose that $\bmu\in\Parts[\beta]$. Then, as graded $\HL$-modules,
    $$S^\bmu\<\defect\beta+\codeg\t_{\bmu}\>\cong \zbmu\HL\quad\text{and}\quad
      S_{\bmu'}\<\defect\beta+\deg\tmu\>\cong \zbmu^{\star}\HL.$$
\end{Lemma}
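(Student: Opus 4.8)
The plan is to establish the two isomorphisms by recognizing $\zbmu\HL$ and $\zbmu^\star\HL$ as, up to a degree shift, concrete realizations of the graded cell modules $S^\bmu$ and $S_{\bmu'}$ already constructed from the cellular bases $\{\psi_{\s\t}\}$ and $\{\psi'_{\s\t}\}$ in Theorem~\ref{psi bases}. First I would recall from Definition~\ref{psis} and the discussion preceding this lemma that $\zbmu=\psi'_{\tmup\tmup}\psi_{\tlmu\tmu}$, so that $\zbmu$ lives in the product $\HL\cdot\psi_{\tmu\tmu}\cdot\HL$; the point is that right multiplication by $\HL$ on $\zbmu$ factors through the right action of $\HL$ on the cell module generated by $\psi_{\tmu\tmu}+\Hmu$. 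Concretely, I would show that the map $\psi_{\tmu\tmu}+\Hmu \mapsto \zbmu$ extends to a well-defined, surjective graded $\HL$-module homomorphism $S^\bmu\<\deg\tmu\>\to\zbmu\HL$: well-definedness amounts to checking that left multiplication by $\psi'_{\tmup\tlmup}$ kills the ideal $\Hmu$, which follows from Corollary~\ref{psi strong} (or its consequences), since any $\psi_{\u\v}$ with $\Shape(\u)=\Shape(\v)=\bnu\gdom\bmu$ satisfies $\psi'_{\tmup\tlmup}\psi_{\u\v}\ne0$ only if $\tmup{}'=\tlmu\gedom\u$, forcing $\bnu\ledom\bmu$, a contradiction with $\bnu\gdom\bmu$.

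Next I would compute the graded dimension of $\zbmu\HL$ and compare it with that of $S^\bmu$. Since $\HL$ is a graded cellular algebra with cellular basis $\{\psi_{\u\v}\}$, a standard filtration argument (filtering $\HL$ by the ideals $\Hmu[\bnu]$ associated to the dominance order) shows that $\psi'_{\tmup\tlmup}\HL e_\bmu y_\bmu\HL$ — equivalently $\zbmu\HL$ — is spanned by the images of the $\psi'_{\tmup\tlmup}e_\bmu y_\bmu\psi_{d(\t)}$, which by the same non-vanishing/triangularity input reduce to a spanning set indexed by $\t\in\Std(\bmu)$. Combined with the surjection above and the fact that $\dim_q S^\bmu=\sum_{\t\in\Std(\bmu)}q^{\deg\t}$, counting dimensions forces the surjection to be an isomorphism, and tracking the grading shift (the factors $y'_\bump$ and $\psi_{w_\bump}$ contribute, via Theorem~\ref{psi bases}, the shift $\defect\beta+\codeg\t_\bmu$ relative to $S^\bmu\<\deg\tmu\>$; here one uses the identity $S^\bmu\cong S_{\bump}^\circledast\<\defect\beta\>$ recorded just before the lemma to convert $\deg$-data into $\codeg$-data) gives the stated shift. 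Applying the graded anti-involution $\star$, which sends $\zbmu$ to $\zbmu^\star$ and interchanges the roles of the $\psi$- and $\psi'$-bases (hence of $\bmu$ and $\bmu'$), yields the second isomorphism $S_{\bmu'}\<\defect\beta+\deg\tmu\>\cong\zbmu^\star\HL$ for free.

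The main obstacle I anticipate is the bookkeeping of the degree shifts: one must show that $\deg\zbmu$ together with the degrees of the right-module generators matches $\deg\psi_\t$ up to the single overall shift $\defect\beta+\codeg\t_\bmu-\deg\tmu$, uniformly in $\t\in\Std(\bmu)$. This requires combining the degree formulas $\deg\psi_{\s\t}=\deg\s+\deg\t$ and $\deg\psi'_{\s\t}=\codeg\s'+\codeg\t'$ from Theorem~\ref{psi bases}, the relation between $S^\bmu$ and $S_{\bump}^\circledast$, and the combinatorial identities $w_\bmu^{-1}=d(\t_\bump)$ and $\tlmu=(\t^\bump)'$; the $\star$-anti-involution is degree-preserving, so it does not disturb the shifts, but verifying the single numerical identity $\deg\tmu+\codeg\t_\bmu=\deg\tmup+\codeg\t_\bmup$ (equivalently that $\deg\t+\codeg\t'$ is constant on $\Std(\bmu)$, equal to $\defect\beta$) is the crux. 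This last fact is itself part of the graded cellular structure from \cite{HM:GradedCellular} and \cite{BKW:GradedSpecht}, so I would simply invoke it rather than reprove it.
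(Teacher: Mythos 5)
The paper itself offers no proof of this lemma---it is quoted verbatim from \cite[\Sect6.4]{HM:GradedCellular}---so your proposal has to stand on its own. Its first half does: since $\zbmu=\psi'_{\tmup\tlmup}\psi_{\tmu\tmu}$, Corollary~\ref{psi strong} gives that $\psi'_{\tmup\tlmup}\psi_{\u\v}\ne0$ only if $(\tlmup)'=\tmu\gedom\u$ (note the relevant conjugate is of the \emph{second} index $\tlmup$, not of $\tmup$ as you wrote, although either condition forces $\bmu\gedom\Shape(\u)$), so left multiplication by $\psi'_{\tmup\tlmup}$ annihilates $\Hmu$ and $\psi_{\tmu\tmu}h+\Hmu\mapsto\zbmu h$ is a well-defined surjective homomorphism of graded right $\HL$-modules onto $\zbmu\HL$, up to shift. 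This is the right mechanism, and it is the same strong-dominance machinery the paper develops for exactly such arguments.

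The genuine gap is injectivity. Your ``dimension count'' produces only upper bounds: the surjection gives $\dim\zbmu\HL\le\dim S^\bmu$, and your spanning set $\set{\zbmu\psi_{d(\t)}|\t\in\Std(\bmu)}$ gives the same upper bound a second time; two upper bounds cannot force the surjection to be an isomorphism, and as written your argument does not even rule out $\zbmu=0$, let alone a proper kernel (which a priori could be any graded submodule of $S^\bmu$, since $S^\bmu$ need not be irreducible). What is needed is the lower bound, i.e.\ the linear independence of the elements $\zbmu\psi_{d(\t)}$, and this is where the cited source does real work---ultimately resting on the ungraded fact that $n_{\bmu'}T_{w_\bmu}m_\bmu$ generates a copy of the Specht module (cf.\ Corollary~\ref{tilting}, \cite{M:tilting}) together with a seminormal-form/base-change argument of the kind used in Section~2, or equivalently a triangularity statement such as Theorem~\ref{psi mn} exhibiting a nonvanishing leading term. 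A secondary slip: the degree identity you plan to invoke is misstated. The true statement, and the one the shift computation needs, is $\deg\t+\codeg\t=\defect\beta$ for $\t\in\Std(\bmu)$ (\cite[Lemma~3.12]{BKW:GradedSpecht}, used in the proof of the Main Theorem); the quantity $\deg\t+\codeg\t'$ is in general neither constant nor equal to $\defect\beta$. With the correct identity the bookkeeping does close: $\deg\zbmu=\codeg\t_\bmu+\codeg\tmu+2\deg\tmu$ by Theorem~\ref{psi bases}, so the required shift is $\deg\zbmu-\deg\tmu=\codeg\t_\bmu+\defect\beta$, as stated in the lemma.
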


Hence, to determine $\iInd S^\bmu$ it suffices to describe the $\HL[n+1]$-module $\iInd z_\bmu\HL$. To
do this we adapt ideas which Ryom-Hansen~\cite{RyomHansen:GradedTranslation} used to describe the
induced Specht modules of~$\HSym$.

\subsection{Graded induction of Specht modules}
We are now ready to prove our main theorem. We start by describing the
$i$-induction  functors for $\HL$ more explicitly.

Recall that $I=\Z/e\Z$. For each $i\in I$ define
$$e_{i,n}=\sum_{\bj\in I^n}e(\bj\vee i)\in\HL[n+1].$$
Then $\sum_{i\in I}e_{i,n}=\sum_{\bi\in I^{n+1}}e(\bi)$ is the identity element of $\HL[n+1]\cong\R[n+1]$.
Let $\Mod\HL$ be the category of finite dimensional graded $\HL$-modules, with morphisms being
$\HL$-module homomorphisms of degree zero.

\begin{Lemma}\label{embedding}
  Suppose that $R=K$ is field and that $i\in I$. Then
  there is a (non-unital) embedding of graded algebras
  $\HL\hookrightarrow\HL[n+1]$ given by
  $$e(\bj)\mapsto e(\bj\vee i),\quad
    y_r\mapsto e_{i,n}y_r\quad\text{and}\quad
    \psi_s\mapsto e_{i,n}\psi_s,$$
    for $\bj\in I^n$, $1\le r\le n$ and $1\le s<n$. This  map induces an
    exact functor
    $$F_i\map{\Mod\HL}{\Mod\HL[n+1]}; M\mapsto M\otimes_{\HL}e_{i,n}\HL[n+1].$$
  Moreover, $F_i = \iInd_{\HL}^{\HL[n+1]}$ is the graded induction
  functor from $\Mod\HL$ to $\Mod\HL[n+1]$.
\end{Lemma}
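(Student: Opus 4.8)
The plan is to establish in turn the four assertions in the lemma: that the displayed rule extends to a homogeneous algebra homomorphism $\phi_i\map{\HL}{\HL[n+1]}$, that $\phi_i$ is injective, that the induced functor $F_i$ is exact, and that $F_i=\iInd$. For the first, I would check that the proposed images of the generators satisfy the defining relations of $\R\cong\HL$ in Definition~\ref{quiver Hecke relations}, so that $\phi_i$ exists by the universal property of the presentation. The verification is routine and hinges on two points. The idempotent $e_{i,n}=\sum_{\bj\in I^n}e(\bj\vee i)$ of $\HL[n+1]$ commutes with $y_r$ for $1\le r\le n$ and with $\psi_s$ for $1\le s<n$: indeed $\psi_s e_{i,n}=\sum_{\bj\in I^n}\psi_s e(\bj\vee i)=\sum_{\bj\in I^n}e\bigl((s_s\cdot\bj)\vee i\bigr)\psi_s=e_{i,n}\psi_s$, since the transposition $s_s$ permutes only the first $n$ positions when $s<n$. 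And for $\bj\in I^n$ the residue sequences $\bj$ and $\bj\vee i$ agree in positions $1,\dots,n$, so that, after multiplying through by the commuting idempotent $e_{i,n}$, every relation in Definition~\ref{quiver Hecke relations} other than the cyclotomic one transfers verbatim---each such relation being supported on two or three consecutive positions, all lying in $\{1,\dots,n\}$ when only generators of $\HL$ occur---while the cyclotomic relation $y_1^{(\Lambda,\alpha_{j_1})}e(\bj)=0$ transfers because $(\bj\vee i)_1=j_1$ and the weight $\Lambda$ is common to $\HL$ and $\HL[n+1]$. Equivalently and more slickly, $\phi_i(a)=e_{i,n}\iota(a)=\iota(a)e_{i,n}$, where $\iota\map{\HL}{\HL[n+1]}$ is the natural unital graded embedding of \cite{HM:GradedCellular} (sending the KLR generators $y_r,\psi_s$ of $\HL$ to those of $\HL[n+1]$ and $e(\bj)$ to $\sum_{k\in I}e(\bj\vee k)$), and $e_{i,n}$ commutes with $\iota(\HL)$, so that $a\mapsto e_{i,n}\iota(a)$ is an algebra map agreeing with the displayed rule on generators. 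Finally $\phi_i$ is homogeneous of degree~$0$ since $\deg\phi_i(e(\bj))=0$, $\deg\phi_i(y_r)=2$ and $\deg\phi_i\bigl(\psi_s e(\bj)\bigr)=-a_{j_s,j_{s+1}}=\deg\psi_s e(\bj)$, and it is non-unital because $\phi_i(1)=e_{i,n}$.

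For injectivity and exactness I would import from the main theorem of \cite{HM:GradedCellular} that $\HL[n+1]$ is a free left $\HL$-module via $\iota$. Since $\iota(\HL)$ commutes with each $e_{k,n}$, the decomposition $\HL[n+1]=\bigoplus_{k\in I}e_{k,n}\HL[n+1]$ is one of $(\HL,\HL[n+1])$-bimodules (with $\HL$ acting on the left through $\iota$), and on the summand $e_{i,n}\HL[n+1]$ this left action coincides with the one through $\phi_i$, since $\phi_i(a)\,e_{i,n}x=e_{i,n}\iota(a)\,e_{i,n}x=\iota(a)\,e_{i,n}x$. Thus $e_{i,n}\HL[n+1]$ is a direct summand of a free left $\HL$-module, hence projective, which gives exactness of $F_i(M)=M\otimes_\HL e_{i,n}\HL[n+1]$; and injectivity of $\phi_i$ is read off from the explicit $\HL$-basis of $e_{i,n}\HL[n+1]$ in \cite{HM:GradedCellular}, in which $e_{i,n}$ spans a free rank-one $\HL$-summand on which $a\mapsto\phi_i(a)=a\cdot e_{i,n}$ is the left regular representation. (When $e_{i,n}=0$ both $\phi_i$ and $F_i$ vanish identically.)

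Finally, $F_i=\iInd$ comes out of the same decomposition: $\Ind M=M\otimes_\HL\HL[n+1]=\bigoplus_{k\in I}\bigl(M\otimes_\HL e_{k,n}\HL[n+1]\bigr)$ is precisely the splitting $\Ind=\bigoplus_k\iInd$ of the introduction into the parts of $\HL[n+1]$ on which the last residue equals $k$ (a refinement of the block decomposition), so $\iInd M=M\otimes_\HL e_{i,n}\HL[n+1]$ with $\HL$ acting through $\iota$; since that action equals the one through $\phi_i$, this is exactly $F_i(M)$, naturally in $M$. The only genuinely external input is the freeness statement from \cite{HM:GradedCellular}; the rest is bookkeeping with the idempotent $e_{i,n}$, and the one place that demands care is the relation check --- confirming that every KLR relation among generators of $\HL$ stays supported within positions $1,\dots,n$, so that it survives the passage into $\HL[n+1]$.
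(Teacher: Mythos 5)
Your overall route coincides with the paper's, just written out in much more detail: you verify the quiver Hecke relations for the proposed images of the generators using that $e_{i,n}$ commutes with them (the paper compresses this to one sentence), and you obtain exactness of $F_i$ and the identification $F_i=\iInd$ from the bimodule decomposition $\HL[n+1]=\bigoplus_{k\in I}e_{k,n}\HL[n+1]$ together with $\sum_{k\in I}e_{k,n}=1$, with the freeness of $\HL[n+1]$ over $\HL$ from \cite{HM:GradedCellular} supplying projectivity of the summand $e_{i,n}\HL[n+1]$. That is exactly the content of the paper's closing remark that ``$e_{i,n}$ is idempotent and $\sum_{i\in I}e_{i,n}$ is the identity element'', and making the projectivity step explicit is a genuine improvement in readability.

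The one place you diverge is injectivity, and there your justification has a real gap. You assert that \cite{HM:GradedCellular} provides an explicit $\HL$-basis of $e_{i,n}\HL[n+1]$ in which $e_{i,n}$ spans a free rank-one summand, so that $a\mapsto\iota(a)e_{i,n}$ is a copy of the regular representation. No statement of that form is in \cite{HM:GradedCellular}, and it cannot hold in general, because $e(\bj)\ne0$ in $\HL$ does not force $e(\bj\vee i)\ne0$ in $\HL[n+1]$: for example, with $\ell=1$, $e=3$, $\Lambda=\Lambda_0$ and $n=2$, the sequence $(0,2)$ is a residue sequence for $\HL[2]$ but $(0,2,2)$ is not a residue sequence for $\HL[3]$, so the map for $i=2$ kills $e(0,2)\ne0$ even though $e_{2,2}\ne0$; in particular your fallback remark (that degeneration only occurs when $e_{i,n}=0$) does not cover this situation. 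The paper itself disposes of injectivity with a one-line appeal to Theorem~\ref{BK iso}, which is also too terse to rule out such examples, so the ``embedding'' part of the statement should be treated with care; the saving grace is that nothing in the sequel uses injectivity --- only the existence of the homogeneous homomorphism, the exactness of $F_i$, and the identification $F_i=\iInd$ are used, and for those your argument is complete and agrees with the paper's.
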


\begin{proof}
  The images of the homogeneous generators of $\HL$ under this embedding
  commute with $e_{i,n}$, which implies that this defines a non-unital degree preserving homomorphism
  from $\HL$ to~$\HL[n+1]$. This map is an embedding by Theorem~\ref{BK iso}. The remaining claims follow
  because $e_{i,n}$ is idempotent and $\sum_{i\in I}e_{i,n}$ is the
  identity element of~$\HL[n+1]$.
\end{proof}

We now fix a multipartition $\bmu\in\Parts$ and introduce the notation that
we need to prove our main result. Let $A_1,\dots,A_z$ be the addable
$i$-nodes of $\bmu$ ordered so that $A_{i+1}$ is below~$A_i$, for $1\le
i<z$. Finally, let $\balpha_i$ be the multipartition of~$n+1$ obtained by
adding $A_i$ to $\bmu$, so that $[\balpha_i]=[\bmu]\cup\{A_i\}$, for
$1\le i\le Z$. Then $\balpha_1\gdom\balpha_2\gdom\dots\gdom\balpha_z$ because
we arranged the nodes $A_1,\dots,A_z$ in ``downwards order''. For
notational convenience in what follows we set
$\balpha=\balpha_1$ and $\bomega=\balpha_z$.

The two extremal multipartitions $\balpha$ and $\bomega$ will be particularly
important in what follows: $\balpha$ is the most dominant multipartition obtained
from $\bmu$ by adding an $i$-node and $\bomega$ is the least dominant such
multipartition. Define $\zbmui=e_{i,n}\zbmu$ to be the image of $\zbmu$ under the
algebra embedding of Lemma~\ref{embedding}. Then by Definition~\ref{psis} and
Definition~\ref{zmu defn}
$$\zbmui = e_{i,n} y'_\bump\psi_{w_\bump} e_\bmu y_\bmu e_{i,n}.
$$
Therefore, by Lemma~\ref{submodules}
and Lemma~\ref{embedding}, we have the following.

\begin{Lemma}\label{induced}
  Suppose that $\bmu\in\Parts[\beta]$ and $i\in I$. Then there is an isomorphism of
  graded $\HL[n+1]$-modules,
  $$\iInd S^\bmu\<\defect\beta+\codeg\t_{\bmu}\>\cong \zbmui\HL[n+1].$$
\end{Lemma}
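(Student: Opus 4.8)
**Proof plan for Lemma 3.28 (the final statement: $\iInd S^\bmu\langle\defect\beta+\codeg\t_\bmu\rangle\cong\zbmui\HL[n+1]$).**

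The claim is essentially a matter of combining the pieces already assembled in the excerpt, so I expect the proof to be short. The plan is to start from Lemma~\ref{submodules}, which identifies $S^\bmu\langle\defect\beta+\codeg\t_\bmu\rangle$ with the right ideal $\zbmu\HL$ of $\HL$, and then to transport this isomorphism along the algebra embedding $\HL\hookrightarrow\HL[n+1]$ of Lemma~\ref{embedding}. Concretely, first I would recall that by definition the induction functor $\iInd=F_i$ sends a module $M$ to $M\otimes_{\HL}e_{i,n}\HL[n+1]$, and that for a right ideal of the form $a\HL$ inside $\HL$ we have $(a\HL)\otimes_{\HL}e_{i,n}\HL[n+1]\cong (a\HL)e_{i,n}\HL[n+1]$; since $a\HL\cdot e_{i,n}\HL[n+1]=a e_{i,n}\HL[n+1]$ (using that the image of $\HL$ commutes with $e_{i,n}$, as recorded in the proof of Lemma~\ref{embedding}), and since $e_{i,n}$ is the image of $1_{\HL}$ under the embedding, this last module is just $(e_{i,n}\zbmu)\HL[n+1]=\zbmui\HL[n+1]$.

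The second step is to keep track of the grading shift. Applying the exact, degree-preserving functor $F_i$ to the graded isomorphism $S^\bmu\langle\defect\beta+\codeg\t_\bmu\rangle\cong\zbmu\HL$ of Lemma~\ref{submodules} produces
$$\iInd\bigl(S^\bmu\langle\defect\beta+\codeg\t_\bmu\rangle\bigr)\cong\iInd(\zbmu\HL)\cong\zbmui\HL[n+1],$$
and since $\iInd$ commutes with shifts, i.e.\ $\iInd(M\langle d\rangle)\cong(\iInd M)\langle d\rangle$, the left-hand side equals $(\iInd S^\bmu)\langle\defect\beta+\codeg\t_\bmu\rangle$, giving the stated formula. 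I would also note, as the excerpt already does just before the statement, that $\zbmui=e_{i,n}y'_\bump\psi_{w_\bump}e_\bmu y_\bmu$ equals $e_{i,n}y'_\bump\psi_{w_\bump}e_\bmu y_\bmu e_{i,n}$ because $e_\bmu y_\bmu$ lies in the image of $\HL$ and hence commutes with $e_{i,n}$; this symmetric form is the one that will be convenient later.

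There is no real obstacle here: the only point requiring a line of care is checking that tensoring the cyclic right ideal $\zbmu\HL$ up to $\HL[n+1]$ really does give $\zbmui\HL[n+1]$ rather than something larger, which follows from right-exactness of $-\otimes_{\HL}e_{i,n}\HL[n+1]$ applied to the surjection $\HL\twoheadrightarrow\zbmu\HL$ together with the identification $e_{i,n}\HL\otimes_{\HL}e_{i,n}\HL[n+1]\cong e_{i,n}\HL[n+1]$ coming from Lemma~\ref{embedding}. All the substantive work—constructing $\zbmu$, proving Lemma~\ref{submodules}, and establishing that the embedding of Lemma~\ref{embedding} realizes $\iInd$—has already been done, so this lemma is a bookkeeping corollary and I would present it as such.
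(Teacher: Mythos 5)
Your proposal is correct and is essentially the paper's own argument: the paper deduces the lemma immediately by combining Lemma~\ref{submodules} with Lemma~\ref{embedding}, exactly as you do, including the observation that $\zbmui=e_{i,n}\zbmu=e_{i,n}\zbmu e_{i,n}$ is the image of $\zbmu$ under the embedding. The only point to tighten is that identifying $\iInd(\zbmu\HL)$ with $\zbmui\HL[n+1]$ needs the exactness of $F_i$ (freeness of $\HL[n+1]$ over $\HL$) applied to the inclusion $\zbmu\HL\hookrightarrow\HL$ -- which you do invoke -- since the right-exactness remark in your final paragraph only identifies the image of the natural map, not its injectivity.
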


To prove our main theorem we show that $\zbmui\HL[n+1]$ has a filtration by
graded Specht modules. For each $1\le k\le z$, define
$\talpmu$ to be the unique standard $\alpha_k$-tableau such that $(\talpmu)_n=\tlmu$.

\begin{Prop}\label{dominance}
  Suppose that $\bmu\in\Parts[\beta]$ and $i\in I$. Then
  $$\Bmu=\set{e_{i,n}y'_{\bmu'}\psi_{\talpmu\t}|
                    \t\in\Std(\balpha_k)\,\text{ for }1\le k\le z}$$
  is a basis of $\zbmui\HL[n+1]$.
\end{Prop}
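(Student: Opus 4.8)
The plan is to first show that $\Bmu$ is a spanning set for $\zbmui\HL[n+1]$ and then count dimensions. For the spanning statement, I would begin by unwinding the definition $\zbmui = e_{i,n}y'_\bump\psi_{w_\bump}e_\bmu y_\bmu e_{i,n}$ and observing that $e_\bmu y_\bmu e_{i,n} = e_{i,n}' \cdot(\text{something related to }\talpmu[1])$ --- more precisely, the key point is that adding the node $A_k$ to the tableau $\tmu$ in the appropriate way produces $\talpmu$, and that $\psi_{\tmu\tmu}$ viewed inside $\HL[n+1]$ via the embedding of Lemma~\ref{embedding} can be rewritten, using the defining relations of $\R[n+1]$ and the homogeneous cellular basis of Theorem~\ref{psi bases}, as a linear combination of $\psi_{\talpmu\t}$-type elements. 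Right multiplication by $\HL[n+1]$ then visibly lands in the span of $\{e_{i,n}y'_{\bmu'}\psi_{\talpmu\t} \mid \t\in\Std(\balpha_k),\ 1\le k\le z\}$, using the cellular property that $\psi_{\s\t}h$ is a linear combination of $\psi_{\s\v}$ (modulo more dominant cells, which here are controlled because the only multipartitions of $n+1$ restricting to $\bmu$ are the $\balpha_k$). This shows $\zbmui\HL[n+1]$ is spanned by $\Bmu$.

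For linear independence I would pass to the semisimple setting and use the strong-dominance machinery. Concretely, I would compute $\dim_K \zbmui\HL[n+1]$ independently: by Lemma~\ref{induced} it equals $\dim_K \iInd S^\bmu = (\dim_K S^\bmu)\cdot\operatorname{rank}_{\HL}\HL[n+1]$, but more usefully, by Frobenius reciprocity / the branching rule for Specht modules (Theorem~4.11 of \cite{BKW:GradedSpecht} together with its Grothendieck-group consequence mentioned in the introduction), $\dim_K\iInd S^\bmu = \sum_{k=1}^z \dim_K S^{\balpha_k} = \sum_{k=1}^z \#\Std(\balpha_k)$. Since $|\Bmu| \le \sum_{k=1}^z\#\Std(\balpha_k)$ by construction (the map $(k,\t)\mapsto e_{i,n}y'_{\bmu'}\psi_{\talpmu\t}$ has domain of exactly that size), the spanning statement forces $\Bmu$ to be a basis, provided the listed elements are distinct and the count is sharp. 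To see sharpness I would invoke Theorem~\ref{psi mn} to rewrite each $e_{i,n}y'_{\bmu'}\psi_{\talpmu\t}$ in terms of the $m_{\u\v}$ basis of $\HK[n+1]$ (after choosing a modular system as in \cite[\Sect4.2]{HM:GradedCellular} and reducing from the semisimple case), and then Corollary~\ref{psi strong} / Corollary~\ref{tilting}: the leading term of $e_{i,n}y'_{\bmu'}\psi_{\talpmu\t}$ with respect to $\Gedom$ is $m_{\talpmu\t}$ up to a nonzero scalar, and these leading terms are pairwise distinct as $(k,\t)$ varies since the $\talpmu$ and $\t$ are distinct standard tableaux. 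Triangularity with respect to $\Gedom$ then gives linear independence over $\K$, hence over $K$ after reduction, hence over $R=K$.

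The main obstacle I anticipate is the first step: showing cleanly that $\zbmui\HL[n+1]$ is \emph{spanned} by the $e_{i,n}y'_{\bmu'}\psi_{\talpmu\t}$, because $y_\bmu e_{i,n}$ does not literally equal $e'_{\text{stuff}}y'_{\text{stuff}}$ for the $\balpha_k$ --- one must carefully track how the extra node $A_k$ enters, and in particular show that the ``wrong'' idempotents $e(\bi)$ (those $\bi\in I^{n+1}$ not of the form $\res(\t)$ for a standard $\balpha_k$-tableau) kill $\zbmui$ on the right, or that the corresponding basis elements are redundant. This is where Ryom-Hansen's construction is adapted: the element $y'_{\bmu'}$ on the left and the choice of $\psi_{w_\bump}$ conspire to ensure that only the addable-$i$-node shapes survive. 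I would handle this by a degree/residue argument combined with the cellular filtration of $\HL[n+1]$, reducing the claim to the identity $\zbmui e(\bi) = 0$ unless $\bi = \res(\talpmu\t)\vee$-type residue sequence for some $k$, which in turn follows from the relation $y_1^{(\Lambda,\alpha_{i_1})}e(\bi)=0$ and the explicit form of $e_\bmu y_\bmu$ from Lemma~\ref{psimu}. Once spanning is established, the dimension count closes the argument with no further difficulty.
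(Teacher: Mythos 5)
Your overall architecture is the paper's: show that $\zbmui\HL[n+1]$ is contained in the span of $\Bmu$, and then use $\dim\iInd S^\bmu=\sum_{k=1}^z\#\Std(\balpha_k)=\#\Bmu$ (this is exactly how the paper argues, citing \cite[Cor.~5.8]{BK:GradedDecomp} together with Lemma~\ref{induced}). Note that once the containment and the count are in place, $\Bmu$ is automatically a basis, so your separate ``sharpness'' argument via leading terms is redundant --- and as stated it is also doubtful, since left multiplication by $e_{i,n}y'_{\bmu'}$ need not preserve the leading $m_{\u\v}$-coefficient of $\psi_{\talpmu\t}$ with respect to $\Gedom$.

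The genuine gap is in the spanning step, precisely at the obstacle you flag. Expanding $\zbmui h$ cellularly produces terms $e_{i,n}y'_{\bmu'}\psi_{\s\t}$ in which the shape $\blam=\Shape(\s)$ may be strictly more dominant than every $\balpha_k$ and, crucially, the \emph{first} tableau $\s$ is arbitrary: your remark that ``the only multipartitions of $n+1$ restricting to $\bmu$ are the $\balpha_k$'' does not control these higher cells, because $\s_n$ need not equal $\tlmu$ nor even have shape $\bmu$. Your proposed repair --- reducing to ``$\zbmui e(\bi)=0$ unless $\bi$ is an allowed residue sequence,'' via the cyclotomic relation and Lemma~\ref{psimu} --- cannot eliminate these terms: residue sequences do not separate standard tableaux (tableaux of different shapes share residue sequences), and the unwanted $\psi_{\s\t}$ carry perfectly admissible idempotents on the right, so no such vanishing is available. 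What is actually needed, and what the paper does, is a two-sided dominance squeeze using the strong-dominance machinery: write $m_{\tlmu\tmu}=m_{\tlmu^+(\tmu)^+}$ under the ungraded embedding and apply Theorem~\ref{psi mn} to get terms with $\Shape(\s_n)\gedom\bmu$; separately, expand $e_{i,n}e'_{\bmu'}y'_{\bmu'}$ as a combination of $\psi'_{\u\v}$ with $\u,\v\gedom(\tmup)^+$ (Theorem~\ref{psi mn} together with Corollary~\ref{graded tilting}, since $e_{i,n}$ is a polynomial in the $y_r$), and then invoke the non-vanishing criterion $\psi'_{\u\v}\psi_{\s\t}\ne0$ only if $\v'\gedom\s$ (Corollary~\ref{psi strong}, resting on Corollary~\ref{tilting}) to force $\tlmu\gedom\s_n$. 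Combined with $\Shape(\s_n)\gedom\bmu$ this pins down $\s_n=\tlmu$, hence $\s=\talpmu[j]$ and $\blam=\balpha_j$, with $\res_\s(n+1)=i$ selecting the addable $i$-nodes. Without this input from Corollaries~\ref{tilting} and~\ref{psi strong}, your spanning claim does not go through.
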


\begin{proof} Let $I^\bmu$ be the vector space spanned by $\Bmu$. By \cite[Corollary
5.8]{BK:GradedDecomp} and Lemma \ref{induced}, we know that
$$ \dim\zbmui\HL[n+1]=\dim\iInd S^\bmu=\#\Bmu\ge\dim I^\bmu.  $$
Therefore, to prove the Proposition, it suffices to show that $\zbmui\HL[n+1]\subseteq I^\bmu$.

For each multipartition $\brho$ of $n$ let $\brho^+$ be the
multipartition of $n+1$ obtained by adding its lowest addable node.
Similarly, if $\s\in\Std(\brho)$ then define $\s^+$ to be the unique
standard $\brho^+$-tableau such that $\s^+_n=\s$. Note that it is
not necessarily true that $\res_{\s^+}(n+1)=i$ since the lowest
addable node of $\brho$ need not be an $i$-node.

Suppose that $h\in\HL[n+1]$. We want to show that $\zbmui h\in I^{\bmu}$.  By definition,
$\zbmui=e_{i,n}y'_{\bmu'}\psi_{\tlmu\tmu}e_{i,n}$. Moreover, under the natural embedding
$\HL\hookrightarrow\HL[n+1]$ of ungraded algebras,
$m_{\tlmu\tmu}=m_{\tlmu^+(\tmu)^+}$. Therefore, applying Theorem \ref{psi mn} twice,
$$\zbmui=e_{i,n}y'_{\bmu'}\Big(
  \sum_{\substack{(\s\t)\in\SStd(\Parts[n+1])\\(\s,\t)\Gedom\big(\tlmu^+,(\tmu)^+\big)}}a_{\s\t}\psi_{\s\t}
    \Big)e_{i,n},$$
for some $a_{\s\t}\in K$. Therefore, $\zbmui h$ is equal to a linear combination of elements of the form
$e_{i,n}y'_{\bmu'}\psi_{\s\t}$, where $\s,\t\in\Std(\blam)$, $\res_{\s}(n+1)=i$ and $\blam$ is a
multipartition $n+1$ such that $\blam\gedom\bmu^+$. In particular, $\Shape(\s_n)\gedom\bmu$.

On the other hand, since $e'_{\bmu'}y'_{\bmu'}=\psi'_{\tmup\tmup}$
and $e_{i,n}\in K[y_1,\cdots,y_{n+1}]$, applying Theorem \ref{psi
mn} and Corollary \ref{graded tilting} shows that
$e_{i,n}e'_{\bmu'}y'_{\bmu'}$ is equal to a linear combination of
elements $\psi'_{\u\v}$ such that $\u\gedom(\tmup)^+$ and
$\v\gedom(\tmup)^+$. By Corollary \ref{psi strong},
$\psi'_{\u\v}\psi_{\s\t}\neq 0$ only if $\v'\gedom\s$. Therefore,
$e_{i,n}e'_{\bmu'}y'_{\bmu'}\psi_{\s\t}\neq 0$ only if
$\tlmu\gedom\s_n$ because $\tlmu=(\tmup)'\gedom (\v')_n$.
Therefore, by the last paragraph, $\zbmui
h=e_{i,n}y'_{\bmu'}\psi_{\tlmu\tmu}e_{i,n}h=e_{i,n}e'_{\bmu'}y'_{\bmu'}\psi_{\tlmu\tmu}e_{i,n}h$
is equal to a linear combination of the elements of the form
$e_{i,n}y'_{\bmu'}\psi_{\s\t}$, where $\s,\t\in\Std(\blam)$,
$\Shape(\s_n)\gedom\bmu$, $\res_\s(n+1)=i$, and  $\blam$ is a
multipartition of $n+1$ such that $\tlmu\gedom\s_n$. It follows that
$\s=\t_{\bmu}^{\balpha_j}$ and $\blam=\balpha_j$, for some $j$ with
$1\le j\le z$. Hence, $\zbmui h\in I^\bmu$ and the proof is
complete.
\end{proof}

We can now prove our main theorem.

\begin{proof}[Proof of Main Theorem]
  By Lemma~\ref{induced}, $\iInd S^\bmu\<\defect\beta+\codeg\t_{\bmu}\>\cong\zbmui\HL[n+1]$ so it
  remains to show that $\zbmui\HL[n+1]$ has a suitable Specht filtration. For
  $0\le j\le n$ define
  $$I_j=\<e_{i,n}y'_{\bmu'}\psi_{\talpmu\t}\mid
           \t\in\Std(\balpha_k)\,\text{ for }\,1\le k\le j\>.$$
  Then $I_j$ is a submodule of $\zbmui\HL[n+1]$ by Theorem~\ref{psi bases}  and the proof of Proposition \ref{dominance}.
  Furthermore, the map
  $$S^{\balpha_j}\<2\codeg\t_{\bmu}+\deg\t_{\bmu}^{\balpha_j}\>\cong I_j/I_{j-1};
        \psi_\t\mapsto e_{i,n}y'_{\bmu'}\psi_{\t_{\bmu}^{\balpha_j}\t}+I_{j-1},$$
  for $\t\in\Std(\alpha_j)$ is an isomorphism since $\Bmu$ is linearly
  independent. Notice that this map can also be viewed as left
  multiplication:
  $$\psi_{\t_{\bmu}^{\balpha_j}\t}+\Hmu
           \mapsto e_{i,n}y'_{\bmu'}\psi_{\t_{\bmu}^{\balpha_j}\t}+I_{j-1},$$
  for $\t\in\Std(\balpha_j)$. To complete the proof we need to check that
  the degree shifts in the Specht filtration are as predicted by the Main
  Theorem. We have $\deg\talpmu[j]=\deg\tlmu+d_{A_j}(\bmu)$.
  Moreover, $\codeg\t_{\bmu}+\deg\tlmu=\defect\beta$ by
  \cite[Lemma~3.12]{BKW:GradedSpecht}. Therefore,
  $$
    2\codeg\t_{\bmu}+\deg\talpmu[j]=\defect\beta+\codeg\t_{\bmu}+d_{A_j}(\bmu),
  $$
  so that $S^{\balpha_j}\<\defect\beta+\codeg\t_{\bmu}+d_{A_j}(\bmu)\>\cong I_j/I_{j-1}$. 
  Applying Lemma \ref{induced} now completes the proof of our Main Theorem.
\end{proof}

Finally, we note that we also have the following description of the induced
graded dual Specht module. This can be proved either using the isomorphism
$S^\bmu\cong S_\bump^\circledast\<\defect\beta\>$ from
\cite[Prop.~6.19]{HM:GradedCellular}, or by essentially repeating the
argument above starting with the isomorphism
$S_{\bmu'}\<\defect\beta+\deg\tmu\>\cong z_\bmu^{\star}\HL$ from
Lemma~\ref{submodules}.  We leave the details to the reader.

\begin{Cor}\label{dual Spechts}
  Suppose that $\bmu\in\Parts$ and $i\in I$. Then $\iInd S_{\bmu'}$ has a
  filtration
  $$0=J_{z+1}\subset J_z\subset\dots\subset J_1=\iInd S_{\bmu'},$$
  such that $J_k/J_{k+1}\cong S_{\balpha_k}\<d^{A_k}(\bmu)\>$, for
  $1\le k\le z$, where $\{A_1<A_2<\cdots<A_z\}$ is the set of addable $i$-nodes of 
  $\bmu'$ ordered so that $\balpha_z\gdom\dots\gdom\balpha_1$, where
  $\balpha_k=\bmu'\cup\{A_k\}$ for $1\leq k\leq z$.
\end{Cor}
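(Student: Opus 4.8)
The plan is to run the proof of the Main Theorem again with every object replaced by its $\star$-dual: $S^\bmu$ by $S_{\bmu'}$, the element $\zbmu$ by $\zbmu^\star$, the cellular basis $\{m_{\s\t}\}$ by $\{n_{\s\t}\}$, the homogeneous basis $\{\psi_{\s\t}\}$ by $\{\psi'_{\s\t}\}$, and --- since passing to dual Specht modules interchanges the most and least dominant standard tableaux --- with the roles of the extremal tableaux reversed. Everything needed for this is already available: Theorem~\ref{psi mn}, Corollary~\ref{psi strong} and Corollary~\ref{graded tilting} each include the statement for the $n$/$\psi'$-bases, Lemma~\ref{psimu} gives $\psi'_{\tmu\tmu}$ in terms of the $n_{\u\v}$, the second isomorphism of Lemma~\ref{submodules} gives $S_{\bmu'}\<\defect\beta+\deg\tmu\>\cong\zbmu^\star\HL$, and the graded branching rule \cite[Theorem~4.11]{BKW:GradedSpecht} together with Frobenius reciprocity (equivalently \cite[Corollary~5.8]{BK:GradedDecomp} applied to $S_{\bmu'}$, or deduced from its statement for $S^\bmu$ using $S^\bmu\cong S_\bump^\circledast\<\defect\beta\>$) gives $\dim\iInd S_{\bmu'}=\sum_{k=1}^{z}\dim S_{\balpha_k}$.

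First I would apply the graded embedding $\HL\hookrightarrow\HL[n+1]$ of Lemma~\ref{embedding} to the isomorphism $S_{\bmu'}\<\defect\beta+\deg\tmu\>\cong\zbmu^\star\HL$, exactly as Lemma~\ref{induced} is deduced from Lemma~\ref{submodules}, to obtain an isomorphism of graded $\HL[n+1]$-modules
$$\iInd S_{\bmu'}\<\defect\beta+\deg\tmu\>\cong e_{i,n}\,\zbmu^\star\,e_{i,n}\,\HL[n+1].$$
The crux is then the dual of Proposition~\ref{dominance}: that $e_{i,n}\zbmu^\star e_{i,n}\HL[n+1]$ has basis $\set{e_{i,n}y_\bmu\,\psi'_{\s_k\t}|\t\in\Std(\balpha_k)\text{ for }1\le k\le z}$, where $\s_k$ is the unique standard $\balpha_k$-tableau with $(\s_k)_n=\tlmup$. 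I would prove this by mirroring the proof of Proposition~\ref{dominance}, using the identity $\zbmu^\star=\psi_{\tmu\tmu}\psi'_{\tlmup\tmup}$ (obtained by applying $\star$ to $\zbmu=\psi'_{\tmup\tlmup}\psi_{\tmu\tmu}$) and treating its two factors $\psi_{\tmu\tmu}=e_\bmu y_\bmu$ and $\psi'_{\tlmup\tmup}$ by the analogues of the methods used there for $y'_\bump$ and $\psi_{\tlmu\tmu}$: one expands $e_{i,n}\zbmu^\star e_{i,n}h$ (for $h\in\HL[n+1]$) using Theorem~\ref{psi mn} in the $n\leftrightarrow\psi'$ direction, Corollary~\ref{graded tilting} and Lemma~\ref{technical2} into a linear combination of terms $e_{i,n}y_\bmu\psi'_{\s\t}$ with $\res_\s(n+1)=i$ and $\Shape(\s_n)\gedom\bump$, and then the $\star$-form of Corollary~\ref{psi strong} (namely $\psi_{\u\v}\psi'_{\s\t}\ne0$ only if $\s'\gedom\v$) forces $\tlmup\gedom\s_n$; combined with $\Shape(\s_n)\gedom\bump$ this pins $\s_n$ down to $\tlmup$, so $\s=\s_k$ and $\Shape(\s)=\balpha_k$ for some $k$. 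Comparing with the dimension count of the first paragraph then gives the basis.

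With the basis established, the filtration is built as in the proof of the Main Theorem but with the chain reversed, since now $\balpha_z\gdom\dots\gdom\balpha_1$: put
$$J_k=\set{e_{i,n}y_\bmu\psi'_{\s_{k'}\t}|\t\in\Std(\balpha_{k'})\text{ for }k\le k'\le z},$$
so that $0=J_{z+1}\subset J_z\subset\dots\subset J_1=e_{i,n}\zbmu^\star e_{i,n}\HL[n+1]$; each $J_k$ is an $\HL[n+1]$-submodule by Theorem~\ref{psi bases} and the dual of Proposition~\ref{dominance}, and $\psi'_\t\mapsto e_{i,n}y_\bmu\psi'_{\s_k\t}+J_{k+1}$ is an isomorphism $S_{\balpha_k}\<c_k\>\cong J_k/J_{k+1}$ for a grading shift $c_k$ read off from the degrees of $y_\bmu$ and $\psi'_{\s_k\t}$; tracking these and using \cite[Lemma~3.12]{BKW:GradedSpecht} and its conjugate to rewrite the defect identifies $c_k$ with $d^{A_k}(\bmu)$, and unwinding the shift of Lemma~\ref{induced} then finishes the proof. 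I expect the main obstacle to be the same as for the Main Theorem, namely the dual of Proposition~\ref{dominance}: the identity $m_{\s\t}=m_{\s^+\t^+}$ for the natural ungraded embedding, which is used there, does not dualize verbatim (since $u^-_\bmu$, unlike $u^+_\bmu$, is sensitive to the lowest addable node), so one must instead control the embedded elements $n_{\u\v}$ through the relation $n_{\u\v}(L_{n+1}-Q_1)=n_{\u^+\v^+}$ and the $n$-version of Corollary~\ref{L_k action}; and one must compute the grading shift carefully enough to land precisely on $d^{A_k}(\bmu)$. Alternatively, the whole statement can be deduced directly from the Main Theorem by applying the contragredient duality $\circledast$, using $S^\bmu\cong S_\bump^\circledast\<\defect\beta\>$ together with the fact that $\circledast$ commutes with $\iInd$ up to a grading shift (a consequence of the graded Frobenius extension $\HL\hookrightarrow\HL[n+1]$); the only real work is then the identification of that shift.
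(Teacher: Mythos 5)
Your proposal is correct and takes essentially the same approach as the paper: the paper's own proof of this corollary is just a remark offering precisely your two routes --- repeating the Main Theorem's argument starting from $S_{\bmu'}\langle\defect\beta+\deg\tmu\rangle\cong\zbmu^{\star}\HL$ (your main line), or deducing the result from the Main Theorem via $S^\bmu\cong S_{\bump}^{\circledast}\langle\defect\beta\rangle$ (your alternative) --- with all details left to the reader. Your filling-in of those details, in particular replacing the identity $m_{\u\v}=m_{\u^+\v^+}$ by $n_{\u^+\v^+}=n_{\u\v}(L_{n+1}-Q_1)$ when dualizing Proposition~\ref{dominance}, is consistent with what the paper intends.
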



\end{document}